%2multibyte Version: 5.50.0.2953 CodePage: 1254
\documentclass{article}%
\usepackage{graphicx}
\usepackage{amsmath}
\usepackage{amsfonts}
\usepackage{amssymb}
\usepackage{setspace}
\usepackage[hmargin=3.5cm,vmargin=3.5cm]{geometry}%
\setcounter{MaxMatrixCols}{30}
%TCIDATA{OutputFilter=latex2.dll}
%TCIDATA{Version=5.50.0.2953}
%TCIDATA{Codepage=1254}
%TCIDATA{CSTFile=LaTeX article (bright).cst}
%TCIDATA{Created=Tue Apr 06 14:54:02 2004}
%TCIDATA{LastRevised=Monday, October 16, 2017 18:13:17}
%TCIDATA{<META NAME="GraphicsSave" CONTENT="32">}
%TCIDATA{<META NAME="SaveForMode" CONTENT="1">}
%TCIDATA{BibliographyScheme=Manual}
%TCIDATA{<META NAME="DocumentShell" CONTENT="General\Blank Document">}
%TCIDATA{Language=American English}
%TCIDATA{PageSetup=72,57,72,72,0}
%BeginMSIPreambleData
\providecommand{\U}[1]{\protect\rule{.1in}{.1in}}
%EndMSIPreambleData
\providecommand{\U}[1]{\protect\rule{.1in}{.1in}}
\providecommand{\U}[1]{\protect\rule{.1in}{.1in}}
\providecommand{\U}[1]{\protect\rule{.1in}{.1in}}
\providecommand{\U}[1]{\protect\rule{.1in}{.1in}}
\providecommand{\U}[1]{\protect\rule{.1in}{.1in}}
\providecommand{\U}[1]{\protect\rule{.1in}{.1in}}
\providecommand{\U}[1]{\protect\rule{.1in}{.1in}}
\providecommand{\U}[1]{\protect\rule{.1in}{.1in}}
\providecommand{\U}[1]{\protect\rule{.1in}{.1in}}
\providecommand{\U}[1]{\protect\rule{.1in}{.1in}}
\providecommand{\U}[1]{\protect\rule{.1in}{.1in}}
\providecommand{\U}[1]{\protect\rule{.1in}{.1in}}
\providecommand{\U}[1]{\protect\rule{.1in}{.1in}}
\providecommand{\U}[1]{\protect\rule{.1in}{.1in}}
\providecommand{\U}[1]{\protect\rule{.1in}{.1in}}
\providecommand{\U}[1]{\protect\rule{.1in}{.1in}}
\providecommand{\U}[1]{\protect\rule{.1in}{.1in}}
\providecommand{\U}[1]{\protect\rule{.1in}{.1in}}
\providecommand{\U}[1]{\protect\rule{.1in}{.1in}}
\providecommand{\U}[1]{\protect\rule{.1in}{.1in}}
\providecommand{\U}[1]{\protect\rule{.1in}{.1in}}
\providecommand{\U}[1]{\protect\rule{.1in}{.1in}}
\providecommand{\U}[1]{\protect\rule{.1in}{.1in}}
\providecommand{\U}[1]{\protect\rule{.1in}{.1in}}
\providecommand{\U}[1]{\protect\rule{.1in}{.1in}}
\providecommand{\U}[1]{\protect\rule{.1in}{.1in}}
\providecommand{\U}[1]{\protect\rule{.1in}{.1in}}
\providecommand{\U}[1]{\protect\rule{.1in}{.1in}}
\providecommand{\U}[1]{\protect\rule{.1in}{.1in}}
\providecommand{\U}[1]{\protect\rule{.1in}{.1in}}
\providecommand{\U}[1]{\protect\rule{.1in}{.1in}}
\providecommand{\U}[1]{\protect\rule{.1in}{.1in}}
\providecommand{\U}[1]{\protect\rule{.1in}{.1in}}
\providecommand{\U}[1]{\protect\rule{.1in}{.1in}}
\providecommand{\U}[1]{\protect\rule{.1in}{.1in}}
\providecommand{\U}[1]{\protect\rule{.1in}{.1in}}
\providecommand{\U}[1]{\protect\rule{.1in}{.1in}}

\setcounter{page}{1}
\setlength{\textheight}{21.6cm}
\setlength{\textwidth}{14cm}
\setlength{\oddsidemargin}{1cm}
\setlength{\evensidemargin}{1cm}
\pagestyle{myheadings}
\thispagestyle{empty}
\newtheorem{theorem}{Theorem}
{}

\newtheorem{corollary}{Corollary}

\newtheorem{definition}{Definition}

\newtheorem{notation}{Notation}

\newtheorem{proposition}{Proposition}
\newtheorem{remark}{Remark}

\newtheorem{summary}{Summary}
\newenvironment{proof}[1][Proof]{\textbf{#1.} }{\ \rule{0.5em}{0.5em}}

\oddsidemargin 1.0cm \evensidemargin 1.0cm
\voffset -1cm
\topmargin 0.1cm
\headheight 0.5cm
\headsep 1.5cm
\begin{document}

\title{The Spectrum of the Hamiltonian with a PT-symmetric Periodic Optical Potential}
\author{O. A. Veliev\\{\small Depart. of Math., Dogus University, Ac\i badem, Kadik\"{o}y, \ }\\{\small Istanbul, Turkey.}\ {\small e-mail: oveliev@dogus.edu.tr}}
\date{}
\maketitle

\begin{abstract}
We give a complete description, provided with a mathematical proof, of the
shape of the spectrum of the Hill operator with potential $4\cos^{2}%
x+4iV\sin2x$, where $V\in(0,\infty).$ We prove that the second critical point
$V_{2}$, after which the real parts of the first and second band disappear, is
a number between $0.8884370025$ and $0.8884370117.$ Moreover we prove that
$V_{2}$ is the degeneration point for the first periodic eigenvalue. Besides,
we give a scheme by which one can find arbitrary precise value of the second
critical point as well as the $k$-th critical points after which the real
parts of the $(2k-3)$-th and $\left(  2k-2\right)  $-th bands disappear, where
$k=3,4,...$

Key Words: PT-symmetric operators, optical potentials, band structure.

AMS Mathematics Subject Classification: 34L05, 34L20.

\end{abstract}

\section{Introduction and Preliminary Facts}

In this paper we investigate the one dimensional Schr\"{o}dinger operator
$L(q)$ generated in $L_{2}(-\infty,\infty)$ by the differential expression
\begin{equation}
-y^{^{\prime\prime}}(x)+q(x)y(x),
\end{equation}
where $q(x)=:4\cos^{2}x+4iV\sin2x$ is an optical potential which is the shift
of%
\begin{equation}
(1+2V)e^{i2x}+(1-2V)e^{-i2x},\text{ }V\geq0.
\end{equation}

Some physically interesting results have been obtained by considering the
potential (2). The detailed investigation of the periodic optical potentials
in the papers [7, 8] were illustrated on (2).\ For the first time, the
mathematical explanation of the nonreality of the spectrum of $L(q)$ for
$V>0.5$ and finding the threshold $0.5$ (first critical point $V_{1}$) was
done by Makris et al [7,8]. Moreover, for $V=0,85$ they sketch the real and
imaginary parts of the first two bands by using the numerical methods. In [11]
Midya et al reduce the operator $L(q)$ with potential (2) to the Mathieu
operator and using the tabular values establish that there is second critical
point $V_{2}\sim0.888437$ after which no part of the first and second bands
remains real.

In this paper we give a complete description, provided with a mathematical
proof, of the shape of the spectrum of the Hill operator $L(q)$ with potential
(2), when $V$ changes from $1/2$ to $\sqrt{5}/2.$ We prove that the second
critical point $V_{2}$ is a number between $0.8884370025$ and $0.8884370117$.
Moreover we prove that $V_{2}$ is the unique degeneration point for the first
periodic eigenvalue, in the sense that the first periodic eigenvalue of the
potential (2) is simple for all $V\in(1/2,\sqrt{5}/2)\backslash\left\{
V_{2}\right\}  $ and is double if $V=V_{2}.$ Our approach give the possibility
to find the arbitrary close values of the $k$-th critical point $V_{k}$ and
prove that no part of the $\left(  2k-3\right)  $-th and $\left(  2k-2\right)
$-th bands remains real for $V_{k}<V<V_{k}+\varepsilon$ for some positive
$\varepsilon,$ where $k=2,3,...$

For the proofs we use the following results formulated below as summaries.

\begin{summary}
$(a)$ The spectrum $\sigma(L(q))$ of $L(q)$ is the union of the spectra
$\sigma(L_{t}(q))$ of the operators $L_{t}(q)$ for $t\in(-\pi,\pi]$ generated
in $L_{2}[0,\pi]$ by (1) and the boundary conditions
\[
y(\pi)=e^{it}y(0),\text{ }y^{^{\prime}}(\pi)=e^{it}y^{^{\prime}}(0),
\]
where $t$ is the quasimomentum.

$(b)$ The spectrum of $\sigma(L_{t}(q))$ consists of the Bloch eigenvalues
$\mu_{1}(t),$ $\mu_{2}(t),...,$ that are the roots of the characteristic
equation
\begin{equation}
F(\lambda)=2\cos t,
\end{equation}
where $F(\lambda):=\varphi^{\prime}(\pi,\lambda)+\theta(\pi,\lambda)$ is the
Hill discriminant, $\theta$ and $\varphi$ are the solutions of
\begin{equation}
-y^{^{\prime\prime}}(x)+q(x)y(x)=\lambda y(x)
\end{equation}
satisfying the initial conditions $\theta(0,\lambda)=\varphi^{\prime
}(0,\lambda)=1,$ $\theta^{\prime}(0,\lambda)=\varphi(0,\lambda)=0.$

$(c)$ $\lambda\in\sigma(L(q))$ if and only if $F(\lambda)\in\lbrack$\ $-2,2]$.

$(d)$ The spectrum $\sigma(L)$ consists of the analytic arcs defined by (3)
whose endpoints are the eigenvalues of $L_{t}$ for $t=0,\pi$ and the multiple
eigenvalues of $L_{t}$ for $t\in(0,\pi).$ Moreover $\sigma(L)$ does not
contain the closed curves, that is, the resolvent set $\mathbb{C}%
\backslash\sigma(L)$ is connected.

$(e)$ If the potential $q$ is PT-symmetric, that is, $q(-x)=\overline{q(x)},$
then the following implications hold:
\begin{equation}
\lambda\in\sigma(L_{t}(q))\Longrightarrow\overline{\lambda}\in\sigma
(L_{t}(q))\text{ }\And\lambda\in\mathbb{R}\Longrightarrow F(\lambda
)\in\mathbb{R}.
\end{equation}

For $(a)-(c)$ see [3,9,10,13], for $(d)$ see [13], for $(e)$ see [7] and [14].
For the properties of the general PT-symmetric potentials see [1, 12 and
references of them]. Here we only note that the investigations of PT-symmetric
periodic potentials were begun by Bender et al [2].
\end{summary}

As we noted above in [11] it was proved that the investigation of the operator
$L(q)$ with potential (2) can be reduced to the investigation of the Mathieu
operator. \ Besides in [15] (see Theorem 1 and (26) of [15]) we proved that if
$ab=cd$, where $a,b,c,$ and $d$ are arbitrary complex numbers, then the
operators $L(q)$ and $L(p)$ with potentials $q(x)=ae^{-i2x}+be^{i2x}$ and
$p(x)=ce^{-i2x}+de^{i2x}$ have the same Hill discriminant $F(\lambda)$ and
hence the same Bloch eigenvalues and spectrum. Therefore we have
\begin{equation}
\sigma(L(V))=\sigma(H(a)),\text{ }\sigma(L_{t}(V))=\sigma(H_{t}(a)),\text{
}\forall t\in\lbrack0,\pi],\text{ }a=\sqrt{1-4V^{2}},
\end{equation}
where, for brevity of notations, the operators $L_{t}(q)$ ($L(q))$ with
potentials (2) and
\begin{equation}
q(x)=ae^{-i2x}+ae^{i2x}=2a\cos2x
\end{equation}
are denoted by $L_{t}(V)$ ($L(V)$) and $H_{t}(a)$ ($H(a))$ respectively.

\begin{remark}
The potentials (2) and (7) are PT-symmetric and even functions respectively.
The equalities in (6) show that to consider the spectrum we can use the
properties of both cases. Namely, we use the properties (5) of the
PT-symmetric potential (2) and the following properties of the even potential (7):

If $a\neq0,$ then the geometric multiplicity of of the eigenvalues of the
operators $H_{0}(a)$, $H_{\pi}(a),$ $D(a)$ and $N(a),$ called as periodic,
antiperiodic, Dirichlet and Neumann eigenvalues, is $1$ and the following
equalities hold
\begin{equation}
\sigma(D(a))\cap\sigma(N(a))=\varnothing,\text{ }\sigma(H_{0}(a))\cup
\sigma(H_{\pi}(a))=\sigma(D(a))\cup\sigma(N(a)),
\end{equation}
where $D(a)$ and $N(a)$ denote the operators generated in $L_{2}[0,\pi]$ by
(1) with potential (7) and Dirichlet and Neumann boundary conditions
respectively (see [6, 16]).

A great number of papers are devoted to the Mathieu operator $H(a).$ Here we
recall only the classical results and the results of [16] about Mathieu
operator $H(a)$ which are essentially used in this paper (see summaries 2 and
3). Moreover, in this paper we use the notations of $H(a)$. Thai is why, we
proof the statements for $H(a)$ and by (6) they continue to hold for $L(V)$ if
$a=\sqrt{1-4V^{2}}.$
\end{remark}

It is well-known that if $a$ is a real nonzero number (see [3, 6]) then all
eigenvalues of $H_{t}(a)$ for all $t\in\lbrack0,2\pi)$ are real and simple and
all gaps in the spectrum of $H(a)$ are open. These results can be stated more
precisely as follows.

\begin{summary}
Let $0<a<\infty.$ Then all eigenvalues of $H_{t}(a)$ for all $t\in
\lbrack0,2\pi)$ are real and simple and the spectrum of $H(a)$ consists of the
real intervals
\[
\Gamma_{1}=:[\lambda_{0}(a),\lambda_{1}^{-}(a)],\text{ }\Gamma_{2}%
=:[\lambda_{1}^{+}(a),\lambda_{2}^{-}(a)],\text{ }\Gamma_{3}=:[\lambda_{2}%
^{+}(a),\lambda_{3}^{-}(a)],\text{ }\Gamma_{4}=:[\lambda_{3}^{+}%
(a),\lambda_{4}^{-}(a)],...,
\]
where $\lambda_{0}(a)$, $\lambda_{2n}^{-}(a)$, $\lambda_{2n}^{+}(a)$ for
$n=1,2,...$ are the eigenvalues of $H_{0}(a)$ and $\lambda_{2n+1}^{-}(a)$,
$\lambda_{2n+1}^{+}(a)$ for $n=0,1,...$ are the eigenvalues of $H_{\pi}(a)$
and the following inequalities hold \
\[
\lambda_{0}(a)<\lambda_{1}^{-}(a)<\lambda_{1}^{+}(a)<\text{ }\lambda_{2}%
^{-}(a)<\lambda_{2}^{+}(a)<\text{ }\lambda_{3}^{-}(a)<\lambda_{3}%
^{+}(a)<,....
\]
The bands $\Gamma_{1},$ $\Gamma_{1},...$ of the spectrum $\sigma(H(a))$ are
separated by the gaps%
\[
\Delta_{1}=:(\lambda_{1}^{-}(a),\lambda_{1}^{+}(a)),\text{ }\Delta
_{2}=:(\lambda_{2}^{-}(a),\lambda_{2}^{+}(a)),\text{ }\Delta_{3}=:(\lambda
_{3}^{-}(a),\lambda_{3}^{+}(a)),....
\]
By the other notation $\Gamma_{n}=\left\{  \mu_{n}(t):t\in\lbrack
0,\pi]\right\}  ,$ where $\mu_{1}(t)<\mu_{2}(t)<...$ are the eigenvalues of
$H_{t}(a)$ called as Bloch eigenvalues corresponding to the quasimomentum $t.$
The Bloch eigenvalue $\mu_{n}(t)$ continuously depends on $t$ and $\mu
_{n}(-t)=\mu_{n}(t)$ (see (3)).

By (6) these statements continue to hold for $L_{t}(V)$ and $L(V)$
respectively if $0<V<1/2.$
\end{summary}

The case $V=1/2$ for the first time is considered in [4] and it was proved
that the spectrum is $[0,\infty).$ Thus we need to consider the spectrum of
$L(V)$ in the case $V>1/2$ which, by (6), is the considerations of
$\sigma(H(a))$ in the case $a=ic,$ $c>0.$ In [16] we obtained the following
results which are essentially used in this paper.

\begin{summary}
$(a)$ If $0<\left\vert a\right\vert \leq8/\sqrt{6}$, then all eigenvalues of
the operators $H_{\pi}(a)$ and $D(a)$ are simple. All eigenvalues of the
operator $H_{\pi}(a)$ lie on the union of $D_{2\left\vert a\right\vert
}\left(  (2n-1)^{2}\right)  $ for $n\in\mathbb{N}$, where $D_{r}%
(z)=\{\lambda\in\mathbb{C}:\left\vert \lambda-z\right\vert \leq r\}.$

$(b)$ If $0<\left\vert a\right\vert \leq4/3$ then all eigenvalues of
$H_{0}(a)$ are simple. All eigenvalues of $H_{0}(a)$ lie in the union of
$D_{\sqrt{2}\left\vert a\right\vert }(0),$ $D_{\left(  1+\sqrt{2}\right)
\left\vert a\right\vert }(4)$ and $D_{2\left\vert a\right\vert }((2n)^{2})$
for $n=2,3,....$

$(c)$ If $a\neq0,$ then the number $\lambda$\ is an eigenvalue of multiplicity
$s$ of $H_{0}(a)$ if and only if it is an eigenvalue of multiplicity $s$
either of $D(a)$ or $N(a)$. The statement continues to hold if $H_{0}(a)$ is
replaced by $H_{\pi}(a).$
\end{summary}

To easify the readability of this paper in Section 2 we discuss the main
results and give the brief and descriptive scheme of the proofs. Then in
sections 3-5 we give the rigorous mathematical proofs of the results. In order
to avoid eclipsing the essence by the technical details some calculations and
estimations are given in the Appendix.

\section{Discussion of the Main Results and Proofs}

In this section we describe the main results and give a brief scheme of some
proofs. Moreover, we describe the transfigurations of the spectra of $L(V)$
when $V$ changes from $0$ to $\infty$. If $V$ changes from $0$ to $\infty$
then $a=\sqrt{1-4V^{2}}$ moves from $1$ to $0$ over the real line and then
moves from $0$ to $\infty$ over the imaginary line. In the case $a\in
(0,\infty)$ the spectrum is described in Summary 2. \textbf{ }We consider in
detail the case $a\in I(2),$ that is, $1/2<V<\sqrt{5}/2,$ where
$I(c)=:\left\{  ix:x\in(0,c)\right\}  .$ The steps of the investigations are
the followings.

\textbf{Step 1. On the periodic and antiperiodic eigenvalues.} In Section 3 we
consider the periodic and antiperiodic eigenvalues.\textbf{ }The main results
are the followings.

$(a)$ If $a\in I(2),$ then all antiperiodic eigenvalues are nonreal and
simple. They consist of the numbers $\lambda_{1}^{+}(a),$ $\lambda_{3}%
^{+}(a),$ $\lambda_{5}^{+}(a),...,$ lying in the upper half plane and their
conjugates denoted by $\lambda_{1}^{-}(a),$ $\lambda_{3}^{-}(a),$ $\lambda
_{5}^{-}(a),....$ respectively.

$(b)$ If $a\in I(4/3)$, that is, if $1/2<V<5/6$ then all periodic eigenvalues
are real and simple and hence can be numbered as in the self-adjoint case:
\begin{equation}
\lambda_{0}(a)<\lambda_{2}^{-}(a)<\lambda_{2}^{+}(a)<\lambda_{4}%
^{-}(a)<\lambda_{4}^{+}(a)<....,
\end{equation}
(see Summary 2). Furthermore, the eigenvalues $\lambda_{2}^{+}(a),$
$\lambda_{4}^{-}(a),$ $\lambda_{4}^{+}(a)$ are real simple and satisfy (9) for
all $a\in I(2).$ However for $\lambda_{0}(a)$ and $\lambda_{2}^{-}(a)$ we
prove that there exists a unique number $ir\in I(2)$ such that $\lambda
_{0}(ir)=\lambda_{2}^{-}(ir),$ that is, $\lambda_{0}(ir)$ is the double
eigenvalue of $H_{0}(ir).$ Moreover, we prove that the number $\tfrac{1}%
{2}\left(  1+r^{2}\right)  ^{1/2}$ is the second critical number $V_{2}$. We
say that the number $ir,$ as well as $V_{2},$ is the degeneration point for
the first periodic eigenvalue, since we prove that the first periodic
eigenvalue of the potential (2) is simple for all $V\in(1/2,\sqrt
{5}/2)\backslash\left\{  V_{2}\right\}  $ and is double if $V=V_{2}.$ If
$0<a/i<r,$ then both $\lambda_{0}(a)$ and $\lambda_{2}^{-}(a)$ are real and
simple and if $r<a/i<2$ \ then $\lambda_{0}(a)$ and $\lambda_{2}^{-}(a)$ are
simple and nonreal and $\lambda_{0}(a)=\overline{\lambda_{2}^{-}(a)}.$ Thus if
$0<a/i<r$ or equivalently if $1/2<V<V_{2}$ then all periodic eigenvalues are
real simple and satisfy (9)

\textbf{Step 2. On the numerations of the Bloch eigenvalues and bands.}

In the self-adjoint case the Bloch eigenvalues and bands can be numerated in
increasing order, since they are real. It helps to describe all results for
the self-adjoint Hamiltonian. Since, in the non-self-adjoint case the above
listed quantities, in general, are not real we have the problems: $(i)$ how
numerate the Bloch eigenvalues and bands, $(ii)$ how describe the real and
nonreal parts of the bands in detail.

We prove that the Bloch eigenvalues corresponding to the quasimomentum $t$ can
be numbered by $\mu_{1}(t),$ $\mu_{2}(t),...$ such that $\mu_{n}(t)$
continuously depend on $t\in\lbrack0,\pi]$ and
\begin{align}
\mu_{1}(0)  &  =\lambda_{0}(a),\text{ }\mu_{2}(0)=\lambda_{2}^{-}(a),\text{
}\mu_{3}(0)=\lambda_{2}^{+}(a),\text{ }\mu_{4}(0)=\lambda_{4}^{-}(a),...,\\
\mu_{1}(\pi)  &  =\lambda_{1}^{-}(a),\text{ }\mu_{2}(\pi)=\lambda_{1}%
^{+}(a),\text{ }\mu_{3}(\pi)=\lambda_{3}^{-}(a),\text{ }\mu_{4}(\pi
)=\lambda_{3}^{+}(a),...,
\end{align}
Thus if $0<a/i<r$ or if $0<V<V_{2}$, then $\Gamma_{n}=\left\{  \mu_{n}%
(t):t\in\lbrack0,\pi]\right\}  $ is a continuous curve with periodic real
endpoint $\mu_{n}(0)$ and antiperiodic nonreal endpoint $\mu_{n}(\pi)$. We say
that $\Gamma_{n}$ is the $n$-th band of $\sigma(L).$ Then by (10) and (11) the
first (second) band is the continuous curve joining the periodic real
eigenvalue $\lambda_{0}(a)$ ( $\lambda_{2}^{-}(a))$ and the antiperiodic
nonreal eigenvalue $\lambda_{1}^{-}(a)$ ($\lambda_{1}^{+}(a))$.

\textbf{Step 3. On the shapes of the bands and components of the spectrum}.

We prove that the first and second bands have different shapes in the
following 3 cases:

Case 1: $0<a/i<r,$ Case 2: $a/i=r$ , Case 3: $r<a/i<2$ \ or equivalently:

\ Case 1: $1/2<V<V_{2},$ Case 2: $V=V_{2}$ , Case 3: $V_{2}<V<\sqrt{5}/2$ . In
other words in \ the cases 1-3 we describe the bands before, at and after the
second critical point.

Let us describe briefly the shapes of all bands and then stress the shapes of
the first and second bands. In Section 4, we prove that the spectrum of $L(V)$
or $H(a)$ in Case 1 has the following properties (\textbf{Pr. 1-Pr. 6}):

\textbf{Pr. 1. }\textit{The real part }$\sigma(H(a))\cap R$\textit{ of the
spectrum of }$H(a)$\textit{ consist of the intervals}\textbf{ }
\begin{equation}
I_{1}(a)=\left[  \lambda_{0}(a),\lambda_{2}^{-}(a)\right]  ,\text{ }%
I_{2}(a)=\left[  \lambda_{2}^{+}(a),\lambda_{4}^{-}(a)\right]  ,...,I_{n}%
(a)=\left[  \lambda_{2n-2}^{+}(a),\lambda_{2n}^{-}(a)\right]  ,...
\end{equation}

\textbf{Pr. 2.} \textit{For each }$n=1,2,...,$ \textit{the interval }$I_{n}%
$\textit{ is the real part of }$\Omega_{n}=:\Gamma_{2n-1}\cup\Gamma_{2n}.$

\textbf{Pr. 3.} \textit{The bands }$\Gamma_{2n-1}$\textit{ and }$\Gamma_{2n}%
$\textit{ have only one common point }$\Lambda_{n}(a)$\textit{ which is
interior point of }$I_{n}.$\textit{ Moreover, }$\Lambda_{n}(a)$\textit{ is a
double eigenvalue of }$L_{t_{n}}(V)$\textit{ for some }$t_{n}\in(0,\pi
)$\textit{ and a spectral singularity of }$L(V)$ \textit{and hence }%
\begin{equation}
\Gamma_{2n-1}\cap\Gamma_{2n}=\Lambda_{n}(a)=\mu_{2n-1}(t_{n})=\mu_{2n}%
(t_{n})\in\mathbb{R}\text{.}%
\end{equation}

\textbf{Pr. 4. }\textit{The real parts of the bands}$\ \Gamma_{2n-1}$\textit{
and }$\Gamma_{2n}$\textit{ are respectively the intervals}%

\begin{equation}
\lbrack\lambda_{2n-2}^{+},\Lambda_{n}]=\left\{  \mu_{2n-1}(t):t\in
\lbrack0,t_{n}]\right\}  \text{ }\And\lbrack\Lambda_{n},\lambda_{2n}%
^{-}]=\left\{  \mu_{2n}(t):t\in\lbrack0,t_{n}]\right\}
\end{equation}

\textbf{Pr. 5. }\textit{The nonreal parts of}$\ \Gamma_{2n-1}$\textit{ and
}$\Gamma_{2n}$\textit{ are respectively the analytic curves }%
\begin{equation}
\gamma_{2n-1}(a)=:\left\{  \mu_{2n-1}(t):t\in(t_{n},\pi]\right\}  \text{ }%
\And\text{ }\gamma_{2n}(a)=:\left\{  \mu_{2n}(t):t\in(t_{n},\pi]\right\}
\end{equation}
\textit{and }$\gamma_{2n}(a)=\left\{  \overline{\lambda}:\lambda\in
\gamma_{2n-1}(a)\right\}  $\textit{.}

Thus the bands $\Gamma_{2n-1}$ and $\Gamma_{2n}$ are joined by $\Lambda_{n}$
and hence they form together the connected subset of the spectrum. The
spectrum $\sigma(L(a))$ consist of the connected sets $\Omega_{1}=:\Gamma
_{1}\cup\Gamma_{2},$ $\Omega_{2}=:\Gamma_{3}\cup\Gamma_{4},...$Moreover in
Case 1 we prove that

\textbf{Pr. 6 }\textit{The sets }$\Omega_{1},$\textit{ }$\Omega_{2}%
,...$\textit{ are connected separated subset of} $\sigma(H(a))$.

By the last property $\Omega_{1},$\textit{ }$\Omega_{2},...$ are components of
the spectrum.

In Case 1, by \textbf{Pr. 2 }the real part of the first component $\Omega
_{1}=\Gamma_{1}\cup\Gamma_{2}$ is the closed interval $I_{1}=:\left[
\lambda_{0}(a),\lambda_{2}^{-}(a)\right]  .$ We prove that if $V$ approaches
$V_{2}$ from the left ,that is, if $a$ approaches to $ir$ from below then the
eigenvalues $\lambda_{0}(a)$ and $\lambda_{2}^{-}(a)$ get close to each other
and the length of the interval $I_{1}$ approaches zero. As a result if
$V=V_{2},$ that is, if $a=ir,$ then we get the equality $\lambda
_{0}(a)=\lambda_{2}^{-}(a)$ which means that the first and second bands
$\Gamma_{1}$ and $\Gamma_{2}$ have only one real point which is their common
point $\lambda_{0}(a)=\lambda_{2}^{-}(a)=I_{1}=\operatorname{Re}\Omega_{1}.$
Thus, in Case 2 the real parts of $\Gamma_{1}$ and $\Gamma_{2}$ is a point
$\lambda_{0}(a).$ The other parts of the bands $\Gamma_{1}$ and $\Gamma_{2}$
are \ nonreal and symmetric with respect to the real line. Then we prove that
if Case 3 occurs, then the eigenvalues $\lambda_{0}(a)$ and $\lambda_{2}%
^{-}(a)$ get off the real line and hence $I_{1}$ becomes the empty set. As a
results, the first and second bands $\Gamma_{1}$ and $\Gamma_{2}$ became the
curves symmetric with respect to the real line. Moreover, in all cases the
intervals (12) are pairwise disjoint sets. Therefore they are called the real
components of $\sigma(L(V))$ if $V\in(1/2,\sqrt{5}/2).$

Thus investigations in Step 1 and Step 3 show that, the following equivalent
mathematical definitions of the second critical point are reasonable and it is
natural to call the second critical point as the degeneration point for the
first periodic eigenvalue.

\begin{definition}
A real number $V_{2}\in(1/2,\sqrt{5}/2)$ is called the second critical point
or the degeneration point for the first periodic eigenvalue if the first real
eigenvalue of $L_{0}(V_{2})$ is a double eigenvalue.
\end{definition}

\begin{definition}
A real number $V_{2}\in(1/2,\sqrt{5}/2)$ is said to be the second critical
point or the degeneration point for the first periodic eigenvalue if the first
real component of $\sigma(L(V_{2}))$ is a point.
\end{definition}

Note that in Case 2 and Case 3 the shapes of the components $\Omega_{2},$
$\Omega_{3},...$ are as in Case 1. In this way one can prove that there exists
$k$-th critical point, denoted by $V_{k},$ such that for $\frac{1}{2}%
<V<V_{k},$ $V=V_{k}$ and $V_{k}<V<V_{k}+\varepsilon$ the set $\Omega
_{k-1}=:\Gamma_{2k-3}\cup\Gamma_{2k-2}$ have the shape as $\Omega_{1}$ in Case
1, Case 2 and Case 3 respectively.

\textbf{Step 4. Finding the approximate value of }$V_{2}$. By Summary 3(c) any
periodic eigenvalue is either Dirichlet, called as periodic Dirichlet (breifly
$PD(a)$ or PD) eigenvalue or Neumann, called as periodic Neumann ($PN(a)$ or
PN) eigenvalue. Similarly antiperiodic eigenvalue is either antiperiodic
Dirichlet (AD) or antiperiodic Neumann (AN) eigenvalues. Clearly, the
eigenfunctions corresponding to PN, PD, AD and AN eigenvalues have the forms
\[
\Psi_{PN}(x)=\frac{a_{0}}{\sqrt{2}}+\sum_{k=1}^{\infty}a_{k}\cos2kx,\text{
}\Psi_{PD}(x)=\sum_{k=1}^{\infty}b_{k}\sin2kx,
\]%
\[
\Psi_{AD}(x)=\sum_{k=1}^{\infty}c_{k}\sin(2k-1)x\text{ }\And\text{ }\Psi
_{AN}(x)=\sum_{k=1}^{\infty}d_{k}\sin2kx,
\]
respectively. Substituting these functions into equation (4) with potential
(7) we obtain the following equalities for the PN, PD, AD and AN eigenvalues
respectively
\begin{equation}
\lambda a_{0}=\sqrt{2}aa_{1},\text{ }(\lambda-4)a_{1}=a\sqrt{2}a_{0}%
+aa_{2},\text{ }(\lambda-(2k)^{2})a_{k}=aa_{k-1}+aa_{k+1},
\end{equation}%
\begin{equation}
(\lambda-4)b_{1}=ab_{2},\text{ }(\lambda-(2k)^{2})b_{k}=ab_{k-1}+ab_{k+1},
\end{equation}%
\begin{equation}
(\lambda-1)c_{1}=ac_{1}+ac_{2},\text{ }(\lambda-(2k-1)^{2})c_{k}%
=ac_{k-1}+ac_{k+1},
\end{equation}%
\begin{equation}
(\lambda-1)d_{1}=-ad_{1}+ad_{2},\text{ }(\lambda-(2k-1)^{2})d_{k}%
=ad_{k-1}+ad_{k+1}%
\end{equation}
for $k=2,3,...$, where $a_{0}\neq0,$ $b_{1}\neq0,$ $c_{1}\neq0,$ $d_{1}\neq0$
(see [3] and [16]).

\ As we noted in Step 1, the second critical point is a real number $V_{2}$
such that $\lambda_{0}(a)=\lambda_{2}^{-}(a)$, where $a=\sqrt{1-4V_{2}^{2}}.$
In Section 3 we prove that $\lambda_{0}(a)$ and $\lambda_{2}^{-}(a)$\ are the
PN eigenvalues and hence satisfy (16). Therefore to find the approximate value
of $V_{2}$ we use the following definition of $V_{2}$ which is equivalent to
the definitions 1 and 2.

\begin{definition}
A real number $V_{2}\in(1/2,\sqrt{5}/2)$ is called the second critical point
if the first real $PN(a)$ eigenvalue, where $a=\sqrt{1-4V_{2}^{2}},$ is a
double eigenvalue.
\end{definition}

Iterating (16) we obtain that the first PN eigenvalue satisfies the equality
(55). To consider the value of $a$ and hence of $V$ for which the first
periodic eigenvalue becomes double eigenvalue, we look for the double root of
(55). To find approximately the double roots of (55) we use the second
approximation (60) of (55). Using Matematica 7 we calculate the roots of the
second approximation and then estimating the remainder and using the Rouche's
theorem we prove that: $(\imath)$ both $\lambda_{0}$ and $\lambda_{2}^{-}$ are
real if $V=$ $0.8884370025,$ $(ii)$ both $\lambda_{0}$ and $\lambda_{2}^{-}$
are nonreal and complex conjugate if $V=0.8884370117.$ Moreover, we prove that
there exists unique $a$ from $I(2)$ and hence unique $V$ from $1/2<V<\sqrt
{5}/2$ such that the first eigenvalue $\lambda_{0}$ is double and this value
of $V,$ that is $V_{2},$ should be between $0.8884370025$ and $0.8884370117.$

Note that instead of the second approximation using the $m$-th approximation
of (55) for $m>2$ one can get more sharper estimation of $V_{2}.$ In the same
way we can get the arbitrary approximation of the $k$-th critical point
$V_{k}$ for $k>2$ (see Remark 5).

\section{On the Periodic and Antiperiodic Eigenvalues}

To define the numerations of the eigenvalues of $H_{\pi}(a)$ and $H_{0}(a)$
first we prove the following theorem. For this we use Summary 3 and take into
account that the spectra of $H_{0}(a),H_{\pi}(a),$ $D(a)$ and $N(a)$ for $a=0$
are
\[
\{(2k)^{2}:k=0,1,...\},\{(2k+1)^{2}:k=0,1,...\},\{k^{2}:k=1,2,...\},\{k^{2}%
:k=0,1,...\}
\]
respectively. All eigenvalues of $H_{0}(0),$ except $0,$ and $H_{\pi}(0)$ are
double, while the eigenvalues of $D(0)$ and $N(0)$ are simple.

\begin{theorem}
$(a)$ The number of eigenvalues (counting multiplicity) of operator $H_{\pi
}(a)$ lying in $D_{4}((2n-1)^{2})$ is $2$ for all $a\in I(2)$ and $n=1,2,...$
. These eigenvalues are simple. They are either real numbers (first case) or
nonreal conjugate numbers (second case). One of them is AD and the other is AN eigenvalue.

$(b)$ The statements in $(a)$ continue to hold if the operator $H_{\pi}(a)$ is
replaced by $H_{0}(a)$ and the discs $D_{4}((2n-1)^{2})$ for $n=1,2,...$ are
replaced by $D_{4}((2n)^{2})$ for $n=2,3,...$

$(c)$ The operator $H_{0}(a)$ has $3$ eigenvalues (counting multiplicity) in
$D_{6}(3)$ for all $a=I(2)$. Two of them are the PN eigenvalues and the other
is the PD eigenvalue. The PN eigenvalues are simple for $a\in I(4/3)$ and the
PD eigenvalue is simple for $a\in I(2).$ Moreover, if $a\in I(4/(1+2\sqrt
{2})),$ then $D_{\sqrt{2}\left\vert a\right\vert }(0)$ contains one PN
eigenvalue and $D_{\left(  1+\sqrt{2}\right)  \left\vert a\right\vert }(4)$
contains one PN and one PD eigenvalue.
\end{theorem}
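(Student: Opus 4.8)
The plan is to establish all three parts by the same mechanism: localize the eigenvalues in discs using Summary 3, count them there via a Rouché-type argument comparing $H_t(a)$ to $H_t(0)$, and then transfer the multiplicity and type information through the decomposition in Summary 3(c) together with (8). First I would recall that for $a=0$ the antiperiodic spectrum is $\{(2n-1)^2\}$ with each value doubly degenerate (one AD and one AN), while the periodic spectrum is $\{(2n)^2\}$ with $0$ simple and the rest double. The eigenvalue equations (18)--(19) show that at $a=0$ the AD and AN eigenvalues both sit at $(2n-1)^2$; as $a$ moves along $I(2)$ they split but stay confined, by Summary 3(a), to $D_{2|a|}((2n-1)^2)\subset D_4((2n-1)^2)$ since $|a|<2$. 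So for part $(a)$ the count of exactly two eigenvalues in $D_4((2n-1)^2)$ follows from continuity of the eigenvalues in $a$ together with the localization: the two branches emanating from $(2n-1)^2$ cannot leave the disc, and no other eigenvalue can enter it because the discs $D_{2|a|}((2n-1)^2)$ are pairwise disjoint for the relevant $n$.

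For the simplicity and the real-or-conjugate dichotomy I would argue as follows. By Summary 3(c) the two eigenvalues in the disc are one AD and one AN eigenvalue, so they come from two \emph{different} boundary-value problems and are therefore distinct as long as $D(a)$ and $N(a)$ have disjoint spectra; this disjointness is exactly (8) in Remark 1, valid for $a\neq0$. Hence each is simple. For the conjugate structure I would invoke the PT-symmetry in (5): since $a=ic$ makes potential (7) satisfy $q(-x)=\overline{q(x)}$, the spectrum of $H_\pi(a)$ is symmetric under complex conjugation, so a nonreal eigenvalue must be accompanied by its conjugate. Because the two eigenvalues in $D_4((2n-1)^2)$ are the only ones there, either both are real, or they form a conjugate pair --- giving precisely the ``first case / second case'' alternative. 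Part $(b)$ is then the verbatim analogue for $H_0(a)$ using the disc $D_4((2n)^2)$ and the localization in Summary 3(b), valid for $n\geq2$ (the value $n=1$, i.e.\ the disc around $4$, is excluded precisely because the low-lying periodic eigenvalues behave differently and are treated in $(c)$).

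The genuinely delicate part is $(c)$, the low-energy cluster near $3$. Here three of the $a=0$ periodic eigenvalues, namely $0$ and the double value at $4$, must be reorganized, and I would count eigenvalues in $D_6(3)$ rather than in a single disc around a lattice point. The plan is to use Summary 3(b), which places the $H_0(a)$ eigenvalues in $D_{\sqrt2|a|}(0)\cup D_{(1+\sqrt2)|a|}(4)\cup\bigcup_{n\geq2}D_{2|a|}((2n)^2)$; for $a\in I(2)$ one checks that the first two discs lie inside $D_6(3)$ while all discs with $n\geq2$ lie outside it, so exactly $1+2=3$ eigenvalues fall in $D_6(3)$. By Summary 3(c) these split by type: using (16)--(17), the eigenvalue near $0$ is PN (the $a_0$-branch), and near $4$ there is one PN and one PD branch, matching the double eigenvalue $4$ of $H_0(0)$ that carries one symmetric ($\Psi_{PN}$) and one antisymmetric ($\Psi_{PD}$) eigenfunction. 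Simplicity of the PD eigenvalue on all of $I(2)$ follows from Summary 3(a) applied to $D(a)$, while simplicity of the PN eigenvalues only on $I(4/3)$ reflects the restricted range in Summary 3(b); the PN eigenvalues are exactly $\lambda_0(a)$ and $\lambda_2^-(a)$, whose possible coalescence for larger $a$ is the degeneration phenomenon this paper is about. The main obstacle I anticipate is verifying the precise disc inclusions and exclusions for the stated radii --- showing $D_{(1+\sqrt2)|a|}(4)\subset D_6(3)$ and $D_{2|a|}(16)\cap D_6(3)=\varnothing$ uniformly for $|a|<2$, and sharpening the PN-localization to $D_{\sqrt2|a|}(0)$ and $D_{(1+\sqrt2)|a|}(4)$ on the smaller range $I(4/(1+2\sqrt2))$; these are elementary but must be done carefully, and I would relegate the arithmetic to the Appendix as the authors indicate.
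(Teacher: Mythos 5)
Your overall mechanism (localization from Summary 3, counting by continuity/Rouch\'e from $a=0$, PT-symmetry (5) for the real-versus-conjugate dichotomy) is the same as the paper's, and those parts are sound. The genuine gap is in how you obtain simplicity and the AD/AN assignment in part $(a)$ (and hence in $(b)$, which you treat as a verbatim analogue). You write that ``by Summary 3(c) the two eigenvalues in the disc are one AD and one AN eigenvalue,'' and from this you deduce distinctness via (8) and then simplicity. But Summary 3(c) does not give that split: it only says that each antiperiodic eigenvalue is \emph{either} a Dirichlet \emph{or} a Neumann eigenvalue of the same multiplicity; a priori both eigenvalues in $D_{4}((2n-1)^{2})$ could be of Dirichlet type, in which case the disjointness (8) yields nothing. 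The ``one AD, one AN'' claim is itself the final assertion of the theorem and needs its own proof. In the paper it is obtained by running the same Riesz-projection continuity count separately for the operators $N(a)$ and $D(a)$: since $N(0)$ and $D(0)$ each have exactly one (simple) eigenvalue in $D_{4}((2n-1)^{2})$, each of $N(a)$, $D(a)$ has exactly one eigenvalue there for all $a\in I(2)$, and only then does Summary 3(c) force the two antiperiodic eigenvalues to be of different types. Your argument, as written, is circular. Note also that simplicity needs no such detour: Summary 3(a) asserts simplicity of all eigenvalues of $H_{\pi}(a)$ for $0<\left\vert a\right\vert \leq 8/\sqrt{6}$, and $8/\sqrt{6}>2$, so simplicity (hence distinctness of the two eigenvalues in the disc) is immediate; this is exactly the paper's route, after which PT-symmetry gives the dichotomy.

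A second, smaller issue concerns part $(c)$: you invoke the localization of Summary 3(b) ``for $a\in I(2)$,'' but Summary 3(b) is stated only for $0<\left\vert a\right\vert \leq 4/3$, so the count of three eigenvalues in $D_{6}(3)$ on the whole interval $I(2)$ cannot be read off from it directly. The paper instead obtains the count by the same projection-continuity argument as in $(a)$, starting from the fact that $H_{0}(0)$, $N(0)$ and $D(0)$ have respectively $3$, $2$ and $1$ eigenvalues in $D_{6}(3)$, and then applies Summary 3(c); the restriction $\left\vert a\right\vert \leq 4/3$ enters only in the simplicity claim for the PN eigenvalues, exactly as the theorem states. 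Likewise, your type assignment near $0$ and $4$ (``the $a_{0}$-branch,'' one PN and one PD branch near $4$) requires the $N(a)$/$D(a)$ counting argument to be rigorous --- the same repair as in part $(a)$; eigenvalue branches of $H_{0}(a)$ do not carry a Neumann/Dirichlet label by themselves, so the label must be tracked through the Neumann and Dirichlet problems separately.
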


\begin{proof}
$(a)$ It readily follows from Summary 3$(a)$ that the boundary of
$D_{4}((2n-1)^{2})$ lies in the resolvent sets of the operators $H_{\pi}(a)$
for all $a\in I(2)\cup\left\{  0\right\}  .$ Therefore the projection of
$H_{\pi}(a)$ defined by contour integration over the boundary of
$D_{4}((2n-1)^{2})$ depend continuously on $a.$ It implies that the number of
eigenvalues (counting the multiplicity) of $H_{\pi}(a)$ lying in
$D_{4}((2n-1)^{2})$ are the same for all $a=I(2)\cup\left\{  0\right\}  .$
Since $H_{\pi}(0)$ has two eigenvalues (counting the multiplicity) in
$D_{4}((2n-1)^{2}),$ the operators $H_{\pi}(a)$ has also $2$ eigenvalue.
Moreover if $a\neq0,$ then by Summary 3$(a)$ these eigenvalues are simple and
hence are different numbers. Therefore using (5) and taking into account that
if $\lambda$ lies in $D_{4}((2n-1)^{2}),$ then $\overline{\lambda}$ lies also
in $D_{4}((2n-1)^{2})$ and does not lie in $D_{4}((2m-1)^{2})$ for $m\neq n$,
we obtain that the eigenvalues lying in $D_{4}((2n-1)^{2})$ are either two
different real numbers or nonreal conjugate numbers. Instead of the operator
$H_{\pi}(a)$ using the operators $N(a)$ and $D(a),$ taking into account that
$N(0)$ and $D(0)$ have one eigenvalue in $D_{4}((2n-1)^{2}),$ and repeating
the above arguments we get the proof of the last statement.

$(b)$ Instead of Summary 3$(a)$ using Summary 3 $(b),$ repeating the proof of
$(a)$ and taking into account that $H_{0}(0)$ has $2$ eigenvalues in
$D_{4}((2n)^{2})$ for $n\geq2$ we get the proof of $(b).$

$(c)$ It is clear that if $a\in I(2)\cup\left\{  0\right\}  ,$ then the discs
$D_{\sqrt{2}\left\vert a\right\vert }(0)$ and $D_{\left(  1+\sqrt{2}\right)
\left\vert a\right\vert }(4)$ are contained in $D_{6}(3).$ If $a<4/(1+2\sqrt
{2}),$ then $D_{\sqrt{2}\left\vert a\right\vert }(0)$ and $D_{1+\sqrt
{2}\left\vert a\right\vert }(4)$ are pairwise disjoint discs and first disc
contains one eigenvalue of $N(0)$ and the second disc contains one eigenvalue
of $N(0)$ and one eigenvalue of $D(0).$ Finally note that in $D_{6}(3)$ there
exist respectively $3,2$ and $1$ eigenvalues of the operators $H_{0}(0),N(0)$
and $D(0).$ Therefore arguing as in the proof of $(a)$ and using Summary
3$(c)$ we get the proof of $(c).$
\end{proof}

\begin{notation}
By Theorem 1 $(a)$ and $(b)$ if $a=I(2)$, then the operators $H_{\pi}(a)$
and\ $H_{0}(a)$ have $2$ eigenvalues in $D_{4}((2n-1)^{2})$ for $n=1,2,...$and
$D_{4}((2n)^{2})$ for $n=2,3,...$respectively. Let us denote the eigenvalues
lying in $D_{4}(n^{2})$ by $\lambda_{n}^{-}(a)$ and $\lambda_{n}^{+}(a)$.
Moreover, in the first case (see Theorem 1$(a)$ for the first and second
cases) due to the indexing of Summary 2 we put $\lambda_{n}^{-}(a)<$
$\lambda_{n}^{+}(a)$. In the second case, without loss of generality, the
indexing can be done by the rule $\operatorname{Im}\lambda_{n}^{-}(a)<0$ and
$\operatorname{Im}\lambda_{n}^{+}(a)>0.$ Then $\lambda_{n}^{+}(a)=\overline
{\lambda_{n}^{-}(a)}.$ Three eigenvalues of the operator $H_{0}(a)$ lying in
$D_{6}(3)$ are denoted by $\lambda_{0}(a),$ $\lambda_{2}^{-}(a)$ and
$\lambda_{2}^{+}(a)$. Moreover $\lambda_{0}(a)$ and $\lambda_{2}^{-}(a)$
denote the PN eigenvalues and $\lambda_{2}^{+}(a)$ denotes the PD eigenvalue.
\end{notation}

Theorem 1 and Notation 1 imply the following.

\begin{corollary}
$(a)$ If $n\neq2,$ and $a\in I(2)$, then one of the following two cases occurs%
\begin{align}
\lambda_{n}^{\pm}(a)  &  \in\mathbb{R},\text{ }\lambda_{n}^{-}(a)<\lambda
_{n}^{+}(a)\text{ }\And\\
\lambda_{n}^{\pm}(a)  &  \notin\mathbb{R},\text{ }\operatorname{Im}\lambda
_{n}^{+}(a)>0,\text{ }\lambda_{n}^{-}(a)=\overline{\lambda_{n}^{+}(a)}.
\end{align}

\end{corollary}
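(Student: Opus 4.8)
The plan is to read the statement off directly from Theorem~1 and the indexing conventions of Notation~1, organizing the argument by the parity of $n$. There is essentially nothing to prove beyond correctly matching each index $n$ to the disc and the part of Theorem~1 that governs it, so I would present it as a short bookkeeping argument.

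First I would handle the odd indices. If $n$ is odd, write $n=2m-1$ with $m\in\mathbb{N}$, so that $n^{2}=(2m-1)^{2}$ and $D_{4}(n^{2})=D_{4}((2m-1)^{2})$. Theorem~1$(a)$ then applies verbatim: this disc contains exactly two eigenvalues of $H_{\pi}(a)$, both simple, which are either two distinct real numbers or a pair of nonreal complex conjugates. Next I would treat the even indices with $n\neq2$: writing $n=2m$ with $m\geq2$ gives $n^{2}=(2m)^{2}$, and Theorem~1$(b)$ applies to $D_{4}((2m)^{2})$ for $m=2,3,\ldots$, again producing exactly two simple eigenvalues of $H_{0}(a)$ that are either both real or nonreal conjugate. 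The hypothesis $n\neq2$ is exactly what makes this clean: the value $n^{2}=4$ is not among the discs of Theorem~1$(b)$ (which start at $m=2$, i.e. $n^{2}=16$) but instead falls under Theorem~1$(c)$, where $D_{6}(3)$ carries \emph{three} eigenvalues and the present two-fold dichotomy is no longer the right description. So the excluded case is excluded precisely because its geometry is different.

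In both remaining cases the two eigenvalues in $D_{4}(n^{2})$ are, by definition, the numbers $\lambda_{n}^{-}(a)$ and $\lambda_{n}^{+}(a)$ of Notation~1, and it only remains to align the labels with the two alternatives. In the real case Notation~1 fixes the order $\lambda_{n}^{-}(a)<\lambda_{n}^{+}(a)$, which is the first alternative. In the nonreal case Notation~1 indexes by $\operatorname{Im}\lambda_{n}^{-}(a)<0<\operatorname{Im}\lambda_{n}^{+}(a)$, and the relation $\lambda_{n}^{-}(a)=\overline{\lambda_{n}^{+}(a)}$ is supplied by the PT-symmetry implication (5) exactly as in the proof of Theorem~1$(a)$: the conjugate of a nonreal eigenvalue is again an eigenvalue, it lies in the same disc $D_{4}(n^{2})$ (centered at the real point $n^{2}$, hence symmetric under conjugation, and disjoint from the neighboring discs), and since there are only two eigenvalues there it must coincide with the other one. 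This yields the second alternative and finishes the argument. I do not expect any genuine obstacle here — the content is entirely carried by Theorem~1 — so the only point demanding care is the parity-to-disc matching and the justification for removing $n=2$, both of which I have isolated above.
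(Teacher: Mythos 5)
Your proposal is correct and follows essentially the same route as the paper, which dispatches this corollary with the single line that ``Theorem 1 and Notation 1 imply the following''; you have merely spelled out the parity-to-disc bookkeeping and the label alignment that the paper leaves implicit. The only redundancy is your re-derivation of the conjugacy $\lambda_{n}^{-}(a)=\overline{\lambda_{n}^{+}(a)}$ via (5), since both Theorem 1$(a)$ and Notation 1 already assert it, but this does no harm.
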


Corollary 1 implies that if $n\neq2$ and $a\in I(2)$, then either $\left(
\lambda_{n}^{+}(a)-\lambda_{n}^{-}(a)\right)  \in(0,\infty)$ or $\left(
\lambda_{n}^{+}(a)-\lambda_{n}^{-}(a)\right)  \in\left\{  ix:x\in
(0,\infty)\right\}  $, that is,
\begin{equation}
\left(  \lambda_{n}^{+}(a)-\lambda_{n}^{-}(a)\right)  \in(0,\infty
)\cup\left\{  ix:x\in(0,\infty)\right\}  ,\forall a\in I(2),\forall n\neq2.
\end{equation}

Using this we prove the following.

\begin{theorem}
Let $n\neq2.$ Then $\lambda_{n}^{-}(a)$ is real (nonreal) for all $a\in I(2)$
if and only if there exists $b\in I(2)$ such that $\lambda_{n}^{-}(b)$ is a
real (nonreal) number. The statement continues to hold if $\lambda_{n}^{-}(a)$
is replaced by $\lambda_{n}^{+}(a).$
\end{theorem}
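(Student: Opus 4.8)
The plan is to convert the statement into a connectedness dichotomy on the parameter segment $I(2)=\{ix:x\in(0,2)\}$, which is connected. The forward implication (``for all'' implies ``there exists'') is trivial, so the content is that the existence of a single $b$ with $\lambda_{n}^{-}(b)$ real (resp. nonreal) forces reality (resp. nonreality) for every $a\in I(2)$. The key device is to work not with the individually labelled eigenvalues---whose labels could a priori swap precisely at a transition, which would make the argument circular---but with the symmetric quantity
\[
h(a):=\bigl(\lambda_{n}^{+}(a)-\lambda_{n}^{-}(a)\bigr)^{2}.
\]
By Corollary 1 and (22), for every $a\in I(2)$ with $n\neq2$ the two eigenvalues in $D_{4}(n^{2})$ are distinct, and $\lambda_{n}^{+}(a)-\lambda_{n}^{-}(a)$ lies in $(0,\infty)\cup\{ix:x\in(0,\infty)\}$. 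Hence $h(a)$ is a nonzero real number, positive exactly when $\lambda_{n}^{\pm}(a)$ are real and negative exactly when they are a nonreal conjugate pair. Thus the whole statement reduces to showing that the sign of $h$ is constant on $I(2)$.

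Next I would establish that $h$ depends continuously on $a\in I(2)$. For $n\neq2$ the relevant operator is $H_{\pi}(a)$ (if $n$ is odd) or $H_{0}(a)$ (if $n$ is even, $n\geq4$), and by Summary 3$(a)$,$(b)$ the boundary $\partial D_{4}(n^{2})$ lies in the resolvent set for all $a\in I(2)\cup\{0\}$, with exactly the two eigenvalues $\lambda_{n}^{\pm}(a)$ inside. Therefore the Riesz projection $P_{n}(a)=\frac{1}{2\pi i}\oint_{\partial D_{4}(n^{2})}(H-\lambda)^{-1}\,d\lambda$ is a rank-two projection depending continuously (in fact analytically) on $a$, and the restriction of the operator to its two-dimensional range has trace $\lambda_{n}^{+}(a)+\lambda_{n}^{-}(a)$ and determinant $\lambda_{n}^{+}(a)\lambda_{n}^{-}(a)$, both continuous in $a$. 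Since
\[
h(a)=\bigl(\lambda_{n}^{+}(a)+\lambda_{n}^{-}(a)\bigr)^{2}-4\,\lambda_{n}^{+}(a)\lambda_{n}^{-}(a),
\]
the function $h$ is continuous on $I(2)$. This symmetric-function passage is precisely what removes the labelling ambiguity, since $h$ is insensitive to which of the two eigenvalues is called $\lambda_{n}^{+}$.

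Finally I would conclude by connectedness. The segment $I(2)$ is connected, $h$ is continuous, and by the first paragraph $h(I(2))\subset\mathbb{R}\setminus\{0\}$. Since $\mathbb{R}\setminus\{0\}$ splits into the two separated components $(0,\infty)$ and $(-\infty,0)$, the connected image $h(I(2))$ must lie entirely in one of them. Hence $h$ has constant sign on $I(2)$: either $h(a)>0$ for all $a$ (so every $\lambda_{n}^{-}(a)$ is real) or $h(a)<0$ for all $a$ (so every $\lambda_{n}^{-}(a)$ is nonreal). Given any $b\in I(2)$, the sign of $h(b)$ therefore determines the sign of $h$ everywhere, which is exactly the asserted equivalence. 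The statement for $\lambda_{n}^{+}(a)$ is identical, because $\lambda_{n}^{+}(a)$ is real if and only if $\lambda_{n}^{-}(a)$ is (both are real in the first case of Corollary 1 and both nonreal in the second).

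The main obstacle is the continuity claim for $h$: one must verify that no eigenvalue escapes through $\partial D_{4}(n^{2})$ and that the two eigenvalues remain simple, so that the contour integral genuinely isolates a rank-two spectral subspace varying continuously with $a$. Both points are supplied by Summary 3$(a)$,$(b)$ together with $a\neq0$; once they are in place, the argument is a clean continuity-plus-connectedness dichotomy, and the passage through the discriminant $h$ rather than the individual eigenvalues is the only genuinely delicate choice.
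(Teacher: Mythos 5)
Your proof is correct and takes essentially the same route as the paper: there, Theorem 2 is proved by noting that $\lambda_{n}^{+}(a)-\lambda_{n}^{-}(a)$ is a continuous function of $a$ whose values lie in the disconnected set $(0,\infty)\cup\{ix:x\in(0,\infty)\}$ (by Corollary 1 and the simplicity of the eigenvalues from Theorem 1), so its image over the connected segment $I(2)$ cannot meet both pieces. Your only departure is working with the squared difference $h(a)$ and justifying its continuity through the trace and determinant of the rank-two Riesz projection, which is a cleaner disposal of the $\pm$ labelling ambiguity that the paper handles more tersely by appealing directly to simplicity and the labelling conventions of Notation 1.
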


\begin{proof}
First let us prove that the set
\[
G_{n}(2)=:\left\{  \lambda_{n}^{+}(a)-\lambda_{n}^{-}(a):a\in I(2)\right\}
\]
is an subinterval of either $(0,\infty)$ or $\left\{  ix:x\in(0,\infty
)\right\}  $. Suppose to the contrary that there exist $c\in I(2)$ and $d\in
I(2)$ such that
\begin{equation}
\left(  \lambda_{n}^{+}(c)-\lambda_{n}^{-}(c)\right)  \in(0,\infty)\text{
}\And\left(  \lambda_{n}^{+}(d)-\lambda_{n}^{-}(d)\right)  \in\{ix:x\in
(0,\infty)\}.
\end{equation}
By Theorem 1, $\lambda_{n}^{+}(a)$ and $\lambda_{n}^{-}(a)$ are the simple
eigenvalues for all $a\in I(2).$ Therefore $\lambda_{n}^{+}-\lambda_{n}^{-}$
is a continuous function on $I(2).$ Thus $\left\{  \lambda_{n}^{+}%
(a)-\lambda_{n}^{-}(a):a\in\lbrack b,d]\right\}  $ is a continuous curve lying
in $(0,\infty)\cup\left\{  ix:x\in(0,\infty)\right\}  $ (see (22)) and joining
the points of $(0,\infty)$ and $\left\{  ix:x\in(0,\infty)\right\}  $ (see
(23)) which is impossible. Hence. either $G_{n}(2)\subset(0,\infty)$ or
$G_{n}(2)\subset\left\{  ix:x\in(0,\infty)\right\}  .$

Now suppose that there exists $c\in I(2)$ such that $\lambda_{n}^{-}(c)$ is a
real number. Then by Corollary 1 $\lambda_{n}^{+}(c)$ is also a real number
and $\lambda_{n}^{+}(c)-\lambda_{n}^{+}(c)\in(0,\infty)$ and hence
$G_{n}(2)\subset(0,\infty).$ It means that $\left(  \lambda_{n}^{+}%
(a)-\lambda_{n}^{-}(a)\right)  \in(0,\infty)$ for all $a\in I(2),$ that is,
both $\lambda_{n}^{+}(a)$ and $\lambda_{n}^{-}(a)$ are real numbers. In the
same way we prove the other parts of the theorem
\end{proof}

By Theorem 2 if $\lambda_{n}^{-}(a)$ is real (nonreal) for small $a$ then it
is real (nonreal) for all $a\in I(2).$

\begin{remark}
A lot of papers (see for example [5]) are devoted to the small perturbation
and asymptotic formulas when $a\rightarrow0$ (especially for real $a$) for
eigenvalues of $H_{\pi}(a)$ and\ $H_{0}(a)$ which imply that if $a$ is a small
number then $\lambda_{n}^{\pm}(a)$ is real and nonreal respectively if $n$ is
even and odd integer.

However, in order to do the paper self-contained we prove these statements in
Appendix (see Estimation 1 and Estimation 2). Namely, in Estimation 1 we prove
that%
\begin{equation}
\lambda_{1}^{+}(a)=1+a+O(a^{2}),\text{ }\lambda_{1}^{-}(a)=1-a+O(a^{2}),
\end{equation}
$\lambda_{1}^{+}(a)$ and $\lambda_{1}^{-}(a)$ are AD and AN eigenvalues
respectively. Then using Theorem 2 we prove that (see Proposition 2)
$\lambda_{2n-1}^{\pm}(a)$ is a nonreal number if $a$ is small and pure
imaginary number. Thus if $a\in I(2)$ is small, then for all odd $n$ the case
(21) occurs.

In Estimation 2 we prove that
\begin{equation}
\lambda_{0}(a)=-\tfrac{1}{2}a^{2}+O(a^{3}),\text{ }\lambda_{2}^{-}%
(a)=4+\tfrac{5}{12}a^{2}+O(a^{3}),\text{ }\lambda_{2}^{+}(a)=4-\tfrac{1}%
{12}a^{2}+O(a^{3}),
\end{equation}
where $\lambda_{0}(a)$ and $\lambda_{2}^{-}(a)$ are PN and $\lambda_{2}%
^{+}(a)$ is PD eigenvalue for small $a$. It implies that if $a\in I(2)$ is
small, then $\lambda_{0}(a),$ $\lambda_{2}^{-}(a)$ and $\lambda_{2}^{+}(a)$
are the real numbers, $\lambda_{0}(a)<$ $\lambda_{2}^{-}(a)<\lambda_{2}%
^{+}(a)$ and this notation agree with the notations of Summary 2. Then using
Theorem 2 ( see Proposition 3) we prove that $\lambda_{2n}^{\pm}(a),$ where
$n>1,$ are real numbers if $a\in I(2)$ is a small number. Thus if $a$ is small
and pure imaginary number then for even $n$ the case (20) occurs.
\end{remark}

Now using Remark 2 and Theorem 2 we consider the reality and nonreality of the
periodic and antiperiodic eigenvalues for all $a\in I(2).$

\begin{theorem}
Let $a\in I(2).$ Then $\lambda_{2n}^{-}(a)$ and $\lambda_{2n}^{+}(a)$ for all
$n=2,3,..,$ are real and%
\begin{equation}
\lambda_{4}^{-}(a)<\lambda_{4}^{+}(a)<\lambda_{6}^{-}(a)<\lambda_{6}%
^{+}(a)<.....
\end{equation}
The eigenvalues $\lambda_{2n-1}^{-}(a)$ and $\lambda_{2n-1}^{+}(a)$ are
nonreal for all $n=1,2,....$ and
\begin{equation}
\operatorname{Im}\lambda_{2n-1}^{+}(a)>0,\text{ }\lambda_{2n-1}^{-}%
(a)=\overline{\lambda_{2n-1}^{+}(a)}.
\end{equation}

\end{theorem}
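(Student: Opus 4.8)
The plan is to combine the continuation principle of Theorem 2 with the asymptotic/small-$a$ information collected in Remark 2. The key observation is that Theorem 2 reduces a statement about reality of $\lambda_n^{\pm}(a)$ for \emph{all} $a\in I(2)$ to the same statement for a \emph{single} conveniently chosen $b\in I(2)$: since $\lambda_n^{-}(a)$ is real for all $a\in I(2)$ if and only if it is real for some $b\in I(2)$, it suffices to verify reality (or nonreality) at a small value of $a$. This is exactly what Remark 2 supplies.

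First I would treat the even-index eigenvalues. For $n\geq 2$, the expansions in (25) (Estimation 2 and Proposition 3) show that $\lambda_{2n}^{-}(a)$ and $\lambda_{2n}^{+}(a)$ are real when $a\in I(2)$ is small. Applying Theorem 2 (with $n$ replaced by $2n$, which is admissible since $2n\neq 2$ for $n\geq 2$), reality at one small $b$ forces reality for all $a\in I(2)$. Once both eigenvalues are known to be real, Corollary 1 (the first alternative (20)) gives the strict ordering $\lambda_{2n}^{-}(a)<\lambda_{2n}^{+}(a)$. To obtain the full interlacing chain (26), I would note that each pair $\lambda_{2n}^{\pm}(a)$ lies in the disc $D_4((2n)^2)$ by Theorem 1$(b)$, and these discs are pairwise disjoint for distinct $n\geq 2$ (since $(2n+2)^2-(2n)^2=8n+4>8$); hence the intervals $[\lambda_{2n}^{-}(a),\lambda_{2n}^{+}(a)]$ are separated and ordered by the ordering of their enclosing discs, which yields (26).

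Next I would treat the odd-index eigenvalues in the same spirit but with the opposite conclusion. By (24) (Estimation 1 and Proposition 2), for small pure-imaginary $a$ the antiperiodic eigenvalues $\lambda_{2n-1}^{\pm}(a)$ are nonreal. Theorem 2 (now with index $2n-1\neq 2$) propagates nonreality from this single small $b$ to all $a\in I(2)$. Nonreality then places us in the second alternative of Corollary 1, namely (21), which gives precisely $\operatorname{Im}\lambda_{2n-1}^{+}(a)>0$ and $\lambda_{2n-1}^{-}(a)=\overline{\lambda_{2n-1}^{+}(a)}$, establishing (27).

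The only genuinely substantive ingredient is the small-$a$ base case, and that work has already been pushed into the Appendix: the main obstacle is verifying the leading-order expansions (24) and (25) rigorously, i.e.\ confirming that the $O(a^2)$ and $O(a^3)$ corrections do not spoil the sign/reality pattern dictated by the leading terms. Granting Remark 2 and Proposition 2, Proposition 3, the present theorem is essentially a bookkeeping argument: Theorem 2 is the lever, Corollary 1 fixes the ordering within each disc, and the disjointness of the discs $D_4((2n)^2)$ arranges the global chain (26). I would therefore write the proof as three short paragraphs—even indices, odd indices, and the interlacing—each invoking Theorem 2 once and citing the appropriate line of Remark 2 for the base case.
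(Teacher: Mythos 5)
Your proposal is correct and follows the paper's own route: the small-$a$ base case from Estimations 1--2 (Propositions 2 and 3 of Remark 2), Theorem 2 as the continuation lever (valid since $2n\neq 2$ and $2n-1\neq 2$), and Corollary 1 together with Notation 1 for the orderings (26) and (27). Your explicit verification that the discs $D_{4}((2n)^{2})$ are pairwise disjoint merely spells out what the paper compresses into "follows from Notation 1," so the two arguments are essentially identical.
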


\begin{proof}
By Proposition 3 (see Remark 2), $\lambda_{2n}^{\pm}(a),$ where $n>1,$ are
real number if $a$ is small. Therefore it follows from Theorem 2 that
$\lambda_{2n}^{\pm}(a)$ is real for all $a\in I(2)$. Similarly, Proposition 2
and Theorem 2 imply that $\lambda_{2n-1}^{\pm}(a)$ is nonreal for all $a\in
I(2).$ The inequalities in (26) and (27) follows from Notation 1.
\end{proof}

To consider the remaining part of the periodic eigenvalue, that is, the
eigenvalues $\lambda_{2}^{+}(a),$ $\lambda_{2}^{-}(a)$ and $\lambda_{0}(a)$ we
use the following.

\begin{proposition}
Let $d$ be a positive number. If $\lambda(a)$ is a simple eigenvalue of
$H_{0}(a)$ for all $a\in I(d),$ then it is real eigenvalue for all $a\in
I(d).$
\end{proposition}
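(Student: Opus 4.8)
The plan is to reduce the statement to a connectedness argument on the real parameter $c$, where $a=ic$ with $c\in(0,d)$. Because $\lambda(a)$ is assumed \emph{simple} for every $a\in I(d)$, the branch $a\mapsto\lambda(a)$ is a well-defined continuous (indeed analytic) function on $I(d)$, by the continuity of the Riesz projection already exploited in the proof of Theorem 1. I would then introduce
\[
R=\{c\in(0,d):\lambda(ic)\in\mathbb{R}\}
\]
and show that $R$ is nonempty, closed and open in the connected interval $(0,d)$, which forces $R=(0,d)$, the desired conclusion. The engine of the whole argument is the conjugation symmetry of the spectrum: by (6) one has $\sigma(H_{0}(ic))=\sigma(L_{0}(V))$ with $V=\tfrac{1}{2}\sqrt{1+c^{2}}$, and since the potential (2) is PT-symmetric, (5) yields $\overline{\lambda}\in\sigma(H_{0}(ic))$ whenever $\lambda\in\sigma(H_{0}(ic))$.

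First I would settle $R\neq\varnothing$. For small $c$ the eigenvalue $\lambda(ic)$ lies near the spectrum of $H_{0}(0)$, all of whose periodic eigenvalues are, for small imaginary $a$, real numbers by Remark 2 (see the asymptotics (25) and Proposition 3). Hence $\lambda(ic)$ is real for all sufficiently small $c$, so $R$ contains an interval $(0,\delta)$. Closedness of $R$ in $(0,d)$ is immediate: if $c_{n}\to c_{\ast}$ with $\lambda(ic_{n})\in\mathbb{R}$, then continuity of the branch gives $\lambda(ic_{\ast})=\lim_{n}\lambda(ic_{n})\in\mathbb{R}$.

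The heart of the proof, and the step I expect to be the main obstacle, is the openness of $R$. Suppose $\lambda(ic_{0})\in\mathbb{R}$. Since $\lambda(ic_{0})$ is simple it is isolated, so one can fix $\varepsilon>0$ such that $\overline{D_{\varepsilon}(\lambda(ic_{0}))}$ contains no other eigenvalue of $H_{0}(ic_{0})$; the Riesz projection over $\partial D_{\varepsilon}(\lambda(ic_{0}))$ then has rank $1$, and by its continuity in $c$ the operator $H_{0}(ic)$ has exactly one eigenvalue (simple) in $D_{\varepsilon}(\lambda(ic_{0}))$ for every $c$ near $c_{0}$, namely $\lambda(ic)$. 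Now I invoke the conjugation symmetry: $\overline{\lambda(ic)}$ is also an eigenvalue of $H_{0}(ic)$, and since $\lambda(ic)\to\lambda(ic_{0})\in\mathbb{R}$ we have $\overline{\lambda(ic)}\to\lambda(ic_{0})$ as well, so $\overline{\lambda(ic)}\in D_{\varepsilon}(\lambda(ic_{0}))$ for $c$ close to $c_{0}$. Uniqueness of the eigenvalue in that disc forces $\overline{\lambda(ic)}=\lambda(ic)$, i.e. $\lambda(ic)\in\mathbb{R}$; thus a whole neighborhood of $c_{0}$ lies in $R$.

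Having shown $R$ is nonempty, open and closed in the connected set $(0,d)$, I conclude $R=(0,d)$, so $\lambda(a)$ is real for every $a\in I(d)$. The two delicate points are the continuity of the eigenvalue-counting (Riesz) projection, which is exactly the device already used in Theorem 1, and the anchoring at small $c$: here one genuinely needs Remark 2 rather than mere continuity to $H_{0}(0)$, because a double eigenvalue of $H_{0}(0)$ could a priori split into a complex-conjugate pair, and only the asymptotics rule this out.
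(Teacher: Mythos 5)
Your proposal is correct and is essentially the paper's own argument: the paper likewise anchors reality at small $a$ via Proposition 3, and then, at the supremum $c$ of the reality set, uses simplicity (local uniqueness of the eigenvalue in a disc $D_{\varepsilon}(\lambda(ic))$, via perturbation theory) together with the PT-symmetry implication (5) to force a contradiction with nearby nonreal values and their conjugates --- which is exactly your openness step stated in contrapositive form. The only difference is presentational: you organize it as a nonempty--open--closed connectedness argument on $(0,d)$, while the paper phrases it as a ``greatest $c$'' contradiction.
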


\begin{proof}
In Proposition 3 we prove that $\lambda(a)$ is a real for small $a\in I(2).$
Let $c$ be greatest number such that $\lambda(a)$ is real for $a\in I(c)$ and
$c<d.$ Then by assumption of the proposition $\lambda(ic)$ is a simple
eigenvalue. Therefore, by general perturbation theory, $\lambda$ is analytic
function in some neighborhood of $ic$ and there exist positive constants
$\varepsilon$ and $\delta$ such that the operator $H_{0}(a)$ has only one
eigenvalue in $D_{\varepsilon}(\lambda(ic))$ whenever $\left\vert
a-ic\right\vert <\delta.$ On the other hand, by the definition of $c$ for each
$k\in\mathbb{N}$ there exists $c_{k}\in(c,c+\frac{1}{k})$ such that
$\lambda(ic_{k})$ is nonreal. Then by (5) $\overline{\lambda(ic_{k})}$ is also
periodic eigenvalue and for large value of $k$ both $\lambda(ic_{k})$ and
$\overline{\lambda(ic_{k})}$ lie in $D_{\varepsilon}(\lambda(ic))$ and
$\left\vert a-ic_{k}\right\vert <\frac{1}{k}<\delta,$ which is a contradiction.
\end{proof}

\begin{theorem}
The eigenvalue $\lambda_{2}^{+}(a)$ are real and simple for all $a\in I(2)$
and
\begin{equation}
\lambda_{2}^{+}(a)<\lambda_{4}^{-}(a)<\lambda_{4}^{+}(a).
\end{equation}

\end{theorem}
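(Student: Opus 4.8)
The plan is to obtain the reality and simplicity of $\lambda_{2}^{+}(a)$ directly from Proposition 1, and then to read off the ordering (28) from the localization of the relevant eigenvalues in disjoint discs supplied by Theorem 1. First I would recall that, by Notation 1, $\lambda_{2}^{+}(a)$ is precisely the PD eigenvalue among the three eigenvalues of $H_{0}(a)$ lying in $D_{6}(3)$. The crucial input is the last clause of Theorem 1$(c)$: the PD eigenvalue is simple for \emph{all} $a\in I(2)$, and not merely for $a\in I(4/3)$ as in the case of the two PN eigenvalues. Hence $\lambda_{2}^{+}(a)$ is a simple eigenvalue of $H_{0}(a)$ for every $a\in I(2)$, and applying Proposition 1 with $d=2$ immediately yields that $\lambda_{2}^{+}(a)$ is real for all $a\in I(2)$.

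For the inequalities (28) I would argue by the separation of the confining discs. Since $\lambda_{2}^{+}(a)$ is real and lies in $D_{6}(3)$, we have $\lambda_{2}^{+}(a)\in D_{6}(3)\cap\mathbb{R}=[-3,9]$. On the other hand, by Theorem 1$(b)$ and Notation 1 the eigenvalues $\lambda_{4}^{-}(a)$ and $\lambda_{4}^{+}(a)$ are the two eigenvalues of $H_{0}(a)$ lying in $D_{4}(16)$, and by Theorem 3 they are real, so $\lambda_{4}^{\pm}(a)\in D_{4}(16)\cap\mathbb{R}=[12,20]$. As the intervals $[-3,9]$ and $[12,20]$ are disjoint, we obtain $\lambda_{2}^{+}(a)<\lambda_{4}^{-}(a)$. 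Finally $\lambda_{4}^{-}(a)<\lambda_{4}^{+}(a)$ is exactly the first inequality in (26) of Theorem 3, which completes the chain (28).

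I expect the only genuine subtlety to lie in the first paragraph, namely in verifying that the hypothesis of Proposition 1 holds on the whole interval $I(2)$. The reality argument of Proposition 1 is a continuation argument that would break down at any value of $a$ where the eigenvalue could collide with another one, so it is essential that simplicity of the PD eigenvalue persists throughout $I(2)$; this is exactly what distinguishes $\lambda_{2}^{+}(a)$ from $\lambda_{0}(a)$ and $\lambda_{2}^{-}(a)$, whose simplicity (and hence reality) is only guaranteed up to $a\in I(4/3)$ and which in fact degenerate at the second critical point. Everything else is a routine comparison of the real traces of the two disjoint discs $D_{6}(3)$ and $D_{4}(16)$ together with an appeal to the already-established ordering (26).
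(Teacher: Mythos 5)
Your proof is correct and takes essentially the same approach as the paper: simplicity of the PD eigenvalue $\lambda_{2}^{+}(a)$ on all of $I(2)$ (you cite the last clause of Theorem 1$(c)$, while the paper cites Summary 3$(a)$ and 3$(c)$, from which that clause was derived), followed by reality via the continuation argument of Proposition 1, and the ordering (28) from the disc localization underlying Notation 1. Your explicit comparison of $D_{6}(3)\cap\mathbb{R}=[-3,9]$ with $D_{4}(16)\cap\mathbb{R}=[12,20]$ simply spells out what the paper compresses into ``the inequalities in (28) follow from Notation 1.''
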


\begin{proof}
Since $\lambda_{2}^{+}(a)$ is PD eigenvalue (see Remark 2) by summaries
3$\left(  a\right)  $ and 3$(c),$ if $a\in I(2),$ then the eigenvalue
$\lambda_{2}^{+}(a)$ is simple. Therefore the propositions 1 and 3 imply that
it is real for all $a\in I(2).$ The inequalities in (28) follows from Notation 1.
\end{proof}

It remains to consider the eigenvalues $\lambda_{0}(a)$ and $\lambda_{2}%
^{-}(a).$ Now we find the upper bounds for their simplicities and realities.

\begin{theorem}
$(a)$ In $I(2)$ there exists a unique number $ir,$ called as the degeneration
point for the first periodic eigenvalue, such that $\lambda_{0}(ir)$ is a
double periodic eigenvalue and
\begin{equation}
\lambda_{0}(ir)=\lambda_{2}^{-}(ir).
\end{equation}

$(b)$ If $0<a/i<r,$ then both $\lambda_{0}(a)$ and $\lambda_{2}^{-}(a)$ are
real and simple
\begin{equation}
\lambda_{0}(a)<\lambda_{2}^{-}(a)<\lambda_{2}^{+}(a)
\end{equation}

$(c)$ The eigenvalues $\lambda_{0}(ir)$ and $\lambda_{2}^{-}(a)$ are the real numbers.

$(d)$ If $r<a/i<2$ \ then $\lambda_{0}(a)$ and $\lambda_{2}^{-}(a)$ are
nonreal and $\lambda_{0}(a)=\overline{\lambda_{2}^{-}(a)}.$
\end{theorem}

\begin{proof}
$(a)$ In Theorem 10 we prove that there exists a unique number $a\in I(2)$
such that $\lambda_{0}(a)=\lambda_{2}^{-}(a).$ In other word $\lambda_{0}(ir)$
is a multiple eigenvalue, where $r=a/i,$ and (29) holds. On the other hand,
there are only three periodic eigenvalues $\lambda_{0}(a)$, $\lambda_{2}%
^{-}(a)$ and $\lambda_{2}^{+}(a)$ in $D_{6}(3)$, where $\lambda_{0}(a)$ and
$\lambda_{2}^{-}(a)$ are Neimann eigenvalues and $\lambda_{2}^{+}(a)$ is a
Dirichlet eigenvalue (see Notation 1), and by the first equality of (8)
$\lambda_{0}(a)\neq\lambda_{2}^{+}(a)$ and $\lambda_{2}^{-}(a)\neq\lambda
_{2}^{+}(a)$ for all $a\in I(2).$ Therefore $\lambda_{0}(ir)$ and $\lambda
_{2}^{-}(ir)$ are the double eigenvalue, (29) holds and both $\lambda_{2}%
^{-}(a)$ and $\lambda_{0}(a)$ are simple for all $a\in I(2)\backslash\left\{
ir\right\}  .$

$(b)$ In the proof of $(a)$ we have proved that if $0<a/i<r,$ then both
$\lambda_{0}(a)$ and $\lambda_{2}^{-}(a)$ are simple eigenvalues. It with
Proposition 1 implies that both of them are real.

Now we prove the first inequality of (30). It follows from (25) that it holds
for small $a.$ Let $c$ be greatest number such that $c<r$ and the first
inequality in (30) holds for $a\in I(c)$. Then $\lambda_{0}(ic)=\lambda
_{2}^{-}(ic),$ since $\lambda_{0}(ic)$ and $\lambda_{2}^{-}(ic)$ are real
numbers and continuously depend on $c.$ It contradicts the simplicity of
$\lambda_{0}(ic).$

Let us prove the second inequality in (29). By (25) it holds for small $a$. On
the other hand $\lambda_{2}^{-}(a)\neq\lambda_{2}^{+}(a)$ for all $a,$ since
one of them is Dirichlet and the other is Neumann eigenvalue and (8) holds.
Therefore taking into account that the eigenvalues $\lambda_{2}^{+}(ic)$ and
$\lambda_{2}^{-}(ic)$ are real numbers and continuously depend on $c$ we get
the proof.

$\left(  c\right)  $ Since $\lambda_{0}(a)$ and $\lambda_{2}^{-}(a)$ are real
for all $a\in I(r),$ letting $c$ tend to $r$ from the left and taking into
account that the eigenvalues $\lambda_{0}(ic)$ and $\lambda_{2}^{-}%
(a)$\ continuously depend on $c$ we get the proof.

$(d)$ Let $R=:R(\lambda_{0})$ be the greatest positive number such that
$\lambda_{0}(a)$ is real for $a\in I(R)$. It follows from $(b)$ and $(c)$ and
Theorem 11$(b)$ that $r\leq R$ and $R^{2}<2.16.$ If $R>r$ then repeating the
proof of the Proposition 1 we obtain that $\lambda_{0}(iR)$ is a double
eigenvalue that contradict to $(a).$ Thus $R=r.$ Using the definition of
$R(\lambda_{0})$ we see that if a number $c$ lies in the small right
neighborhood of \ $R$ then $\lambda_{0}(ic)$ is nonreal. Let $d$ be largest
number from $(R,\infty)$ such that $\lambda_{0}(ic)$ is nonreal for $R<c<d.$
Suppose that $d<2.$ Using the Summary 1 $(e)$ and taking into account that
$\lambda_{2}^{+}(ic)$ is real (see Theorem 4) we conclude that $\lambda
_{2}^{-}(ic)=\overline{\lambda_{0}(ic)},$ for all $c\in(r,d).$ Now in the last
equality letting $c$ tend to $d,$ and using the continuity of $\lambda_{0}$
and $\lambda_{2}^{-}$ we get $\lambda_{2}^{-}(id)=\lambda_{0}(id)$ which
contradicts $(a).$
\end{proof}

\section{On the Bands and Components of the Spectrum}

In previous section we considered, in detail, the periodic eigenvalues that
will be used essentially in this section. The results of the theorems 3-5 can
be summarized as follows:

\begin{summary}
Let $ir$ be the degeneration point for the first periodic eigenvalue defined
in Theorem 5. Then the followings hold:

$(a)$ If $0<a/i<r,$ then all eigenvalues of $H_{0}(a)$ are real simple and (9) holds.

$(b)$ If $a=ir,$ then all eigenvalue of $H_{0}(a)$ are real and other from
$\lambda_{0}(a)$ and $\lambda_{2}^{-}(a)$ are simple and $\lambda
_{0}(a)=\lambda_{2}^{-}(a)<\lambda_{2}^{+}(a)<\lambda_{4}^{-}(a)<\lambda
_{4}^{+}(a)<....$

$(c)$ If $r<a/i<2,$ then all eigenvalue of $H_{0}(a)$ are simple and other
from $\lambda_{0}(a)$ and $\lambda_{2}^{-}(a)$ are real, $\lambda
_{0}(a)=\overline{\lambda_{2}^{-}(a)}$ and $\lambda_{2}^{+}(a)<\lambda_{4}%
^{-}(a)<\lambda_{4}^{+}(a)<....$

$(d)$ The statements $(a)-(c)$ continue to hold if $0,$ $2,$ $ir,$ $a$ and
$H_{0}(a)$ are replaced respectively by $1/2,$ $\sqrt{5}/2,$ $V_{2}=:\tfrac
{1}{2}\left(  1+r^{2}\right)  ^{1/2}$, $V=:\tfrac{1}{2}\left(  1-a^{2}\right)
^{1/2}$ and $L_{0}(V).$
\end{summary}

To investigate the bands and components of the spectrum we need to consider
all Bloch eigenvalues for all values of quasimomentum $t\in\lbrack0,\pi].$ In
[17] and [18] we obtained the following results formulated below as \ Summary
5 and Summary 6 (see (16) of [17] and Proposition 1, Remark 1 and Theorem 1 of [18]).

\begin{summary}
$(a)$ Bloch eigenvalues $\mu_{1}(t),$ $\mu_{2}(t),...$ can be numbered so that
$\mu_{n}(t)$ continuously depend on $t\in\lbrack0,\pi].$ Therefore $\Gamma
_{n}=\left\{  \mu_{n}(t):t\in\lbrack0,\pi]\right\}  $ is a continuous curve
and is called the $n$-th band of the spectrum.

$(b)$ $\mu_{n}(t)$ is a multiple eigenvalue of $L_{t}(q)$ of multiplicity $p$
if and only if $p$ bands of the spectrum have common point $\mu_{n}(t)$. In
particular, $\mu_{n}(t)$ is a simple eigenvalue if and only if it belong only
to one band $\Gamma_{n}$.

$(c)$ $\Gamma_{n}$ is a single open curve with the end points $\mu_{n}(0)$ and
$\mu_{n}(\pi)$.
\end{summary}

\begin{summary}
Let $q$ be PT-symmetric potential. Then

$(a)$ If $\mu_{n}(t_{1})$ and $\mu_{n}(t_{2})$ are real numbers, where $0\leq
t_{1}<t_{2}\leq\pi$ then

$\gamma:=\left\{  \mu_{n}(t):t\in\lbrack t_{1},t_{2}]\right\}  $ is an
interval of the real line with end points $\mu_{n}(t_{1})$ and $\mu_{n}%
(t_{2})$. In other words, if two real numbers $c_{1}<c_{2}$ belong to the band
$\Gamma_{n}$ then $[c_{1},c_{2}]\subset\Gamma_{n}.$

$(b)$ Two bands $\Gamma_{n}$ and $\Gamma_{m}$ may have at most one common point.
\end{summary}

\begin{notation}
If $0<a/i<r$, then, by Summary 4$(a)$ and Summary5 any eigenvalue in (9) is an
end point of only one component $\Gamma_{n}$ and for any component $\Gamma
_{n}$ there exists unique eigenvalue from (9) which is the end point of
$\Gamma_{n}.$ Thus there are one to one correspondence between bands of the
spectrum and the periodic eigenvalues (9). Without loss of generality, it can
be assumed that (10) holds. In other words, for all $a\in I(ir)$ we have
\begin{align}
\lambda_{0}(a)  &  =\Gamma_{1}\cap\sigma(H_{0}(a)),\text{ }\lambda_{2}%
^{-}(a)=\Gamma_{2}\cap\sigma(H_{0}(a)),\text{ }\\
\lambda_{2}^{+}(a)  &  =\Gamma_{3}\cap\sigma(H_{0}(a)),\text{ }\lambda_{4}%
^{-}(a)=\Gamma_{4}\cap\sigma(H_{0}(a)),\text{ }\lambda_{4}^{+}(a)=\Gamma
_{5}\cap\sigma(H_{0}(a)),...
\end{align}
Thus the equalities (31) and (32) constitute one to correspondence between
periodic eigenvalues and bands.

If $a=ir$ or $r<a/i<2,$ then by Summary 4$(b)$ and Summary 5 equality (32)
constitute one to correspondence between periodic eigenvalues $\lambda_{2}%
^{+}(a),$ $\lambda_{4}^{-}(a),$ $\lambda_{4}^{+}(a),...$ and bands $\Gamma
_{3},$ $\Gamma_{4},$ $\Gamma_{5},...$ If $a=ir$ then the first and second
bands $\Gamma_{1}$ and $\Gamma_{2}$ have common end point $\lambda
_{0}(a)=\lambda_{2}^{-}(a).$ If $r<a/i<2$ then the first and second bands
$\Gamma_{1}$ and $\Gamma_{2}$ are the bands whose one endpoints are nonreal
periodic eigenvalues $\lambda_{0}(a)$ and $\lambda_{2}^{-}(a)$ respectively.
Thus in any case the equalities (31) and (32) constitute one to correspondence
between periodic eigenvalues and bands.
\end{notation}

Note that Notation 2 with Summary 4 implies the followings.

\begin{remark}
If $0<a/i\leq r,$ then by Summary 4$(a)$ and Summary 4$(b)$ all periodic
eigenvalues are real, and hence by Notation 2, all bands of the spectrum have
a real part. If $r<a/i<2$, then by by Summary 4$(c)$ all periodic eigenvalues
except $\lambda_{0}(a)$ and $\lambda_{2}^{-}(a)$ and hence all bands except
may be $\Gamma_{1}$ and $\Gamma_{2}$ have a real part.
\end{remark}

To describe the shapes of the bands, in detail, we study the Hill discriminant
$F(\lambda)$ defined in Summary 1. First of all recall that the eigenvalues of
$H_{0}(a)$ and $H_{\pi}(a)$ are respectively the roots of $F(\lambda)=2$ and
$F(\lambda)=-2.$ It is well known [3, 14] that $F$ is an entire function and
\begin{equation}
F(\lambda)\in\mathbb{R},\text{ }\forall\lambda\in\mathbb{R},\lim
_{\lambda\rightarrow-\infty}F(\lambda)=\infty.
\end{equation}
Since $\sigma(H(a))=\left\{  \lambda\in\mathbb{C}:-2\leq F(\lambda
)\leq2\right\}  $ (see Summary 1$(c)$), it is clear that the real part of the
spectrum of $H(a)$ is%
\begin{equation}
\operatorname{Re}(\sigma(H(a)))=:\sigma(H(a))\cap\mathbb{R}=\left\{
\lambda\in\mathbb{R}:-2\leq F(\lambda)\leq2\right\}  .
\end{equation}
By (33), the set $G(F)=:\left\{  (\lambda,F(\lambda)):\lambda\in
\mathbb{R}\right\}  $ is a continuous curve in $\mathbb{C}$ called as a graph
of $F.$ Therefore, $\operatorname{Re}(\sigma(H))$ is the set of $\lambda
\in\mathbb{R}$ such that the graph lies in the strip

$\left\{  \lambda\in\mathbb{C}:-2\leq\operatorname{Im}\lambda\leq2\right\}  .$
Since all antiperiodic eigenvalues are nonreal $G(F)$ never intersect the line
$y=-2.$ It with (33) implies that
\begin{equation}
F(\lambda)>-2,\text{ }\forall\lambda\in\mathbb{R}.
\end{equation}
Now using (33)-(35) we prove \textbf{Pr. 1} of Section 2.

\begin{theorem}
If $0<a/i<r$, then the real part of the spectrum of $H(a)$ consist of the
intervals (12). In cases $a=ir$ and $r<a/i<2$, the real parts of $\sigma(H)$
are $\left\{  \lambda_{0}(a)\right\}  \cup I_{2}\cup I_{3}\cup...$ and
$I_{2}\cup I_{3}\cup...$ respectively.
\end{theorem}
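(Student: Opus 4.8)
The plan is to characterize the real part of the spectrum through the Hill discriminant $F$. By Summary 1$(c)$ and equation (34), a real number $\lambda$ belongs to $\sigma(H(a))$ if and only if $-2\leq F(\lambda)\leq2$. Since by (35) we already know $F(\lambda)>-2$ for all real $\lambda$, the real part of the spectrum reduces to the set $\{\lambda\in\mathbb{R}:F(\lambda)\leq2\}$. The endpoints of the intervals composing this set must be the real solutions of $F(\lambda)=2$, namely the real periodic eigenvalues. So the whole argument is about reading off, from the graph $G(F)$, exactly where the continuous curve $F$ dips below the value $2$, using our complete knowledge of which periodic eigenvalues are real and how they are ordered.

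First I would recall from (33) that $F$ is entire, real-valued on $\mathbb{R}$, and tends to $+\infty$ as $\lambda\to-\infty$. Combined with (35), the graph of $F$ on the real line stays strictly above $y=-2$ and starts high on the far left. The real periodic eigenvalues are precisely the abscissae where $F(\lambda)=2$; in Case $0<a/i<r$, by Summary 4$(a)$ these are the simple real numbers ordered as in (9): $\lambda_0<\lambda_2^-<\lambda_2^+<\lambda_4^-<\lambda_4^+<\cdots$. I would argue that between consecutive such crossings $F$ cannot touch the level $-2$ (no real antiperiodic eigenvalues, by Theorem 3) and stays on one side of $2$; by continuity and the leftward behavior, $F$ lies below $2$ exactly on the intervals $[\lambda_0,\lambda_2^-]$, $[\lambda_2^+,\lambda_4^-],\dots$ and above $2$ on the complementary open intervals $(\lambda_2^-,\lambda_2^+)$, $(\lambda_4^-,\lambda_4^+),\dots$ This yields precisely the family $I_n(a)$ in (12). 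The bookkeeping is simply matching the simple-crossing pattern of $F$ against the known ordering (9).

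For the remaining two cases I would invoke Summary 4$(b)$ and $(c)$. When $a=ir$, the eigenvalues $\lambda_0$ and $\lambda_2^-$ coincide, so the leftmost interval $[\lambda_0,\lambda_2^-]$ degenerates to the single point $\{\lambda_0(a)\}$, while the ordering of the higher real periodic eigenvalues $\lambda_2^+<\lambda_4^-<\cdots$ is unchanged, giving the intervals $I_2,I_3,\dots$ unaffected; hence the real part is $\{\lambda_0(a)\}\cup I_2\cup I_3\cup\cdots$. When $r<a/i<2$, by Summary 4$(c)$ both $\lambda_0$ and $\lambda_2^-$ leave the real line (they become complex conjugates), so $F$ no longer attains $2$ anywhere to the left of $\lambda_2^+$; since $F\to+\infty$ on the left and never reaches $2$ before $\lambda_2^+$, the set $\{\lambda\in\mathbb{R}:F(\lambda)\leq2\}$ contributes nothing there, and only $I_2\cup I_3\cup\cdots$ survives.

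The main obstacle is the first case: justifying rigorously that $F$ lies below $2$ on exactly the asserted intervals and above $2$ in between, rather than oscillating across the level $2$ in some unexpected way. The key point that rules this out is that every real periodic eigenvalue is simple (Summary 4$(a)$), so each crossing of the level $y=2$ by $G(F)$ is transversal (simple root of $F(\lambda)-2$), forcing $F$ to alternate strictly between being below and above $2$ at successive crossings. Together with $F(\lambda)\to+\infty$ as $\lambda\to-\infty$ — which fixes that $F>2$ just to the left of $\lambda_0$ — this alternation pins down the sign of $F-2$ on every interval and produces exactly the pattern in (12). I would make this alternation explicit as the crux of the argument; the rest is direct assembly from Summary 4.
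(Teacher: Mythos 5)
Your proposal follows essentially the same route as the paper's own proof: both reduce the real part of $\sigma(H(a))$ to the set $\left\{ \lambda\in\mathbb{R}:-2\leq F(\lambda)\leq2\right\} $, use the nonreality of all antiperiodic eigenvalues to get $F(\lambda)>-2$ on $\mathbb{R}$, identify the real solutions of $F(\lambda)=2$ with the real periodic eigenvalues ordered as in (9), and combine the simplicity of those eigenvalues (hence $F^{\prime}\neq0$ there, i.e.\ transversal crossings) with $F(\lambda)\rightarrow+\infty$ as $\lambda\rightarrow-\infty$ to force strict alternation of the sign of $F-2$. The paper phrases this as the graph of $F$ getting in and out of the strip at successive crossing points, which is your alternation argument in different words; for Case 1 ($0<a/i<r$) and Case 3 ($r<a/i<2$) your reasoning is complete and matches the paper's.

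In Case 2 ($a=ir$), however, there is a gap. Your key mechanism rests on simplicity of the real periodic eigenvalues, and precisely at $a=ir$ the eigenvalue $\lambda_{0}(ir)=\lambda_{2}^{-}(ir)$ is double, so transversality fails at that point; saying that the interval $[\lambda_{0},\lambda_{2}^{-}]$ ``degenerates to the single point'' states the correct answer but does not prove it. A priori, if $F-2$ changed sign at $\lambda_{0}(ir)$, the real part of the spectrum would contain the whole interval $[\lambda_{0}(ir),\lambda_{2}^{+}(ir)]$ rather than the single point $\left\{ \lambda_{0}(ir)\right\} $, and nothing you have cited excludes this. What rules it out is that the multiplicity of $\lambda_{0}(ir)$ as a root of $F(\lambda)-2$ equals its multiplicity as a periodic eigenvalue, namely two; a root of even order produces no sign change, so $F>2$ persists on both sides of $\lambda_{0}(ir)$, and since there is no further real solution of $F=2$ before $\lambda_{2}^{+}(ir)$, the contribution near $\lambda_{0}(ir)$ is exactly one point. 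This is the content of the paper's remark that the line $y=2$ is tangent to the graph of $F$ at $(\lambda_{0}(a),2)$ and that the graph does not enter the strip at that point; your write-up needs this short additional argument to close Case 2.
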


\begin{proof}
First we prove the theorem for $0<a/i<r$. Since all periodic eigenvalues (9)
are real and simple the intersection of $G(F)$ and the line $y=2$ are the
points
\begin{equation}
(\lambda_{0},2),\text{ }(\lambda_{2}^{-},2),\text{ }(\lambda_{2}^{+},2),\text{
}(\lambda_{4}^{-},2),\text{ }(\lambda_{4}^{+},2),...
\end{equation}
This with (35) implies that the graph $G(H)$ may get in and out of the strip

$\left\{  \lambda\in\mathbb{C}:-2\leq\operatorname{Im}\lambda\leq2\right\}  $
at the points (36). By (9) the leftmost intersection point of the graph $G(F)$
and the line $y=2$ is $(\lambda_{0}(a),2).$ Therefore it follows from (33)
that
\begin{equation}
\sigma(H(a))\cap(-\infty,\lambda_{0}(a))=\varnothing\And F(\lambda
)>2,\forall\lambda\in(-\infty,\lambda_{0}(a)).
\end{equation}
Since $\lambda_{0}(a)$ is a simple eigenvalue we have $F^{\prime}(\lambda
_{0}(a))\neq0.$ Then the equality $F(\lambda_{0}(a))=2$ with the inequality in
(37) implies that $F^{\prime}(\lambda_{0}(a))<0,$ that is, $F(\lambda)$
decreases in some neighborhood of $\lambda_{0}(a)$, and hence $F(\lambda)<2,$
on some right neighborhood of $\lambda_{0}(a).$ Thus the graph $G(F)$ get in
of the strip $\left\{  \lambda\in\mathbb{C}:-2\leq\operatorname{Im}\lambda
\leq2\right\}  $ for the first time at the point $(\lambda_{0},2).$ Using this
and taking into account that the second intersection point of the graph $G(F)$
and the line $y=2$ is $(\lambda_{2}^{-}(a),2)$ we see that $F(\lambda)\leq2$
for all $\lambda$ from the interval $\left[  \lambda_{0}(a),\lambda_{2}%
^{-}(a)\right]  .$ This with (35) implies that
\begin{equation}
-2<F(\lambda)<2
\end{equation}
for all $\lambda\in\lbrack\lambda_{0}(a),\lambda_{2}^{-}(a))$ and
$I_{1}\subset\sigma(H(a))$.

Now let us prove that the interval $(\lambda_{2}^{-}(a),\lambda_{2}^{+}(a))$
has no common point with the spectrum. Since $\lambda_{2}^{-}(a)$ is a simple
eigenvalue we have $F^{\prime}(\lambda_{2}^{-}(a))\neq0.$ On the other hand,
(38) with $F(\lambda_{2}^{-}(a))=2$ shows that $F(\lambda)$ increases, that
is, $F(\lambda)>2$ in some right neighborhood of $\lambda_{2}^{-}(a)$. Thus
the graph $G(F)$ goes out the strip $\left\{  \lambda\in\mathbb{C}%
:-2\leq\operatorname{Im}\lambda\leq2\right\}  $ for the first time at point
$(\lambda_{2}^{-}(a),2)$ and come back to the strip at $(\lambda_{2}%
^{+}(a),2),$ since the last is the third intersection point of $G(F)$ and the
line $y=2$ (see (36)) and $F^{\prime}(\lambda_{2}^{+}(a))\neq0.$ Thus we have
proved that $(\lambda_{2}^{-}(a),\lambda_{2}^{+}(a))\cap\sigma
(H(a))=\varnothing.$ Repeating these proofs we see that $[\lambda_{2}%
^{+}(a),\lambda_{4}^{-}(a)]\subset\sigma(H(a))$ and $(\lambda_{4}%
^{-}(a),\lambda_{4}^{+}(a))\cap\sigma(H(a))=\varnothing.$ Continuing this
process we get the proof of the theorem for $0<a/i<r$.

To prove the theorem for $a=ir$ we repeat the above proof and take into
account that the line $y=2$ is the tangent to $G(F)$ at the the point $\left(
\lambda_{0}(a),2\right)  $ and the graph $G(H)$ does not get in of the strip
$\left\{  \lambda\in\mathbb{C}:-2\leq\operatorname{Im}\lambda\leq2\right\}  $
at this point. To prove the theorem for $r<a/i<2$ we also repeat the above
proof and take into account that the graph $G(H)$ get in and out of the strip
$\left\{  \lambda\in\mathbb{C}:-2\leq\operatorname{Im}\lambda\leq2\right\}  $
at the points $(\lambda_{2}^{+},2),(\lambda_{4}^{-},2),(\lambda_{4}%
^{+},2),...$.
\end{proof}

Now we find the real part and nonreal parts of each band, i.e., prove
\textbf{Pr. 2-Pr. 6.}

\begin{theorem}
If $0<a/i<r$, then for each $n=1,2,...$ \textbf{Pr. 2-Pr. 6} hold.
\end{theorem}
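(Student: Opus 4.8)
The plan is to prove \textbf{Pr. 2--Pr. 6} for the case $0<a/i<r$ by exploiting the graph analysis of $F$ already used in Theorem~6, together with Summary~4$(a)$, Summary~5, and the PT-symmetry properties of Summary~6. First I would establish \textbf{Pr. 2}: by Theorem~6 the real part of $\sigma(H(a))$ is exactly the union of the intervals $I_n=[\lambda_{2n-2}^+,\lambda_{2n}^-]$, and by Notation~2 the endpoints of $I_n$ are precisely the periodic eigenvalues attached to the bands $\Gamma_{2n-1}$ and $\Gamma_{2n}$; since $\mu_{2n-1}(0)=\lambda_{2n-2}^+$ and $\mu_{2n}(0)=\lambda_{2n}^-$ (see (10)) are the two real endpoints while the antiperiodic endpoints $\mu_{2n-1}(\pi),\mu_{2n}(\pi)$ are nonreal by Theorem~3, the interval $I_n$ must be the real part of $\Omega_n=\Gamma_{2n-1}\cup\Gamma_{2n}$. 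I would make this rigorous by showing that every point of $I_n$ lies on $\Omega_n$ (using Summary~6$(a)$, that a real sub-arc of any band is a genuine real interval) and that no point of $I_n$ lies on any other band (using that the bands are disjoint off their common points and that the adjacent intervals are separated by gaps).

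Next I would prove \textbf{Pr. 3}. The key point is that $\Gamma_{2n-1}$ and $\Gamma_{2n}$ must meet, because $I_n=\mathrm{Re}\,\Omega_n$ is a single nondegenerate interval whose two halves are supplied by two different bands each contributing one real endpoint. Concretely, $\Gamma_{2n-1}$ starts at the left endpoint $\lambda_{2n-2}^+$ and $\Gamma_{2n}$ starts at the right endpoint $\lambda_{2n}^-$; since the real part of $\Omega_n$ is the connected interval $I_n$ and each band is a continuous curve leaving $\mathbb{R}$ at its antiperiodic (nonreal) end, the two real sub-arcs must abut at an interior point $\Lambda_n(a)\in I_n$. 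I would argue this crossing point is a \emph{double} eigenvalue of $L_{t_n}(V)$ for some $t_n\in(0,\pi)$ by invoking Summary~5$(b)$: two bands share a point exactly when that point is a multiple eigenvalue of $L_t$. Summary~6$(b)$ guarantees at most one such common point, giving uniqueness of $\Lambda_n$, and the quasimomentum $t_n$ lies strictly between $0$ and $\pi$ because at $t=0,\pi$ the eigenvalues are the (simple) periodic/antiperiodic ones. The assertion that $\Lambda_n$ is a spectral singularity follows from its being a non-semisimple (algebraically double but geometrically simple) real point of the spectrum; I would cite the even-potential geometric-multiplicity-one facts of Remark~1 to rule out semisimplicity.

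For \textbf{Pr. 4} and \textbf{Pr. 5} I would parametrize each band by $t$. On $\Gamma_{2n-1}$, as $t$ runs from $0$ to $t_n$ the eigenvalue $\mu_{2n-1}(t)$ traces the real interval $[\lambda_{2n-2}^+,\Lambda_n]$ (this is where Summary~6$(a)$ is essential: the real values of a band form an honest interval), and for $t\in(t_n,\pi]$ it leaves the real axis and traces the analytic arc $\gamma_{2n-1}(a)$ ending at the nonreal antiperiodic eigenvalue $\mu_{2n-1}(\pi)=\lambda_{2n-1}^-$; symmetrically for $\Gamma_{2n}$. The relation $\gamma_{2n}(a)=\{\overline\lambda:\lambda\in\gamma_{2n-1}(a)\}$ I would deduce from the PT-symmetry implication (5): if $\mu(t)\in\sigma(L_t)$ then $\overline{\mu(t)}\in\sigma(L_t)$, so the two nonreal arcs emanating from the shared real point $\Lambda_n$ are complex conjugates, and matching endpoints ($\lambda_{2n-1}^-=\overline{\lambda_{2n-1}^+}$ by Theorem~3) fixes the pairing. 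Finally, \textbf{Pr. 6} (that the $\Omega_n$ are separated components) would follow from Summary~1$(d)$ (the resolvent set is connected and $\sigma(L)$ contains no closed curves), combined with the fact that the real parts $I_n$ are pairwise disjoint closed intervals separated by genuine gaps and the nonreal arcs stay within bounded discs $D_4((2m-1)^2)$ around distinct centers (Summary~3$(a)$), so distinct $\Omega_n$ cannot touch. The main obstacle I anticipate is \textbf{Pr. 3}: rigorously forcing the two bands to cross at an \emph{interior} double point, and identifying it as a spectral singularity rather than merely a point where the real parts happen to meet, requires carefully combining the connectedness of $I_n$, the continuity and endpoint data of the two bands, and the multiplicity dictionary of Summary~5$(b)$; the spectral-singularity claim in particular needs the non-diagonalizability at $\Lambda_n$, which I would extract from the geometric-multiplicity-one structure rather than from the graph of $F$ alone.
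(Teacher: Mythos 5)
Your treatment of \textbf{Pr.~2} and \textbf{Pr.~3} is essentially workable, and your route to \textbf{Pr.~3} is genuinely different from the paper's: you force the meeting point $\Lambda_n(a)$ topologically (the real parts of $\Gamma_{2n-1}$ and $\Gamma_{2n}$ are closed subintervals of $I_n$ containing its left and right endpoints, they cover the connected set $I_n$, and by Summary~6$(b)$ they can share at most one point, so they must abut at exactly one interior point), whereas the paper obtains $\Lambda_n(a)$ analytically, applying Rolle's theorem to the Hill discriminant between the two roots of $F(\lambda)=2$ and using $-2<F(\lambda)<2$ on the interior to conclude that $F^{\prime}(\Lambda_n)=0$ forces a double eigenvalue of $H_{t_n}(a)$ with $t_n\in(0,\pi)$. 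Both arguments succeed. However, two later steps have genuine gaps. In \textbf{Pr.~5} you assert that the nonreal pieces are \emph{analytic} arcs and that $\gamma_{2n}(a)=\left\{ \overline{\lambda}:\lambda\in\gamma_{2n-1}(a)\right\}$ follows from ``local conjugacy at $\Lambda_n$ plus matching endpoints.'' This skips the step the paper actually has to prove: that $\mu_{2n}(t)$ is a \emph{simple} eigenvalue for every $t\in(t_n,\pi]$. Simplicity is what gives analytic dependence on $t$, and the global conjugation is then obtained by unique analytic continuation from the local relation $\mu_{2n}(t)=\overline{\mu_{2n-1}(t)}$ near $t_n$; conjugate endpoints alone do not pin down the pairing along the whole arc, and even a continuity argument for the pairing needs to know that $\overline{\mu_{2n-1}(t)}$ has only one candidate eigenvalue to coincide with, i.e.\ simplicity. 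The paper proves simplicity by the closed-curve argument: if $\mu_{2n}(t)=\mu_m(t)$ for some $m\neq 2n-1,2n$ at a nonreal point, the spectrum would contain a continuous curve joining the real numbers $\mu_{2n}(0)$ and $\mu_m(0)$ through a nonreal point, and by Summary~1$(e)$ its complex conjugate as well, producing a closed curve in $\sigma(L)$ and contradicting Summary~1$(d)$. Nothing in your sketch replaces this.

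The second gap is in \textbf{Pr.~6}, which you base on a false localization: Summary~3$(a)$ confines the \emph{antiperiodic eigenvalues} (the spectrum of $H_{\pi}(a)$) to small discs centered at $(2n-1)^{2}$; it says nothing about the Bloch arcs $\left\{ \mu_n(t):t\in(t_n,\pi)\right\}$, which are not controlled by any disc estimate in the paper, so ``distinct $\Omega_n$ cannot touch because the arcs stay in discs around distinct centers'' is unsupported. The paper's separation proof is, once more, the closed-curve argument: a common (necessarily nonreal, since the $I_n$ are disjoint) point of $\Omega_n$ and $\Omega_m$ would create a curve in the spectrum joining two distinct real periodic eigenvalues through a nonreal point, hence a closed curve via PT-symmetry, contradicting Summary~1$(d)$. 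Finally, a smaller inaccuracy in \textbf{Pr.~3}: for the spectral-singularity claim you invoke the geometric-multiplicity-one facts of Remark~1, but those concern $H_0(a)$, $H_{\pi}(a)$, $D(a)$, $N(a)$, not $L_t$ for $t\in(0,\pi)$; the paper instead quotes Proposition~2 of [17], which states precisely that a double eigenvalue of $L_t(q)$ with $t\in(0,\pi)$ is a spectral singularity of $L(q)$. Your Jordan-block intuition is correct, but the source you lean on inside the paper does not supply it.
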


\begin{proof}
\textbf{The proof of Pr. 2.} By Theorem 6 to prove \textbf{Pr. 2} it is enough
to show that
\begin{equation}
I_{n}\cap\Gamma_{m}=\varnothing,\forall m\neq2n-1,2n
\end{equation}
If (39) is not true then by Notation 2 there exists $c\in(\lambda_{2n-2}%
^{+}(a),\lambda_{2n}^{-}(a))$ such that $c\in\Gamma_{m}.$ Then by Summary 6(a)
the interval with end points $c$ and $\mu_{m}(0)$ is subset of $\Gamma_{m}.$
It implies that either $\lambda_{2n-2}^{+}(a)$ or $\lambda_{2n}^{-}(a)$ belong
to $\Gamma_{m}$ which contradicts to Notation 2.

\textbf{The proof of Pr. 3. }By Theorem 6 we have
\begin{equation}
F(\lambda_{2n-2}^{+}(a))=F(\lambda_{2n}^{-}(a))=2\text{ }\And\text{
}-2<F(\lambda)<2,\forall\lambda\in(\lambda_{2n-2}^{+}(a),\lambda_{2n}^{-}(a)).
\end{equation}
Since $F$ is differentiable function by the Roll's theorem there exists

$\Lambda_{n}(a)\in(\lambda_{2n-2}^{+}(a),\lambda_{2n}^{-}(a))\subset I_{n}$
such that $F^{\prime}(\Lambda_{n}(a))=0$. It with the inequality in (40)
implies that $\Lambda_{n}(a)$ is a multiple eigenvalue of $H_{t_{n}}(a)$ for
some $t_{n}\in(0,\pi).$ On the other hand, by (39), $\Lambda_{n}%
(a)\notin\Gamma_{m}$ for all $m\neq2n-1,2n.$ Therefore $\Lambda_{n}(a)$ is a
double eigenvalue and by Summary 5(b) we have $\Lambda_{n}(a)=\mu_{2n-1}%
(t_{n})=\mu_{2n}(t_{n})\subset\Gamma_{2n-1}\cap\Gamma_{2n}.$ This with Summary
6(b) implies (13). By Proposition 2 of [17] the double eigenvalue $\Lambda
_{n}(a)$ of the operator $L_{t}(q)$ for $t\in(0,\pi)$ is the spectral
singularities of $L(q),$ where $q$ is an arbitrary periodic potential.

\textbf{The proof of Pr. 4. }By Notation 2 and (13) $\lambda_{2n-2}^{+}(a)$
and $\Lambda_{n}(a)$ belong to $\Gamma_{2n-1}$ and $\lambda_{2n-2}^{+}%
(a)=\mu_{2n-1}(0)\in\mathbb{R},$ $\Lambda_{n}(a)=\mu_{2n-1}(t_{n}%
)\in\mathbb{R}.$ Therefore using Summary 6(a) we obtain $[\lambda_{2n-2}%
^{+}(a),\Lambda_{n}(a)]=\left\{  \mu_{2n-1}(t):t\in\lbrack0,t_{n}]\right\}
\subset\Gamma_{2n-1}.$ Similarly $[\Lambda_{n}(a),\lambda_{2n}^{-}%
(a)]\subset\Gamma_{2n-1}.$ Now to prove \textbf{Pr. 4} it is enough to show
that the curves $\gamma_{2n-1}(a)$ and $\gamma_{2n}(a)$ defined in (15) lie in
$\mathbb{C}\backslash\mathbb{R}.$ We prove it in the proof of \textbf{Pr. 5}.

\textbf{The proof of Pr. 5. }We need to show that $\gamma_{2n-1}(a)$ and
$\gamma_{2n}(a)$ have no real points. We prove it for $\gamma_{2n}(a).$ The
proof for $\gamma_{2n-1}(a)$ is the same. Suppose to the contrary that there
exists $c\in(t_{n},\pi)$ such that $\mu_{2n}(c)$ is real. Then the interval
joining $\mu_{2n}(t_{n})$ and $\mu_{2n}(c)$ has overlapping subintervals with
either $[\lambda_{2n-2}^{+}(a),\Lambda_{n}(a)]$ or $[\Lambda_{n}%
(a),\lambda_{2n}^{-}(a)]$ which contradicts either Summary 6(b) or Summary 5
(c). Thus $\mu_{2n}(t)$ is nonreal for all $t\in(t_{n},\pi].$ Let us prove
that it is simple eigenvalue. Suppose that $\mu_{2n}(t)$ is multiple for some
$t\in(t_{n},\pi].$ Then there exists $m\neq2n$ such that $\mu_{2n}(t)=\mu
_{m}(t).$ Since $\mu_{2n}(t_{n})=\mu_{2n-1}(t_{n})$ (see (13)) by Summary 6(b)
$m\neq2n-1.$ Thus $m\neq2n-1,2n.$ Then the spectrum contains the continuous
curve $\gamma$ joining the real numbers $\mu_{2n}(0)$ and $\mu_{m}(0)$ and
passing though nonreal $\mu_{2n}(t).$ It with Summary 1 (e) implies that the
closed curve $\gamma\cup\overline{\gamma}$ is a subset of the spectrum which
contradicts Summary 1(d).

It remains to proof that $\gamma_{2n}(a)=\left\{  \overline{\lambda}%
:\lambda\in\gamma_{2n-1}(a)\right\}  $\textit{. }Since $\mu_{2n}(t_{n})$ is a
double eigenvalue, ($\mu_{2n}(t_{n})=\mu_{2n-1}(t_{n}))$ and the functions
$\mu_{2n}$ and $\mu_{2n-1}$ are continuous, it follows from Summary 1(e) that
there exists $\varepsilon>0$ such that $\mu_{2n}(t)=\overline{\mu_{2n-1}(t)}$
for $t\in(t_{n},t_{n}+\varepsilon).$ On the other hand, $\mu_{2n}(t)$ and
$\mu_{2n-1}(t)$ are simple and hence analytically depend on $t\in(t_{n},\pi].$
Therefore using the uniqueness of analytic continuation we complete the proof.

\textbf{The proof of Pr. 6. }By (13) and Summary 4(a) $\Omega_{n}$ is a
connected set. To prove the separability suppose to the contrary that there
exists $\lambda\in\left(  \Omega_{n}\cap\Omega_{m}\right)  $ for some $m\neq
n.$ Since the real parts of $\Omega_{n}$ and $\Omega_{m}$ are disjoint
intervals (see \textbf{Pr. 2} and (9)), $\lambda$ is a nonreal number. Then
repeating the proof of simplicity of $\mu_{2n}(t)$ which was done in the proof
of \textbf{Pr. 5 }we get a contradiction with Summary 1(d)
\end{proof}

Repeating the proof of Theorem 7 we get the following results for $r\leq
a/i<2$.

\begin{theorem}
If $a=ir$, then $I_{1}=\Lambda_{1}(a)=\mu_{1}(t_{1})=\mu_{2}(t_{1}%
)\in\mathbb{R}$ and all statements of Theorem 7 continue to hold. If
$r<a/i<2$, then $I_{1}=\varnothing$ and all statements of Theorem 7 continue
to hold for $n=2,3,....$
\end{theorem}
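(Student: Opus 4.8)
The plan is to adapt the proof of Theorem 7 to the two degenerate regimes by re-examining where the argument used that all periodic eigenvalues are real, simple, and satisfy the strict ordering (9). Almost the entire machinery of Theorem 7 (the analysis of the graph $G(F)$ relative to the line $y=2$, the Rolle-type production of the double eigenvalue $\Lambda_n(a)$, and the symmetry argument via Summary 1(e)) only involves the periodic eigenvalues from $\lambda_2^+(a)$ onward, together with the antiperiodic eigenvalues. By Theorem 4 and Theorem 3, the eigenvalues $\lambda_2^+(a),\lambda_4^-(a),\lambda_4^+(a),\dots$ are real and simple and satisfy the ordering (26) and (28) for \emph{all} $a\in I(2)$, not merely for $0<a/i<r$, and the odd-indexed (antiperiodic) eigenvalues are nonreal for all $a\in I(2)$. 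So I would first observe that for $n\ge 2$ none of the input hypotheses of Theorem 7 degrade as $a$ crosses $ir$: the intervals $I_n$ with $n\ge 2$, the bands $\Gamma_{2n-1},\Gamma_{2n}$ with $n\ge 2$, and the corresponding assertions \textbf{Pr. 2--Pr. 6} go through verbatim, because the only eigenvalues appearing in their proofs are $\lambda_{2n-2}^+(a)$ and $\lambda_{2n}^-(a)$ with $n\ge 2$, which are governed by Summary 4(b)(c).

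Next I would handle $n=1$ in each case separately, since this is where $\lambda_0(a)$ and $\lambda_2^-(a)$ enter and their behavior changes. For $a=ir$, by Summary 4(b) we have $\lambda_0(a)=\lambda_2^-(a)$, a \emph{double} periodic eigenvalue, so the graph $G(F)$ touches the line $y=2$ tangentially at this point without crossing (this is exactly the degenerate-critical-point picture already described in Section 2 and reused at the end of the proof of Theorem 6). Hence $I_1$ collapses to the single real point $\lambda_0(a)=\lambda_2^-(a)$. I would then argue that this common point is precisely $\Lambda_1(a)$: since $F'(\lambda_0(a))=0$ (double periodic eigenvalue), $\lambda_0(a)$ is a multiple eigenvalue of $L_{t_1}(q)$ for some $t_1$, and by Summary 5(b) it is the common point $\mu_1(t_1)=\mu_2(t_1)$ of $\Gamma_1$ and $\Gamma_2$. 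The remaining statements of \textbf{Pr. 3--Pr. 6} for $n=1$ then follow by the same symmetry and analytic-continuation arguments as in Theorem 7, now with $t_1$ equal to $0$ rather than an interior value. For $r<a/i<2$, by Summary 4(c) the eigenvalues $\lambda_0(a)$ and $\lambda_2^-(a)$ are nonreal with $\lambda_0(a)=\overline{\lambda_2^-(a)}$, so they contribute no intersection of $G(F)$ with $y=2$ on the real axis; by Theorem 6 the leftmost real entry into the strip is now at $(\lambda_2^+(a),2)$, so $I_1=\varnothing$ and the whole $n=1$ analysis is vacuous. Thus \textbf{Pr. 2--Pr. 6} need only be asserted for $n\ge 2$, which is already covered by the first paragraph.

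The main obstacle I anticipate is the $a=ir$ case, specifically verifying that the Rolle/spectral-singularity machinery of \textbf{Pr. 3} still identifies the degenerate point correctly when the double eigenvalue sits at the band \emph{endpoint} rather than in the interior. In Theorem 7 one produced $t_n\in(0,\pi)$ from a strict interior sign change of $F$; here the collapse $\lambda_0(a)=\lambda_2^-(a)$ forces $t_1=0$, so I must instead read off directly from $F'(\lambda_0(a))=0$ (guaranteed by the double periodic eigenvalue) that $\lambda_0(a)$ is a double Bloch eigenvalue at $t=0$, and then invoke Summary 5(b) and Proposition 2 of [17] to conclude it is a spectral singularity. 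I would also need to double-check that the symmetry identity $\mu_2(t)=\overline{\mu_1(t)}$ near $t=0$ propagates to all of $(t_1,\pi]=(0,\pi]$ by analyticity exactly as before; this is routine once the double eigenvalue at the endpoint is established. For $r<a/i<2$ there is essentially no obstacle, since the first band pair simply ceases to have a real part and the argument reduces to the already-proved $n\ge2$ statements.
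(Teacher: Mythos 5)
Your proposal is correct and takes essentially the same approach as the paper --- indeed the paper's entire proof of this theorem is the single sentence ``Repeating the proof of Theorem 7 we get the following results for $r\leq a/i<2$,'' and your write-up fills in exactly the details that repetition requires: the tangency of $G(F)$ to the line $y=2$ at $a=ir$ giving the double Bloch eigenvalue $\mu_{1}(0)=\mu_{2}(0)$ (so $t_{1}=0$), and $I_{1}=\varnothing$ via Theorem 6 when $r<a/i<2$. Two small caveats, both of which the paper itself also glosses over: your justification that the $n\geq2$ statements hold ``verbatim'' (because only $\lambda_{2n-2}^{+},\lambda_{2n}^{-}$ with $n\geq2$ appear) is not quite accurate, since the proofs of Pr.~2, Pr.~5 and Pr.~6 quantify over \emph{all} bands $\Gamma_{m}$ and invoke the reality of every $\mu_{m}(0)$ through Summary 6(a) and the closed-curve argument, which fails for $m=1,2$ when $r<a/i<2$ and so requires a separate conjugation-type patch; and Proposition 2 of [17], as cited in the paper, concerns double eigenvalues of $L_{t}$ only for $t\in(0,\pi)$, so it does not literally deliver the spectral-singularity claim at $t_{1}=0$ (though that claim is arguably not part of what Theorem 8 asserts).
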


The arguments of these chapter give as the following result for the operator
$L(q)$ with general PT-symmetric periodic potential $q.$

\begin{theorem}
Suppose that
\[
q\in L_{1}[0,\pi],\text{ }\int_{0}^{\pi}q(x)dx=0,\text{ }q(x+\pi)=q(x),\text{
}\overline{q(-x)}=q(x)\text{ (a.e.).}%
\]
If there exists $m>0$ such that the periodic and antiperiodic eigenvalues
$\mu_{n}(0)$ and $\mu_{n}(\pi)$ for $n>m$ are nonreal numbers then there
exists $R$ such that $[R,\infty)\subset\sigma(L(q))$ and the number of the
gaps in the real part $\operatorname{Re}(\sigma(L(q)))$ of $\sigma(L(q))$ is finite.
\end{theorem}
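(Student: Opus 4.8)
The plan is to reduce both conclusions to the behaviour of the real-valued function $F$ on the real axis, exactly as in the proof of Theorem 6, and to read them off from the fact that under the hypothesis the graph of $F$ meets the lines $y=\pm 2$ only finitely often on $\mathbb{R}$. By Summary 1$(c)$ we have $\operatorname{Re}(\sigma(L(q)))=\{\lambda\in\mathbb{R}:-2\le F(\lambda)\le 2\}$, and by Summary 1$(e)$ the PT-symmetry of $q$ forces $F(\lambda)\in\mathbb{R}$ for every real $\lambda$; with (33) this makes $G(F)=\{(\lambda,F(\lambda)):\lambda\in\mathbb{R}\}$ a continuous real curve with $F(\lambda)\to+\infty$ as $\lambda\to-\infty$. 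By the characteristic equation (3) the periodic eigenvalues $\mu_{n}(0)$ are the roots of $F(\lambda)=2$ and the antiperiodic eigenvalues $\mu_{n}(\pi)$ the roots of $F(\lambda)=-2$. Hence the real solutions of $F(\lambda)=2$ are real periodic eigenvalues, which by hypothesis lie among the finite set $\{\mu_{n}(0):n\le m\}$, and likewise the real solutions of $F(\lambda)=-2$ form a finite set; all of them are contained in some half-line $(-\infty,R_{0})$.

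First I would prove $[R,\infty)\subset\sigma(L(q))$. On $(R_{0},\infty)$ neither $F(\lambda)=2$ nor $F(\lambda)=-2$ has a solution, so by the intermediate value theorem each of $F-2$ and $F+2$ keeps a constant sign throughout $(R_{0},\infty)$. To determine these signs I would invoke the classical discriminant asymptotics for an $L_{1}$ potential of period $\pi$, namely $F(\lambda)=2\cos(\pi\sqrt{\lambda})+o(1)$ as $\lambda\to+\infty$ along the real axis (see [3]); choosing $\lambda$ with $\sqrt{\lambda}$ close to a half-integer produces points $\lambda^{*}\in(R_{0},\infty)$, arbitrarily large, at which the leading term is near $0$ and hence $-2<F(\lambda^{*})<2$. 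The constant-sign property then forces $F-2<0$ and $F+2>0$ on all of $(R_{0},\infty)$, i.e. $-2<F(\lambda)<2$ there, so by Summary 1$(c)$ we get $(R_{0},\infty)\subset\sigma(L(q))$, and since $\sigma(L(q))$ is closed the claim holds with $R=R_{0}$.

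For the finiteness of the gaps I would describe $\operatorname{Re}(\sigma(L(q)))$ as the complement in $\mathbb{R}$ of the open set $\{\lambda\in\mathbb{R}:F(\lambda)>2\}\cup\{\lambda\in\mathbb{R}:F(\lambda)<-2\}$; the gaps are its bounded connected components. Each endpoint of such a component is a real point where $F=2$ or $F=-2$, that is, a real periodic or antiperiodic eigenvalue, and by the first paragraph there are only finitely many of these. Since every bounded component has two endpoints drawn from this finite set, and since by the second paragraph no component meets $[R,\infty)$, the number of gaps is finite; the only unbounded component is $(-\infty,\lambda_{\min})$ on which $F\to+\infty$, which is the region below the bottom of the spectrum rather than a gap. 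The step I expect to be delicate is the sign determination in the middle paragraph: this is exactly where the hypothesis that all large eigenvalues are nonreal enters, and one must ensure that the $o(1)$ error in the asymptotics does not spoil the strict inequalities — which is why I would take test points where the leading term $2\cos(\pi\sqrt{\lambda})$ is near $0$ rather than near $\pm2$.
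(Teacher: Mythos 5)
Your proposal is correct and takes essentially the same route as the paper: both arguments reduce everything to the real-valued Hill discriminant, using Summary 1$(c)$, the identification of the roots of $F(\lambda)=\pm2$ with the periodic and antiperiodic eigenvalues, the hypothesis that only finitely many of these can be real, and the asymptotics of $F$ combined with the intermediate value theorem. The only difference is organizational rather than conceptual: the paper runs the intermediate value argument as a proof by contradiction and leaves the finiteness-of-gaps bookkeeping implicit, whereas you argue directly (constant sign of $F\mp2$ beyond the last real eigenvalue, pinned down by test points where $2\cos(\pi\sqrt{\lambda})$ is small) and spell out the endpoint-counting for the gaps.
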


\begin{proof}
It is well-known that $\mu_{n}(0)$ and $\mu_{n}(\pi)$ are the zeros of
$F(\lambda)=2$ and $F(\lambda)=-2$ respectively (see (3)), where the Hill
discriminant $F$ is continuous on $\mathbb{R}$, $F(\lambda)\in\mathbb{R}$ for
$\lambda\in\mathbb{R}$ (see (5)), the asymptotic formula $F(\lambda
)=2\cos\sqrt{\lambda}+O(1/\sqrt{\lambda})$ as $\lambda\rightarrow\infty$ holds
(see [3]) and $\lambda\in\sigma(L(q))$ if and only if $F(\lambda)\in
\lbrack-2,2]$ (see Summary 1 $(c)$). Thus it is enough to show that there
exists a large number $R$ such that $F(\lambda)\in\lbrack-2,2]$ for all
$\lambda\geq R.$ Suppose to the contrary that for any large positive number
$R$ there exists $\lambda_{1}\geq R$ such that $F(\lambda_{1})\notin
\lbrack-2,2].$ Without loss of generality, assume that $F(\lambda_{1})>2.$ On
the other hand, it follows from the above asymptotic formula for $F(\lambda)$
that there exists $\lambda_{2}>\lambda_{1}$ such that $F(\lambda_{2})<2.$
Since $F$ is a continuous real-valued function on $\left[  \lambda_{1}%
,\lambda_{2}\right]  ,$ there exists $\lambda\in\left[  \lambda_{1}%
,\lambda_{2}\right]  $ such that $F(\lambda)=2,$ that is, $\lambda=\mu_{n}(0)$
and hence $\mu_{n}(0)\in\mathbb{R}$. It contradicts the conditions of the Theorem.
\end{proof}

\begin{remark}
There exist a lot of asymptotic formulas for $\mu_{n}(0)$ and $\mu_{n}(\pi).$
Using those formulas one can find the conditions on $q$ such that $\mu_{n}(0)$
and $\mu_{n}(\pi)$ are nonreal number which implies that the number of gaps in
$\operatorname{Re}(\sigma(L(q)))$ is finite. Suppose that we have the formulas
$\mu_{n}(0)=a_{n}+o(n^{-\alpha})$ and $\mu_{n}(\pi)=b_{n}+o(n^{-\alpha})$ . If
there exist $m>0$ and $c>0$ such that $\ \left\vert \operatorname{Im}%
a_{n}\right\vert >cn^{-\alpha}$ and $\left\vert \operatorname{Im}%
b_{n}\right\vert >cn^{-\alpha}$ for\ all $n>m,$ then $[R,\infty)\subset
\sigma(L(q))$ for some $R.$ Besides, there exist a lot of asymptotic formulas
for the distances between neighboring periodic and antiperiodic eigenvalues
and it readily follows from (5) that the distances are either $\left\vert
2\operatorname{Im}\mu_{n}(0)\right\vert $ or $\left\vert 2\operatorname{Im}%
\mu_{n}(\pi)\right\vert $ if $\mu_{n}(0)$ and $\mu_{n}(\pi)$ are nonreal and
$n$ is a large number. Using these relations and asymptotic formulas one can
construct a large class of the potentials $q$ for which the number of gaps in
$\operatorname{Re}(\sigma(L(q)))$ is finite.
\end{remark}

\section{Finding the Second Critical Point}

In this section we find the approximate value of the second critical point.
For this we find the approximate value of the degeneration point for the first
periodic eigenvalue. Recall that it is the value of $a\in I(2)$ for which
$\lambda_{0}(a)=\lambda_{2}^{-}(a)$. Since $\lambda_{0}(a)$ and $\lambda
_{2}^{-}(a)$ are the $PN(a)$ eigenvalues lying in $D_{6}(3)$ (see Notation1)
we consider (16) for $\left\vert a\right\vert <2$ and $\left\vert
\lambda\right\vert \leq9$. The third formula in (16) can be written as
\begin{equation}
a_{k}=\frac{aa_{k-1}+aa_{k+1}}{\lambda-(2k)^{2}},\text{ }\forall k>1.
\end{equation}
\ Using it for $a_{2},$ that is, for $k=2$ in the second formula of (16) we
get
\begin{equation}
\text{ }(\lambda-4)a_{1}=a\sqrt{2}a_{0}+\frac{a^{2}a_{1}}{(\lambda-16)}%
+\frac{a^{2}a_{3}}{(\lambda-16)}.
\end{equation}
Now we use (41) in (42) as follows. In the right hand side of (42), we isolate
the term with multiplicand $a_{1}$ and use (41) for $a_{3}.$ Then in the
obtained formula we replace everywhere $a_{k}$ by the right side of (41) if
$k>1.$ In other word, the rule of usage of (41) is the following. Every time
we isolate the terms with multiplicand $a_{1}$ and do not change it, while use
(41) for $a_{2},a_{3},....$ One can readily see that the second, fourth,....,
$2m$-th usages (41) in (42) give the terms, denoted by $A_{1}(a,\lambda
)a_{1},$ $A_{2}(a,\lambda)a_{1},...,$ $A_{m}(a,\lambda)a_{1},$ with
multiplicands $a_{1}.$ Thus after $2m$ times usages we get
\begin{equation}
\text{ }(\lambda-4)a_{1}=a\sqrt{2}a_{0}+\frac{a^{2}a_{1}}{(\lambda
-16)}+\left(
%TCIMACRO{\tsum \limits_{k=1}^{m}}%
%BeginExpansion
{\textstyle\sum\limits_{k=1}^{m}}
%EndExpansion
A_{k}(a,\lambda)\right)  a_{1}+R_{m}(a,\lambda),
\end{equation}
where $R_{m}(\lambda)$ is the sum of the terms without multiplicand $a_{1}.$
To explain (43) and write the formulas for $A_{k}(a,\lambda)$ and
$R_{m}(a,\lambda)$ we use the indices $n_{1},n_{2},...$ whose values are
either $-1$ or $1.$ The formula (41) for $\ k=3$ and $k=3+n_{1}$ can be
written as follows
\[
a_{3}=\sum_{n_{1}=-1,1}\frac{aa_{3+n_{1}}}{\lambda-36}\And a_{3+n_{1}}%
=\sum_{n_{2}=-1,1}\frac{aa_{3+n_{1}+n_{2}}}{\lambda-(6+2n_{1})^{2}}.
\]
These two formulas give
\begin{equation}
a_{3}=\sum_{n_{2}=-1,1}\left(  \sum_{n_{1}=-1,1}\frac{a^{2}a_{3+n_{1}+n_{2}}%
}{\left(  \lambda-36\right)  \left(  \lambda-(6+2n_{1})^{2}\right)  }\right)
.
\end{equation}
If $3+n_{1}+n_{2}=1,$\ that is, if $n_{1}=-1$ and $n_{2}=-1,$ then we get the
term with multiplicand $a_{1}.$ In (44) isolating the term with multiplicand
$a_{1}$ and using (41) for $a_{3+n_{1}+n_{2}}$ when $3+n_{1}+n_{2}>1,$ and
then for $a_{3+n_{1}+n_{2}+n_{3}},$ we get
\begin{equation}
a_{3}=\frac{a^{2}a_{1}}{(\lambda-36)(\lambda-16)}+
\end{equation}%
\[
\sum_{n_{1},n_{2},n_{3},n_{4}}\frac{\left(  \lambda-36\right)  ^{-1}%
a^{4}a_{3+n_{1}+n_{2}+n_{3}+n_{4}}}{\left(  \lambda-(6+2n_{1})^{2}\right)
\left(  \lambda-(6+2n_{1}+2n_{2}\right)  ^{2})\left(  \lambda-(6+2n_{1}%
+2n_{2}+2n_{3})^{2}\right)  },
\]
where the summation is taken under condition $3+n_{1}+n_{2}>1.$ Repeating
these usage of (41) $2m$ times and then using the formula obtained for $a_{3}$
in (42) we get (43) and see $A_{k}(a,\lambda)$ and $R_{m}(a,\lambda)$ has the
form%
\begin{equation}
A_{1}(a,\lambda)=\frac{a^{4}}{(\lambda-16)^{2}(\lambda-36)},
\end{equation}%
\begin{equation}
A_{k}(a,\lambda)=\sum_{n_{1},n_{2},...,n_{2k-1}}\frac{(\lambda-16)^{-1}\left(
\lambda-36\right)  ^{-1}a^{2k+2}}{%
%TCIMACRO{\tprod \limits_{s=1,2,...,2k-1}}%
%BeginExpansion
{\textstyle\prod\limits_{s=1,2,...,2k-1}}
%EndExpansion
[\lambda-(6+2n_{1}+2n_{2}+...+2n_{s})^{2}]}%
\end{equation}
for $k=2,3,...,$ and
\begin{equation}
R_{m}=\sum_{n_{1},n_{2},...,n_{2m}}\frac{(\lambda-16)^{-1}\left(
\lambda-36\right)  ^{-1}a^{2m+2}a_{3+n_{1}+n_{2}+...+n_{2m}}}{%
%TCIMACRO{\tprod \limits_{s=1,2,...,2m-1}}%
%BeginExpansion
{\textstyle\prod\limits_{s=1,2,...,2m-1}}
%EndExpansion
[\lambda-(6+2n_{1}+2n_{2}+...+2n_{s})^{2}]}.
\end{equation}
Note that $A_{1}(a,\lambda)$ is obtained due to the first term in the right
side of (45), that is, obtained in the second usage of (41) by taking
$3+n_{1}+n_{2}=1.$ The term $A_{2}$ is obtained in the fourth usage of (41) if
$3+n_{1}+n_{2}+n_{3}+n_{4}=1$ and $3+n_{1}+n_{2}>1.$ The last inequality is
necessary, for doing the third and fourth usages by the rule of usage. The
term $A_{k}$ is obtained in the $2k$th usage of (41) if
\begin{align}
3+n_{1}+n_{2}+...+n_{2k}  &  =1\text{ },\\
3+n_{1}+n_{2}+...+n_{2s}  &  >1
\end{align}
for $s=1,2,...,k-1$, since the last inequalities are necessary for doing the
$\left(  2k-1\right)  $th and $2k$th usages of (41) by the rule of usage. In
other word, the summation in (47) is taken under conditions (49) and (50).
Moreover, using (50) for $s=k-1$ and taking into account that $n_{1}%
+n_{2}+...+n_{2k-2}$ is an even number from (49) we obtain $n_{2k}%
=n_{2k-1}=-1,$
\begin{equation}
n_{1}+n_{2}+...+n_{2k-2}=0.
\end{equation}
These equalities imply that
\[
\lambda-(6+2n_{1}+2n_{2}+...+2n_{2k-2})^{2}=\lambda-36,\text{ }\lambda
-(6+2n_{1}+2n_{2}+...+2n_{2k-1})^{2}=\lambda-16.
\]
Using it in (47) we get
\begin{equation}
A_{k}(a,\lambda)=\sum_{n_{1},n_{2},...,n_{2k-3}}\frac{(\lambda-16)^{-2}\left(
\lambda-36\right)  ^{-2}a^{2k+2}}{%
%TCIMACRO{\tprod \limits_{s=1,2,...,2k-3}}%
%BeginExpansion
{\textstyle\prod\limits_{s=1,2,...,2k-3}}
%EndExpansion
[\lambda-(6+2n_{1}+2n_{2}+...+2n_{s})^{2}]}%
\end{equation}
for $k\geq2,$ where the summation is taken under conditions (50) for
$s=1,2,...,k-2$ and (51). Since $n_{2k-2}$ is $\pm1$ and does not take part in
(52) the equality (51) can be written as $n_{1}+n_{2}+...+n_{2k-3}=\pm1.$ Thus
the summation in (52) is taken under conditions%
\begin{equation}
n_{1}+n_{2}+...+n_{2k-3}=\pm1,\text{ }3+n_{1}+n_{2}+...+n_{2s}>1,\text{
}\forall s=1,2,...,k-2
\end{equation}
Then for $k=2$ we have%
\begin{equation}
A_{2}(a,\lambda)=\frac{a^{6}}{(\lambda-16)^{3}(\lambda-36)^{2}}+\frac{a^{6}%
}{(\lambda-16)^{2}(\lambda-36)^{2}(\lambda-64)}.
\end{equation}

In Appendix (see (78)) we prove that $R_{m}(a,\lambda)\rightarrow0$ as
$m\rightarrow\infty.$ Therefore in (43) letting $m$ tend to infinity, using
$a_{1}=\frac{\lambda a_{0}}{\sqrt{2}a}$ (see (16)) and then dividing by
$a_{0}$ we get
\begin{equation}
\text{ }\lambda^{2}-4\lambda-2a^{2}-\frac{a^{2}\lambda}{(\lambda-16)}-%
%TCIMACRO{\tsum \limits_{k=1}^{\infty}}%
%BeginExpansion
{\textstyle\sum\limits_{k=1}^{\infty}}
%EndExpansion
\lambda A_{k}(a,\lambda)=0,
\end{equation}
where the series in (55) converges to some analytic function (see Remark 6).

\begin{theorem}
$(a)$ If $a\in I(2),$ then equation (55) has 2 roots (counting multiplicity)
inside the circle $C_{9}(0)=:\left\{  \lambda:\left\vert \lambda\right\vert
=9\right\}  .$ These roots coincide with the $PN(a)$ eigenvalues $\lambda
_{0}(a)$ and $\lambda_{2}^{-}(a)$ defined in Notation 1.

$(b)$ For each $a\in I(2)$ the number $\lambda_{0}(a)-\lambda_{2}^{-}(a)$ is
either real or pure imaginary.

$(c)$ There exists a unique number $a\in I(2),$ such that $\lambda
_{0}(a)=\lambda_{2}^{-}(a)$.
\end{theorem}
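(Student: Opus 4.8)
Denote by $\Phi(a,\lambda)$ the left-hand side of (55), so that the claim concerns the zeros of $\Phi(a,\cdot)$ inside $C_9(0)$. For \textbf{part (a)} the plan is to apply Rouché's theorem on $C_9(0)$, comparing $\Phi(a,\lambda)$ with its leading part $p(\lambda):=\lambda^2-4\lambda$. On $C_9(0)$ one has $|p(\lambda)|=|\lambda|\,|\lambda-4|\ge 9\cdot 5=45$. For the remaining terms I would use that $a\in I(2)$ gives $|a|^2<4$, that every pole of $\Phi$ (namely $16,36,64,\dots$, coming from the factors $\lambda-(2m)^2$ with $m\ge 2$) lies outside $C_9(0)$ at distance $\ge 7$, and the Appendix bound showing $R_m\to 0$ and hence the convergence of $\sum_k\lambda A_k$; elementary estimates then give $|2a^2|+|a^2\lambda/(\lambda-16)|+|\sum_k\lambda A_k|<45$ on $C_9(0)$. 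Thus $\Phi(a,\cdot)$ and $p$ have the same number of zeros, namely $2$, inside $C_9(0)$, and $\Phi(a,\cdot)$ has no zero on $C_9(0)$. Finally, $\lambda_0(a)$ and $\lambda_2^-(a)$ are $PN$ eigenvalues, hence satisfy (16) and therefore the derived equation (55); by Theorem 1(c) and Notation 1 they lie in $D_6(3)\subset\overline{D_9(0)}$, and being zeros of $\Phi(a,\cdot)$ they cannot sit on $C_9(0)$, so they lie in the open disc. As two zeros inside a disc that contains only two, they are exactly the roots.

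For \textbf{part (b)} the key observation is that for $a\in I(2)$ we have $a=ic$ with $a^2=-c^2\in\mathbb{R}$, so every term of $\Phi(a,\lambda)$ — the powers $a^{2k+2}=(a^2)^{k+1}$ times rational functions of $\lambda$ with real coefficients — is real for real $\lambda$. Hence $\Phi(a,\cdot)$ satisfies the reflection identity $\overline{\Phi(a,\lambda)}=\Phi(a,\overline\lambda)$, so its zero set inside the conjugation-symmetric disc bounded by $C_9(0)$ is invariant under complex conjugation. By part (a) there are exactly two such zeros, $\lambda_0(a)$ and $\lambda_2^-(a)$; therefore either both are real, or they form a conjugate pair $\lambda_0(a)=\overline{\lambda_2^-(a)}$. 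In the first case $\lambda_0(a)-\lambda_2^-(a)\in\mathbb{R}$; in the second $\lambda_0(a)-\lambda_2^-(a)=\lambda_0(a)-\overline{\lambda_0(a)}=2i\operatorname{Im}\lambda_0(a)$ is pure imaginary.

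For \textbf{existence in part (c)} I would work with $g(a):=(\lambda_0(a)-\lambda_2^-(a))^2=s_1(a)^2-4s_2(a)$, where $s_1=\lambda_0+\lambda_2^-$ and $s_2=\lambda_0\lambda_2^-$ are the elementary symmetric functions of the two roots. By (a) there are no zeros of $\Phi(a,\cdot)$ on $C_9(0)$, so $s_1$ and $s_2$ are given by the contour integrals $\frac{1}{2\pi i}\oint_{C_9(0)}\lambda^{j}\,\partial_\lambda\Phi/\Phi\,d\lambda$ and depend analytically on $a$; hence $g$ is analytic in $a$, and by (b) it is real-valued on $I(2)$. From (25), $g(a)>0$ for small $a$ (there $\lambda_0\approx 0$ and $\lambda_2^-\approx 4$, so $g\approx 16$). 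At the concrete value of $a$ produced in the next section by the second approximation (60) together with the Rouché estimate of the remainder — where $\lambda_0(a)$ and $\lambda_2^-(a)$ are shown to be a conjugate pair — one has $g(a)<0$. By the intermediate value theorem there is $a^*=ir\in I(2)$ with $g(a^*)=0$, that is $\lambda_0(a^*)=\lambda_2^-(a^*)$.

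The remaining task, \textbf{uniqueness in part (c)}, is the main obstacle. At a merging point $a^*$ the common value $\lambda^*$ is a double zero of $\Phi(a^*,\cdot)$; a direct estimate (the leading part contributes $\partial_\lambda^2 p=2$, dominating the small correction) shows $\partial_\lambda^2\Phi(a^*,\lambda^*)\ne 0$ and $\partial_{a^2}\Phi(a^*,\lambda^*)\ne 0$, so the two roots split like $\lambda^*\pm\kappa\sqrt{a-a^*}$ with $\kappa\ne 0$, whence $g$ has a simple zero and changes sign at $a^*$. The conceptual reason uniqueness should hold is that, once $\{\lambda_0,\lambda_2^-\}$ becomes a conjugate pair, PT-symmetry (Summary 1(e)) together with the reality of $\lambda_2^+$ (Theorem 4) pins $\lambda_0=\overline{\lambda_2^-}$, and the pair cannot return to the real axis for larger $a\in I(2)$ without creating a second double eigenvalue. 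Turning ``no return to reality'' into a non-circular proof is the hard part: the argument of Theorem 5(d) presupposes the present uniqueness, so instead I would establish it quantitatively, reducing (55) to the truncated polynomial equation (60), examining the explicit low-degree discriminant of its numerator as a function of $a^2$ to see that it has a single relevant sign change, and using Rouché to control the remainder $R_m$ uniformly in $a$ so that the double-root parameter of (55) is forced to be unique. This uniform remainder estimate, guaranteeing that the truncation faithfully captures the single degeneration of the full transcendental equation, is the step I expect to be most delicate.
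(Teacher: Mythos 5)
Your parts (a) and (b) are correct and essentially identical to the paper's own argument: Rouch\'e on $C_{9}(0)$ using the Appendix bounds (the paper groups $-2a^{2}$ with the quadratic and gets the lower bound $37$, you keep $\lambda^{2}-4\lambda$ and get $45$ -- immaterial), and then conjugation symmetry of (55), which holds because $a^{2}$ is real for $a\in I(2)$. Your existence argument in (c) -- the intermediate value theorem applied to the real-valued analytic function $(\lambda_{0}-\lambda_{2}^{-})^{2}$, with the sign change supplied by the numerics of Section 5 -- is also workable and non-circular, since Theorem 11(a),(b) only invoke parts (a),(b); indeed it is exactly how the paper itself locates $r$ in Theorem 11(c). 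But it makes Theorem 10(c) depend on the machine computations, whereas the paper's proof of (c) is self-contained.

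The genuine gap is uniqueness in (c), and you have flagged it yourself: your proposed route (discriminant of the truncation (60) plus a Rouch\'e control of the remainder uniformly in $a$) is never carried out, and the one piece you do argue -- that at a merging point the roots split like $\lambda^{*}\pm\kappa\sqrt{a-a^{*}}$, so $g$ has a simple zero -- does not yield uniqueness: simple sign changes can occur repeatedly, and nothing in your argument controls the \emph{direction} of the sign change, so a pattern $+,-,+,-$ on $I(2)$ is not excluded. The paper's proof is different and closes this cleanly, with no truncation and no numerics. Writing $N(a,\lambda)$ for the left side of (55), a double root satisfies $N=N'=0$. The bounds (82) show that every solution of $N'(a,\lambda)=0$ with $\left\vert a\right\vert <2$, $\left\vert \lambda\right\vert <9$ in fact has $\left\vert \lambda\right\vert <3$ and $N''(a,\lambda)\neq0$; hence by the implicit function theorem the critical point is a single analytic function $\lambda(a)=2+f(a)$ with $\left\vert f(a)\right\vert <1/2$. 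Substituting this into $N=0$ collapses the two-variable double-root condition into a scalar equation $2+a^{2}=g(a)$, $\left\vert g(a)\right\vert <1$, in the variable $a$ alone, and Rouch\'e applied to $2+a^{2}$ versus $2+a^{2}-g(a)$ on the circle $C_{1/2}(\sqrt{2}i)$, where $\left\vert 2+a^{2}\right\vert \geq\sqrt{2}-1/4>1$, gives exactly one solution inside that circle. Since any $a=ix\in I(2)$ admitting a double root must satisfy $\left\vert 2-x^{2}\right\vert =\left\vert 2+a^{2}\right\vert <1$, i.e. $x\in(1,\sqrt{3})$, and that segment lies inside $C_{1/2}(\sqrt{2}i)$ (and the reflection symmetry $a\mapsto-\overline{a}$ forces the unique zero to be purely imaginary), this single root count in the $a$-plane delivers existence and uniqueness on all of $I(2)$ at once. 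That reduction -- implicit function theorem in $\lambda$, then Rouch\'e in $a$ -- is the idea missing from your proposal.
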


\begin{proof}
$(a)$ The equation $\lambda^{2}-4\lambda-2a^{2}=0$ has $2$ roots in the disc
$D_{9}(0)$ and
\[
\left\vert \lambda^{2}-4\lambda-2a^{2}\right\vert >37,\text{ }\forall
\lambda\in C_{9}(0).
\]
Therefore the estimation $\left\vert a^{2}\lambda/(\lambda-16)\right\vert
<36/7$ and Remark 6 imply that (55) has two roots in $D_{9}(0)$ due to the
Rouche's theorem. Since the $PN(a)$ eigenvalues $\lambda_{0}(a)$ and
$\lambda_{2}^{-}(a)$ are also the roots of (55) lying in $D_{9}(0)$ they
coincides with those roots.

$(b)$ If $\lambda$ is a root of (55) lying in $D_{9}(0)$ then $\overline
{\lambda}$ is also is a root of (55) lying in $D_{9}(0)$ due to reality of
$a^{2}.$ Therefore the proof of $(b)$ follows from $(a).$

$(c)$ Denote by $N(a,\lambda)$ the right hand side of (55). Double root of
(55) satisfies $N(a,\lambda)=0$ and $N^{\prime}(a,\lambda)=0,$ where
$N^{\prime}(a,\lambda)$ is the derivative of $N(a,\lambda)$ with respect to
$\lambda.$ From (79)-(81) we see that the series in (55) can be differentiated
term by term and%
\begin{equation}
N^{\prime}(a,\lambda)=2\lambda-4-\frac{a^{2}}{\lambda-16}+\frac{\lambda a^{2}%
}{\left(  \lambda-16\right)  ^{2}}-%
%TCIMACRO{\tsum \limits_{k=1}^{\infty}}%
%BeginExpansion
{\textstyle\sum\limits_{k=1}^{\infty}}
%EndExpansion
(A_{k}(a,\lambda)+\lambda A_{k}^{^{\prime}}(a,\lambda)),
\end{equation}%
\begin{equation}
N^{^{\prime\prime}}(a,\lambda)=2+\frac{2a^{2}}{\left(  \lambda-16\right)
^{2}}-\frac{2\lambda a^{2}}{\left(  \lambda-16\right)  ^{3}}-%
%TCIMACRO{\tsum \limits_{k=1}^{\infty}}%
%BeginExpansion
{\textstyle\sum\limits_{k=1}^{\infty}}
%EndExpansion
(2A_{k}^{\prime}(a,\lambda)+\lambda A_{k}^{\prime\prime}(a,\lambda)).
\end{equation}
If $\left\vert \lambda\right\vert <9$ and $\left\vert a\right\vert <2,$ then
from $N^{\prime}(a,\lambda)=0,$ by using (82), we\ obtain that $\left\vert
\lambda\right\vert <3$ and $N^{\prime\prime}(a,\lambda)\neq0.$ Thus, by the
implicit function theorem, $\lambda(a)$ is an analytic function satisfying%
\begin{equation}
\lambda(a)=2+\frac{a^{2}}{2\left(  \lambda(a)-16\right)  }-\frac
{\lambda(a)a^{2}}{2\left(  \lambda(a)-16\right)  ^{2}}+\frac{1}{2}%
%TCIMACRO{\tsum \limits_{k=1}^{\infty}}%
%BeginExpansion
{\textstyle\sum\limits_{k=1}^{\infty}}
%EndExpansion
(A_{k}(a,\lambda(a))+\lambda(a)A_{k}^{^{\prime}}(a,\lambda(a)))
\end{equation}
from which we obtain $\lambda(a)=2+f(a)$ and $\left\vert f(a)\right\vert
<1/2,$ where $f$ is an analytic function. Using it in (55) we get
\begin{equation}
2+a^{2}=g(a),\text{ }\left\vert g(a)\right\vert <1,
\end{equation}
where $g$ is an analytic function. Now using the Rouche's theorem for
functions $2+a^{2}$ and $2+a^{2}-g(a)$ on the circle $C_{1/2}(\sqrt{2}i)$ we
obtain that the equation (55) has a unique double root inside the circle. It
implies that there exists unique value of $a\in I(2)$ such that $\lambda
_{0}(a)$ is a double eigenvalue.
\end{proof}

\begin{remark}
Solving (59) by the numerical methods one can find an arbitrary approximation
for the second critical point $V_{2}$. \ It is the value of $V\in(1/2,\sqrt
{5}/2)$ for which (55) for $a=\sqrt{1-4V^{2}}$ has a double eigenvalue in
$D_{9}(0).$ In other words, we find the degeneration point for the first
periodic eigenvalue, that is, the value of $a$ for which $\lambda
_{0}(a)=\lambda_{2}^{-}(a).$ These investigations show that $\left(
n+1\right)  $th critical point $V_{n+1}$ is the value of $V\in(1/2,\infty)$
for which $\lambda_{2n-2}^{+}(a)=\lambda_{2n}^{-}(a)$ for $a=\sqrt{1-4V^{2}}$.
Therefore we say that, $a$ is the degeneration point for the $(2n-1)$th
periodic eigenvalue. Note also that then $I_{n}(a)$ consists of one point (see
(12)) and after $a$ the real parts of $(2n-1)$th and $2n$th bands disappear.
The eigenvalues $\lambda_{2n-2}^{+}(a)$ and $\lambda_{2n}^{-}(a)$ are either
$PN(a)$ or $PD(a)$ eigenvalues and hence satisfy either (16) or (17).
Therefore considering these equations in the corresponding regions and
iterating the formulas (16) or (17) for $k=n$ and arguing as in the proof of
(43) (each time isolate the terms with multiplicand $a_{n}$ or $b_{n}$ and do
not change they and use (16) or (17) if $k\neq n)$ we get a formula similar to
(55). The value of $V\in(1/2,\infty)$ for which the obtained equation for
$a=\sqrt{1-4V^{2}}$ has a double root is the $n$th critical point or the
degeneration point for the $(2n-1)$th periodic eigenvalue.
\end{remark}

Now we find the approximate value of $V_{2}$ as follows. For the $m$-th
approximations we use the equation obtained from (55) by replacing the
summations from $1$ to $\infty$ with the summation from $1$ to $m.$ For the
estimation of the second critical point we use the second approximation which,
by (46) and (54) has the form\ %

\begin{equation}
\text{ }Q(a^{2},\lambda)=:\lambda^{2}-4\lambda-\frac{a^{2}\lambda}%
{(\lambda-16)}-\frac{a^{4}\lambda}{(\lambda-16)^{2}(\lambda-36)}-
\end{equation}%
\[
\frac{a^{6}\lambda}{(\lambda-16)^{3}(\lambda-36)^{2}}-\frac{a^{6}\lambda
}{(\lambda-16)^{2}(\lambda-36)^{2}(\lambda-64)}-2a^{2}=0\text{.}%
\]

\begin{theorem}
$(a)$ If $a^{2}=-2.15728123$ then $\lambda_{0}(a)$ and $\lambda_{2}(a)$ are
the real eigenvalues of $H_{0}(a)$ lying respectively inside the circles
\begin{equation}
\gamma_{1}=\left\{  \left\vert \lambda-2.088438808\right\vert
=0.00023\right\}  \And\gamma_{2}=\left\{  \left\vert \lambda
-2.088959036\right\vert =0.00023\right\}  .
\end{equation}

$(b)$ If $a^{2}=-2.157281295$ then $\lambda_{0}(a)$ and $\lambda_{2}(a)$ are
the nonreal eigenvalues of $H_{0}(a)$ lying respectively inside the circles
\begin{equation}
\gamma_{3,4}=\left\{  \left\vert \lambda-2.088698925\pm0.000232839i\right\vert
=0.00023\right\}  .
\end{equation}

$(c)$ The second critical number $V_{2}$ satisfies the inequalities
\begin{equation}
0.8884370025<V_{2}<0.8884370117
\end{equation}

\end{theorem}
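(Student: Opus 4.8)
The plan is to pin down the two roots $\lambda_0(a)$ and $\lambda_2^-(a)$ of the exact equation (55) --- which by Theorem 10$(a)$ are the only two roots of (55) inside $D_9(0)$ --- by comparing (55) with its second approximation $Q(a^2,\lambda)=0$ from (60). For each of the two prescribed values of $a^2$ I would first use Mathematica to compute the roots of $Q(a^2,\lambda)=0$ lying near $2$; these roots are exactly the centres of the circles $\gamma_1,\gamma_2$ in part $(a)$ and $\gamma_{3,4}$ in part $(b)$. The difference between the left-hand side of (55) and $Q(a^2,\lambda)$ is the tail $\sum_{k\ge 3}\lambda A_k(a,\lambda)$, and the device that transfers information from the approximation to the exact equation is Rouch\'e's theorem applied on each small circle: if on the boundary of $\gamma_i$ the tail is strictly smaller in modulus than $|Q(a^2,\lambda)|$, then (55) and its approximation have the same number of roots inside $\gamma_i$, namely one, so the exact eigenvalue sits inside $\gamma_i$.

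To make Rouch\'e's theorem work I would bound $|Q(a^2,\lambda)|$ from below on each circle (of radius $0.00023$) and the tail $\sum_{k\ge3}\lambda A_k(a,\lambda)$ from above, using the estimates (78)--(82) of the Appendix. Since every $\lambda$ on these circles has $|\lambda|$ close to $2$ and hence stays far from the poles $16,36,64,\dots$ of the $A_k$, formula (52) shows that $A_k(a,\lambda)=O(a^{2k+2})$ with denominators bounded away from zero, so the tail is of order $a^{8}$ and is negligible against the lower bound for $|Q|$ on circles of this size. Once each circle is known to contain exactly one root of (55), the reality or nonreality claimed in $(a)$ and $(b)$ follows from Theorem 10$(b)$, which guarantees that $\lambda_0(a)-\lambda_2^-(a)$ is either real or pure imaginary. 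In $(a)$ the two circles $\gamma_1,\gamma_2$ are centred at the distinct real points $2.088438808$ and $2.088959036$ and are disjoint, so the two roots have different real parts; a conjugate pair would share a common real part, so the difference cannot be pure imaginary and must be real, whence both eigenvalues are real. In $(b)$ the circles $\gamma_{3,4}$ are centred at the conjugate points $2.088698925\pm 0.000232839\,i$ with equal real parts, so the difference is pure imaginary and the two eigenvalues form a nonreal conjugate pair.

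For part $(c)$ I would combine $(a)$ and $(b)$ with Theorem 5 and Theorem 10$(c)$. By Theorem 10$(c)$ there is a unique $a=ir\in I(2)$ with $\lambda_0(a)=\lambda_2^-(a)$, and by Theorem 5 the eigenvalues $\lambda_0(a),\lambda_2^-(a)$ are real for $0<a/i<r$ and nonreal for $r<a/i<2$, while $V_2=\tfrac12(1+r^2)^{1/2}$ and $V=\tfrac12(1-a^2)^{1/2}$. The value $a^2=-2.15728123$ of $(a)$ produces real eigenvalues, hence $a/i<r$ and its $V=\tfrac12\sqrt{3.15728123}$ lies to the left of $V_2$; the value $a^2=-2.157281295$ of $(b)$ produces nonreal eigenvalues, hence $a/i>r$ and its $V=\tfrac12\sqrt{3.157281295}$ lies to the right of $V_2$. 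Evaluating the two square roots gives $0.8884370025<V_2<0.8884370117$, which is (63).

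The hard part will be the rigorous tail estimate underlying Rouch\'e's theorem. Because the two prescribed values of $a^2$ agree to eight significant figures and the circles have radius only $0.00023$ with centres separated by a comparable amount, the remainder $\sum_{k\ge3}\lambda A_k(a,\lambda)$ has to be controlled to very high precision --- sharply enough that $|Q(a^2,\lambda)|$ genuinely dominates it on these tiny circles. Turning the numerical output of Mathematica into provable inequalities of this sharpness, via the Appendix estimates (78)--(82), is the technical crux of the argument.
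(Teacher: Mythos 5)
Your outline is essentially the paper's own proof: the same second approximation $Q(a^{2},\lambda)$ of (60) with Mathematica-computed roots serving as the centres of $\gamma_{1},\dots,\gamma_{4}$, the same Rouch\'e comparison of the exact equation (55) with $Q$ on those small circles, the same use of Theorem 10$(b)$ together with conjugation symmetry to settle reality in $(a)$ and nonreality in $(b)$, and in $(c)$ the same sandwiching of $r^{2}$, hence of $V_{2}=\tfrac{1}{2}\sqrt{1+r^{2}}$, between the two prescribed values of $-a^{2}$ (the paper argues part $(c)$ via continuity of $\lambda_{0}-\lambda_{2}^{-}$ and the real/pure-imaginary dichotomy of Theorem 10$(b)$ rather than quoting Theorem 5$(b)$,$(d)$, but the two routes are interchangeable).

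There is, however, one step that would fail exactly as you wrote it: bounding the tail $\sum_{k\ge3}\lambda A_{k}(a,\lambda)$ ``using the estimates (78)--(82).'' Those estimates are uniform over $|\lambda|\le 9$ and give only $\sum_{k\ge3}\left\vert A_{k}\right\vert \lesssim 5\times10^{-5}$, i.e.\ a tail bound of order $10^{-4}$, whereas the lower bound for $|Q|$ on the circles --- which the paper gets from the factorization of the degree-$8$ polynomial $P=(\lambda-16)^{3}(\lambda-36)^{2}(\lambda-64)\,Q$ over \emph{all} of its computed roots, see (64), not just the two roots near $\lambda=2$ --- is only $5\times10^{-8}$. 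So with the bounds you cite, Rouch\'e misses by more than three orders of magnitude. The paper closes this gap with a dedicated circle-specific computation (Estimation 4): it writes out $A_{3}$ and $A_{4}$ explicitly from (52)--(53), exploits the fact that on the circles $|\lambda|\approx 2.09$ so every denominator factor has modulus at least about $13.9$, and sums a geometric series for $k>4$, arriving at the bound $4.7357\times10^{-8}$ of (94). This beats $5\times10^{-8}$ by a margin of roughly $5\%$, so your word ``negligible'' is an overstatement; the two bounds are genuinely comparable, and the proof lives or dies on this comparison. Your parenthetical observation that $\lambda$ stays far from the poles $16,36,64,\dots$ is precisely the right mechanism, and you correctly flag this as the crux, but it is resolved by that separate, sharpened estimate rather than by the general Appendix bounds you invoke.
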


\begin{proof}
$(a)$ It is clear that
\[
P(a^{2},\lambda)=:(\lambda-16)^{3}(\lambda-36)^{2}(\lambda-64)Q(a^{2}%
,\lambda),
\]
where $Q(a^{2},\lambda)$ is defined in (60), is a polynomial with respect to
$\lambda$ of order $8.$ Computing by Matematika 7 we see that the roots of
$P(-2.15728123,\lambda)$ are
\begin{align*}
\lambda_{1}  &  =2.088438808,\text{ }\lambda_{2}=2.088959036,\text{ }%
\lambda_{3}=15.85581654,\text{ }\lambda_{4}=63.99999991,\\
\lambda_{5,6}  &  =15.98321016\pm0.11878598i,\text{ }\lambda_{7,8}%
=36.00018270\pm0.00333046i.
\end{align*}
Using the decomposition
\[
Q(a,\lambda)=\frac{(\lambda-\lambda_{1})(\lambda-\lambda_{2}).....(\lambda
-\lambda_{8})}{(\lambda-16)^{3}(\lambda-36)^{2}(\lambda-64)}%
\]
by direct calculations we obtain
\begin{equation}
\left\vert Q(-2.15728123,\lambda)\right\vert >5\times10^{-8},\text{ }%
\forall\lambda\in\gamma_{1}\cup\gamma_{2},
\end{equation}
On the other hand, in the Estimation 4 of the Appendix (see (94)) we prove
that
\begin{equation}
\left\vert
%TCIMACRO{\tsum \limits_{k\geq3}}%
%BeginExpansion
{\textstyle\sum\limits_{k\geq3}}
%EndExpansion
\lambda A_{k}(-2.15728123,\lambda)\right\vert <4.735\,7\times10^{-8},\text{
}\forall\lambda\in\gamma_{1}\cup\gamma_{2}%
\end{equation}
Hence by the Rouche's theorem the equation (55) for $a^{2}=-2.15728123$ has
only one root inside of each circles $\gamma_{1}$ and $\gamma_{2}$. Therefore
using Theorem 10 and taking into account that if $\lambda$ lies inside
$\gamma_{1}$ then $\overline{\lambda}$ does not lie inside $\gamma_{2}$, we
obtain that $\lambda_{0}(a)$ and $\lambda_{2}(a)$ are the real eigenvalues
lying respectively inside $\gamma_{1}$ and $\gamma_{2}$.

$(b)$ The roots of $P(a,\lambda)$ for the cases $a^{2}=-2.157281295$ are
\begin{align*}
&  2.088698925\pm0.000232839i,\text{ }15.98321016\pm0.11878599i,\\
&  15.85581654,\text{ }36.00018270\pm0.00333046i,\text{ }63.99999991.
\end{align*}

Instead of $\gamma_{1},$ $\gamma_{2}$ using $\gamma_{3}$, $\gamma_{4}$ and
repeating the proof of $\left(  a\right)  $ we get the proof of $(b).$

$(c)$ By Theorem 10 (b), $\lambda_{0}(a)-\lambda_{2}^{-}(a)$ is either real
number or pure imaginary number. Therefore it follows from $(a)$ and $(b)$
that \ when $a^{2}$ changes from $-2.15728123$ to $-2.157281295$ then
$\lambda_{0}(a)-\lambda_{2}^{-}(a)$ moving over $x$ and $y$ axes changes from
real to pure imaginary number. Since $\lambda_{0}(a)-\lambda_{2}^{-}(a)$
continuously depend on $a\in I(2)$, there exists a real number $\left(
ir\right)  ^{2}$ between $-2.157281295$ and $-2.15728123$ such that
$\lambda_{0}(ir)=\lambda_{2}^{-}(ir).$ Thus $ir$ is the degeneration point for
the first periodic eigenvalue defined in Theorem 5. Therefore \ $V_{2}%
=\tfrac{1}{2}\left(  1+r^{2}\right)  ^{1/2}$ is the second critical point and
(63) holds.
\end{proof}

\section{Appendix: Estimations and Calculations}

\textbf{ESTIMATION\ 1. }In this estimation we prove (24) by using (18) and
(19). One of the eigenvalues $\lambda_{1}^{-}(a)$ and $\lambda_{1}^{+}(a)$ is
AN and the other is AD eigenvalue lying in $D_{4}(1)$. First let us consider
the AD eigenvalue $\lambda(a)$ lying in $D_{4}(1)$ by using (18). By Summary
3, $\lambda(a)$ is a simple eigenvalue for small $a$ and $\lambda(a)=1+O(a).$
Then by the general perturbation theory,\ the corresponding eigenfunction is
close to the eigenfunction corresponding to $\lambda(0).$ It means that
$c_{1}=1+o(1)$ as $a\rightarrow0.$ Therefore formula (18) for $k=2$ implies
that $c_{2}=O(a).$ Using it in the first formula in (18) we get $(\lambda
(a)-1)c_{1}=ac_{1}+O(a^{2}).$ Now dividing both sides of the last equality by
$c_{1}$ we get the formula $\lambda(a)=1+a+O(a^{2})$. Instead (18) using (19)
and repeating the above proof we obtain that the AN eigenvalue satisfy the
formula $\lambda(a)=1-a+O(a^{2}).$ Thus (24) is proved. The obtained formulas
with Notation 1 imply that $\lambda_{1}^{-}(a)$ and $\lambda_{1}^{+}(a)$ are
AN and AD eigenvalues and $\lambda_{1}^{+}(a)=\overline{\lambda_{1}^{-}(a)}$ .

By this way we prove the following

\begin{proposition}
The eigenvalues $\lambda_{2n-1}^{-}(a)$ and $\lambda_{2n-1}^{+}(a)$ for small
values of $a\in I(2)$ are nonreal numbers and $\lambda_{2n-1}^{+}%
(a)=\overline{\lambda_{2n-1}^{-}(a)}.$
\end{proposition}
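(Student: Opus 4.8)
The plan is to reduce the claim to a parity statement about the splitting of the two antiperiodic eigenvalues sitting in $D_{4}((2n-1)^{2})$. By Theorem 1$(a)$ this disc contains exactly two eigenvalues of $H_{\pi}(a)$, namely $\{\lambda_{2n-1}^{-}(a),\lambda_{2n-1}^{+}(a)\}$, one from the odd sector (eigenfunctions $\sum_{k}c_{k}\sin(2k-1)x$) and one from the even sector (eigenfunctions $\sum_{k}d_{k}\cos(2k-1)x$); write $\alpha(a)$ and $\beta(a)$ for these two. First I would note that although $\alpha$ and $\beta$ collide at $a=0$, each is individually analytic in $a$ in a full complex neighborhood of $0$: within its own symmetry sector the value $(2n-1)^{2}$ is a \emph{simple} eigenvalue at $a=0$, so the two sector operators are analytic families to which ordinary (nondegenerate) perturbation theory applies. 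Moreover $\alpha,\beta$ are real for real $a$, since for real $a$ the sector operators are self-adjoint; hence $\alpha$ and $\beta$ have real Taylor coefficients at $a=0$.

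The decisive ingredient is the reflection $a\mapsto -a$. Conjugating by the half-period translation $(Uf)(x)=f(x+\tfrac{\pi}{2})$ sends the potential $2a\cos 2x$ to $2a\cos(2x+\pi)=-2a\cos 2x$, so $U^{-1}H(a)U=H(-a)$; also $U$ preserves antiperiodicity and \emph{interchanges} the two sectors, because
\[
\sin\!\big((2k-1)(x+\tfrac{\pi}{2})\big)=(-1)^{k-1}\cos(2k-1)x,\qquad \cos\!\big((2k-1)(x+\tfrac{\pi}{2})\big)=(-1)^{k}\sin(2k-1)x.
\]
Thus $U$ carries the even-sector eigenfunction of $H(-a)$ onto the odd-sector eigenfunction of $H(a)$ with the same eigenvalue, which gives $\alpha(a)=\beta(-a)$. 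I would then record the two consequences: the mean $\mu(a)=\tfrac{1}{2}(\alpha(a)+\beta(a))$ is even in $a$, while the splitting $\delta(a)=\alpha(a)-\beta(a)$ is odd, $\delta(-a)=-\delta(a)$. Combined with real coefficients this forces $\mu(a)=\sum_{j}m_{j}a^{2j}$ and $\delta(a)=\sum_{j}d_{j}a^{2j+1}$ with all $m_{j},d_{j}\in\mathbb{R}$.

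To conclude I would check that $\delta\not\equiv 0$: for small real $a\neq 0$ the antiperiodic eigenvalues are simple and distinct (Summary 2, all gaps open), so $\alpha(a)\neq\beta(a)$. Letting $d_{j_{0}}$ be the first nonzero coefficient, the lowest term of $\delta$ sits at the \emph{odd} exponent $2j_{0}+1$. Evaluating at $a=ic$ with small $c>0$ then yields $\mu(ic)=\sum_{j}m_{j}(-1)^{j}c^{2j}\in\mathbb{R}$ and $\delta(ic)=i\big(d_{j_{0}}(-1)^{j_{0}}c^{2j_{0}+1}+O(c^{2j_{0}+3})\big)$, i.e. $\mu(ic)$ real and $\delta(ic)$ purely imaginary and nonzero. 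Hence $\lambda_{2n-1}^{\pm}(a)=\mu(a)\pm\tfrac{1}{2}\delta(a)$ are, at $a=ic$, of the form $(\text{real})\pm i(\text{real}\neq 0)$; they are therefore nonreal and complex conjugate, and after the labeling $\operatorname{Im}\lambda_{2n-1}^{+}>0$ of Notation 1 this is exactly $\lambda_{2n-1}^{+}(a)=\overline{\lambda_{2n-1}^{-}(a)}$. For $n=1$ one has $\delta(a)=2a+O(a^{2})$, so the argument reproduces Estimation 1.

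The main obstacle is establishing the interchange of the two sectors under $a\mapsto -a$, since it is precisely this that makes $\delta$ odd and hence purely imaginary for imaginary $a$; were $\delta$ even, both eigenvalues would stay real. I expect the trigonometric interchange above and the sector-by-sector analyticity to be the crux, as they dispose of the branch-point difficulty at the collision $a=0$ and render the parity of $\delta$ transparent --- something a direct perturbation expansion does not reveal for $n\geq 2$, where the splitting only opens at order $a^{2n-1}$.
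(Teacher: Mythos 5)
Your proof is correct, and it takes a genuinely different route from the paper's. The paper argues by contradiction: if $\lambda_{2n-1}^{\pm}(a)$ were real for some small $a\in I(2)$, then by Theorem 2 they are real for all small $a$; iterating the three-term recurrences (18)--(19) a total of $2n-2$ times (the same bookkeeping later reused to derive (55)) gives $\lambda(a)=(2n-1)^{2}+G_{n}+S_{n}+R_{n}$, where for pure imaginary $a$ the sum $G_{n}$ of ordinary isolated terms is real, the remainder satisfies $R_{n}=O(a^{2n})$, and the single ``special'' isolated term $S_{n}=a^{2n-2}\big((\lambda(a)-1-a)F(\lambda(a))\big)^{-1}$ --- the only one whose denominator carries the factor $\lambda-1-a$ coming from the first line of the AD recursion --- has nonreal part of order exactly $a^{2n-1}$; taking imaginary parts yields the contradiction $a^{2n-1}=O(a^{2n})$, and Corollary 1 then gives the conjugate pairing. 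You replace all of this with a symmetry argument: the half-period shift $U$ intertwines $H(a)$ and $H(-a)$ and swaps the sine/cosine antiperiodic sectors, so the splitting $\delta(a)=\alpha(a)-\beta(a)$ is an odd analytic function of $a$ with real Taylor coefficients, not identically zero because the antiperiodic eigenvalues are distinct for small real $a>0$ (Summary 2, simplicity/open gaps); hence at $a=ic$ the mean is real and the splitting is purely imaginary and nonzero, which is the claim. Your argument is cleaner and more explanatory: it isolates the structural reason the antiperiodic pairs go complex while the periodic pairs (Proposition 3) stay real --- the same shift maps each \emph{periodic} sector to itself, making that splitting even in $a$ --- and it avoids the term-by-term order counting entirely; the ingredients you invoke (invariance of the two sectors under multiplication by $\cos 2x$, Kato analyticity of the simple sector eigenvalues, reality for real $a$, and Theorem 1(a) to identify your two branches with $\lambda_{2n-1}^{\pm}(a)$) are all standard and legitimate. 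What the paper's route buys in exchange is that its iteration machinery is needed anyway in Section 5 to locate the critical point, and it exhibits the order $a^{2n-1}$ of the splitting explicitly, whereas you only need (and only get) $\delta\not\equiv 0$. One cosmetic remark: by oddness of $\delta$ your $n=1$ formula is in fact $\delta(a)=2a+O(a^{3})$, not merely $O(a^{2})$.
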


\begin{proof}
Let us first consider the AD eigenvalue $\lambda(a)$ lying in $D_{4}(\left(
2n-1\right)  ^{2})$ by using (18). Suppose to the contrary that $\lambda(a)$
is real for some small $a\in I(2).$ Then by Theorem 2 it is real for all small
$a.$ It readily follows from Summary 3 and (18) that $\lambda(a)$\ is a simple
eigenvalue and
\[
\lambda_{2n-1}^{+}(a)=\left(  2n+1\right)  ^{2}+O(a)\text{ }\And\text{ }%
c_{n}=1+O(a),\text{ }c_{m}=O(a),\forall m\neq n
\]
To prove the proposition we iterate $\left(  2n-2\right)  $-times the formula%
\begin{equation}
(\lambda-(2n-1)^{2})c_{n}=ac_{n-1}+ac_{n+1}%
\end{equation}
(see (18) for $k=n)$ as follows. Each time isolate the terms with multiplicand
$c_{n}$ and do not change they and use the formulas
\begin{align}
c_{k}  &  =\frac{ac_{k-1}+ac_{k+1}}{\lambda-(2k-1)^{2}}\And\\
c_{1}  &  =\frac{ac_{2}}{\lambda(a)-1-a}%
\end{align}
for the terms with multiplicand $c_{k}$ \ when $k\neq n.$ After $\left(
2n-2\right)  $ times usages of (67) or (68) in (66) we obtain
\begin{equation}
\lambda(a)=(2n+1)^{2}+G_{n}(\lambda(a))+S_{n}(\lambda(a))+R_{n}(\lambda(a)),
\end{equation}
where the terms $G_{n},$ $S_{n}$ and $R_{n}$ are defined as follows. The term
$R_{n}$ is the sum of terms containing the multiplicands $c_{k}$ for $k\neq n$
that is the sum of all nonisolated terms. Since $c_{k}$ for $k\neq n$ is
$O(a)$ and $R_{n}$ is obtained after $(2n-2)$ times iterations, we have
$R_{n}(a,\lambda)=O(a^{2n}).$

Now consider the isolated terms, that is, the terms with multiplicand $c_{n}$.
Without loss of generality it can be assumed that $c_{n}=1.$ It is clear that
the isolated terms are obtained in first, third, ..., $(2n-3)$th usage. Let
$S_{n}$ be the special isolated term which is obtained by using the formulas
(67) or (68) in the following order $k=n-1,n-2,....,2,1,2,...,n-1.$ Then
$S_{n}$ has the form
\[
S_{n}=\frac{a^{2n-2}}{(\lambda(a)-1-a)F(\lambda(a))},
\]
where $F(\lambda(a))$ is the products of $\lambda(a)-(2s-1)^{2}$ for $s\neq n$
and $F(\lambda(a))\sim1$. Note that $f(a)\sim g(a)$ means that $f(a)=O(g(a))$
and $g(a)=O(f(a))$ as $a\rightarrow0.$ The multiplicand $\lambda(a)-1-a$ in
$S_{n}$ is obtained due to application of (68). It is clear that only one
isolated term, called special isoleted term, contains $\lambda(a)-1-a,$ since
for the other isolated terms we do not apply (68). The sum of other isolated
terms is denoted by $G_{n}(\lambda(a)).$ Thus $G_{n}(\lambda(a))$ is the sum
of fractions whose numerators are $a^{2k}$ for $k=1,2,...,(n-1)$ denominators
are the products of $\lambda(a)-(2s-1)^{2}$ for $s\neq n$ and hence are real
number. Using the formula $(1-a)^{-1}=1+a+a^{2}+....$ and taking into account
that $a^{2n}$ and $a^{2n+1}$ are real and nonreal number respectively, we see
that the nonreal part of the special term $S_{n}$ is of order $a^{2n-1}.$
Using this in (69) and taking into account that \ $G_{n}$ is a real number and
$R_{n}(a,\lambda)=O(a^{2n})$ we get a contradiction $a^{2n-1}=O(a^{2n}).$ Now
the proof follows from Corollary 1.
\end{proof}

\textbf{ESTIMATION\ 2. }Here we prove (25).\textbf{ }By Theorem 1, for the
small value of $a\in I(2),$ the disc $D_{\sqrt{2}\left\vert a\right\vert }(0)$
contains one PN eigenvalue denoted by $\lambda_{0}(a).$ Arguing as in the
proof of (24) we see that $\lambda_{0}(a)=O(a),$ $a_{0}=1+O(a).$ Using it in
(16) and taking into account that $a_{2}=O(a)$ we obtain
\[
\lambda_{0}(a)a_{0}=\frac{2a^{2}a_{0}}{\lambda_{0}(a)-4}+O(a^{3}).
\]
Dividing by $a_{0}$ and using $\lambda_{0}(a)=O(a)$ we get $\lambda
_{0}(a)=O(a^{2})$ and
\begin{equation}
\lambda_{0}(a)=\frac{2a^{2}}{O(a^{2})-4}+O(a^{3})=-\frac{1}{2}a^{2}+O(a^{3}).
\end{equation}

Now we prove the second formula in (25), where $\lambda_{2}^{-}(a)$ and
$\lambda_{2}^{+}(a)$ are the PN and PD eigenvalues lying in $D_{1+\sqrt
{2}\left\vert a\right\vert }(4)$ respectively, by using (16) and (17) and
$a_{1}=1+O(a),$ $b_{1}=1+O(a),$ $\lambda_{1}^{\pm}(a)=4+O(a).$ Using the first
and third equalities of (16) in the second equality of (16) and taking into
account that $a_{3}=O(a)$ we get
\[
\text{ }(\lambda_{2}^{-}(a)-4)a_{1}=\frac{2a^{2}}{\lambda_{2}^{-}(a)}%
a_{1}+\frac{a^{2}a_{1}}{(\lambda_{2}^{-}(a)-16)}+O(a^{3})
\]
Dividing by $a_{1}$ and and then iterating it we obtain
\[
\text{ }\lambda_{2}^{-}(a)=4+\frac{2a^{2}}{\lambda_{2}^{-}(a)}+\frac{a^{2}%
}{(\lambda_{2}^{-}(a)-16)}+O(a^{3})=4+\frac{5}{12}a^{2}+O(a^{3}).
\]
Now we prove the third formula in (25). Using the second formula of (17) for
$k=2$ in the first formula of (17) and taking into account that $b_{3}=O(a)$
we get
\[
(\lambda-4)b_{1}=\frac{a^{2}}{\lambda-16}b_{1}+O(a^{3})
\]
Dividing by $b_{1}$ we obtain
\[
\text{ }\lambda=4+\frac{a^{2}}{\lambda-16}+O(a^{3})=4-\frac{1}{12}%
a^{2}+O(a^{3}).
\]
Thus the formulas in (25) are proved.

To consider the eigenvalues $\lambda_{2n}^{-}(a)$ and $\lambda_{2n}^{+}(a)$
for $n>2$ we use the formulas
\begin{equation}
(\lambda-4-\frac{2a^{2}}{\lambda})a_{1}=aa_{2},\text{ }(\lambda-(2k)^{2}%
)a_{k}=aa_{k-1}+aa_{k+1},
\end{equation}%
\begin{equation}
(\lambda-4)b_{1}=ab_{2},\text{ }(\lambda-(2k)^{2})b_{k},=ab_{k-1}+ab_{k+1},
\end{equation}
where the first formula in (71) is obtained from the first and second formulas
of (16).

\begin{proposition}
The eigenvalues $\lambda_{0}(a)$ and $\lambda_{2n}^{\pm}(a),$ where
$n=1,2,...,$ for all small $a\in I(2)$ are real numbers.
\end{proposition}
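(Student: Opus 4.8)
The plan is to prove that each of the eigenvalues $\lambda_{0}(a)$ and $\lambda_{2n}^{\pm}(a)$ is, for small $a$, an analytic function of $a^{2}$ with real Taylor coefficients; reality then comes for free, because $a=ic\in I(2)$ gives $a^{2}=-c^{2}\in\mathbb{R}$, and a power series with real coefficients takes real values at real arguments. In this way the exact reality claimed here is deduced from the reality in the self-adjoint case $a\in\mathbb{R}$ rather than only to leading order as in (25).

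First I would record that the eigenvalues in question are simple for small $a\in I(2)$: for $\lambda_{0}(a)$ and $\lambda_{2}^{\pm}(a)$ this is Theorem 1$(c)$ (they are the simple PN, PN, PD eigenvalues in $D_{6}(3)$), and for $\lambda_{2n}^{\pm}(a)$ with $n\geq2$ it is Theorem 1$(b)$ together with Summary 3$(c)$, which separate them into a simple PN eigenvalue $\lambda_{2n}^{-}(a)$ and a simple PD eigenvalue $\lambda_{2n}^{+}(a)$ lying in $D_{4}((2n)^{2})$. Simplicity is what makes the corresponding root of the characteristic equation a simple root, i.e. a point where the $\lambda$-derivative of the characteristic function is nonzero.

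Second, for each eigenvalue I would iterate its three-term recurrence exactly as in the derivation of (55): one iterates (71) for the PN eigenvalues and (72) for the PD eigenvalues, each time isolating the leading unknown ($a_{1}$ or $b_{1}$) and substituting the tail. The decisive structural fact is that every substitution couples two off-diagonal factors $a\cdot a=a^{2}$, so the resulting characteristic equation has the form $\Phi_{n}(\lambda,a^{2})=0$, a function of $a^{2}$ and $\lambda$ alone, analytic in both, with a remainder that tends to zero exactly as $R_{m}\to0$ does in (78). Equivalently, and more quickly, one may use the unitary equivalence $H_{0}(a)\cong H_{0}(-a)$ induced by the half-period shift $x\mapsto x+\pi/2$, which sends $\cos2x$ to $-\cos2x$ while carrying cosine (PN) series to cosine series and sine (PD) series to sine series; this forces each simple PN or PD eigenvalue to satisfy $\lambda(a)=\lambda(-a)$, hence to be even in $a$.

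Finally, simplicity gives $\partial_{\lambda}\Phi_{n}\neq0$ at the root, so the implicit function theorem applied to $\Phi_{n}(\lambda,z)=0$ with $z=a^{2}$ yields an analytic branch $\lambda=\lambda(z)$ near $z=0$. For small real $a$ the operator $H_{0}(a)$ is self-adjoint, so $\lambda$ is real on $z=a^{2}\in[0,\varepsilon)$; an analytic function that is real on a real segment emanating from $0$ has real Taylor coefficients and hence is real for every real $z\in(-\varepsilon,\varepsilon)$. Choosing $a=ic$ with $c$ small gives $z=-c^{2}\in(-\varepsilon,0)$, so $\lambda_{0}(a)$ and $\lambda_{2n}^{\pm}(a)$ are real, as claimed. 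I expect the main obstacle to be the bookkeeping in the second step, namely verifying for every $n$ that the iterated recurrence genuinely collapses to a function of $a^{2}$ and that the continued-fraction remainder vanishes in the limit; this is routine but notation-heavy, and is precisely where the pairing $a\cdot a=a^{2}$ must be tracked. Invoking the shift symmetry instead sidesteps the computation entirely and is the cleaner route to the same conclusion.
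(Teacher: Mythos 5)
Your proposal is correct, but it takes a genuinely different route from the paper's. The paper argues by contradiction: for $\lambda_{0}(a),\lambda_{2}^{\pm}(a)$ it combines the asymptotic formulas (25) with the fact that $D_{6}(3)$ contains exactly three periodic eigenvalues, so a nonreal one would force a fourth (its conjugate); for $\lambda_{2n}^{\pm}(a)$, $n>1$, it assumes nonreality, propagates it to all small $a$ by Theorem 2 so that $\lambda_{2n}^{+}(a)=\overline{\lambda_{2n}^{-}(a)}$, iterates the PN recurrence (71) and the PD recurrence (72) $2n$ times each, conjugates one expansion and compares, and extracts the contradiction $2a^{2}\widetilde{S}_{n-1}(\lambda_{2n}^{+})=O(a^{2n+2})$ (impossible, since $\widetilde{S}_{n-1}\sim a^{2n-2}$) from the one structural difference between (71) and (72), namely the extra term $-2a^{2}/\lambda$ in the first equation. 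You instead deduce exact reality by analytic continuation from the self-adjoint case: the parity sectors (cosine series for PN, sine series for PD) turn the double unperturbed eigenvalue $(2n)^{2}$ into a simple eigenvalue of each sector restriction, the half-period shift $x\mapsto x+\pi/2$ gives $\lambda(a)=\lambda(-a)$, hence an analytic branch in $z=a^{2}$ that is real for $z\geq0$ and therefore, having real Taylor coefficients, real for small $z<0$. Your route is cleaner and yields more (analyticity in $a^{2}$; no need for (25), Theorem 2, or Corollary 1 in this step), and the shift-symmetry variant avoids the convergence bookkeeping entirely; the paper's route, by contrast, stays inside the iteration machinery it develops anyway for the numerical estimates of Section 5 and never has to invoke Kato-type analytic perturbation theory or the unitary shift equivalence. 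One small imprecision on your side: what makes $\partial_{\lambda}\Phi_{n}\neq0$ (equivalently, what licenses analytic perturbation theory) at $z=0$ is simplicity of the eigenvalue \emph{within its parity sector at} $a=0$ — immediate, since the unperturbed sector eigenvalues $(2k)^{2}$ are distinct — not the simplicity for $a\in I(2)$ from Theorem 1 that you cite; with that substitution your argument is complete.
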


\begin{proof}
It follows from (25) that $\lambda_{0}(a)$, $\lambda_{2}^{-}(a)$ and
$\lambda_{2}^{+}(a)$ are real. Indeed if at least one of them is nonreal then
by (5) its conjugate is also periodic eigenvalue lying in $D_{6}(3)$ and by
(25) differ from the other eigenvalues. It is controdiction, since by Theorem
1, $D_{6}(3)$ contains $3$ periodic eigenvalues.

Now consider $\lambda_{2n}^{\pm}(a)$ for $n>1.$ One of $\lambda_{2n}^{-}(a)$
and $\lambda_{2n}^{+}(a)$ satisfies (71) and the other satisfies (72). For
simplicity of notation suppose that $\lambda_{2n}^{-}(a)$ satisfies (71). To
prove the proposition we iterate $2n$-times the formulas (71) and (72) in the
same manner as were iterated the formula (66) in the proof of Proposition 2 .
Here to iterate (71) we also each time isolate the terms with multiplicand
$a_{n}$ (isolated terms) and do not change they and use the formulas (71) for
the term with multiplicand $a_{k}$ for $k\neq n.$ Thus iterating the second
formula in (71) (for $k=2n$) $2n$ times we see that the sum of nonisolated
term is $O(a^{2n+2})$ and get the equality%
\begin{equation}
\lambda_{2n}^{-}(a)=(2n)^{2}+G_{n}(\lambda_{2n}^{-}(a))+S_{n-1}(\lambda
_{2n}^{-}(a))+S_{n}(\lambda_{2n}^{-})+O(a^{2n+2})
\end{equation}
Here $G_{n}(\lambda_{2n}^{-})$ is the sum of isolated terms whose denominators
does not contain the multiplicand $\lambda_{2n}^{-}-4-\frac{2a^{2}}%
{\lambda_{2n}^{-}}.$ $S_{n-1}(\lambda_{2n}^{-})$ and $S_{n}(\lambda_{2n}^{-})$
are the sum of isolated term (special isolated terms) whose denominator
contain the multiplicand $\lambda_{2n}^{-}-4-\frac{2a^{2}}{\lambda_{2n}^{-}}$
and are obtained in $\left(  2n-3\right)  $-th and $\left(  2n-1\right)  $-th
iterations respectively. It is clear that $S_{n-1}(\lambda_{2n}^{-})\sim
a^{2n-2}$ and $S_{n}(\lambda_{2n}^{-})\sim a^{2n}.$

Similarly iterating the formulas (72) $2n$ times and arguing as above we get%
\begin{equation}
\lambda_{2n}^{+}(a)=(2n)^{2}+\widetilde{G}_{n}(\lambda_{2n}^{+})+\widetilde
{S}_{n-1}(\lambda_{2n}^{+})+\widetilde{S}_{n}(\lambda_{2n}^{+})+O(a^{2n+2})
\end{equation}
Here $\widetilde{G}_{n}(\lambda_{2n}^{-})$ is the sum of isolated terms whose
denominators does not contain the multiplicand $\lambda_{2n}^{-}-4.$
$\widetilde{S}_{n-1}(\lambda_{2n}^{-})$ and $\widetilde{S}_{n}(\lambda
_{2n}^{-})$ are the sum of isolated terms whose denominator contain the
multiplicand $\lambda_{2n}^{-}-4$ and are obtained in $\left(  2n-3\right)
$th and $\left(  2n-1\right)  $th iterations respectively.

Now suppose to the contrary that $\lambda_{2n}^{-}(a)$ is nonreal for some
small $a.$ Then by Theorem 2 it is nonreal for all small $a$ and $\lambda
_{2n}^{+}(a)=\overline{\lambda_{2n}^{-}(a)}.$ Then using the formulas (73) and
(74) and taking into account that $\overline{G_{n}(\lambda_{2n}^{-})}%
=G_{n}(\lambda_{2n}^{+}),$ $\overline{S_{n-1}(\lambda_{2n}^{-})}%
=S_{n-1}(\lambda_{2n}^{+}),$ $\overline{S_{n}(\lambda_{2n}^{-})}=S_{n}%
(\lambda_{2n}^{+})$ we get%
\begin{equation}
G_{n}(\lambda_{2n}^{+})+S_{n-1}(\lambda_{2n}^{+})+S_{n}(\lambda_{2n}%
^{+})=\widetilde{G}_{n}(\lambda_{2n}^{+})+\widetilde{S}_{n-1}(\lambda_{2n}%
^{+})+\widetilde{S}_{n}(\lambda_{2n}^{+})+O(a^{2n+2}).
\end{equation}
Now using the definitions of $G_{n}$ and $\widetilde{G}_{n}$ and the equality
$(1-\alpha)^{-1}=1+\alpha+O(\alpha^{2})$ we obtain%
\begin{equation}
G_{n}(\lambda_{2n}^{+})=\widetilde{G}_{n}(\lambda_{2n}^{+}),\text{ }%
S_{n}(\lambda_{2n}^{+})=\widetilde{S}_{n}(\lambda_{2n}^{+})+O(a^{2n+2}).
\end{equation}
Let us consider $S_{n-1}(\lambda_{2n}^{+}).$ It is cleat that both
$S_{n-1}(\lambda_{2n}^{+})$ and $\widetilde{S}_{n-1}(\lambda_{2n}^{+})$
contain only one term and $S_{n-1}(\lambda_{2n}^{+})$ can be obtained from
$\widetilde{S}_{n-1}(\lambda_{2n}^{+})$ by replacing the multiplicand $\left(
\lambda_{2n}^{+}-4\right)  ^{-1}$ with $\left(  \lambda_{2n}^{+}%
-4-\frac{2a^{2}}{\lambda_{2n}^{-}}\right)  ^{-1}.$ Therefore we have
\begin{equation}
S_{n-1}(\lambda_{2n}^{+})=\widetilde{S}_{n-1}(\lambda_{2n}^{+})+\widetilde
{S}_{n-1}(\lambda_{2n}^{+})\left(  \frac{2a^{2}}{\lambda_{2n}^{-}\left(
\lambda_{2n}^{-}-4\right)  ^{2}}\right)  +O(a^{2n+2})
\end{equation}
Now using (77) and (76) in (75) we get a contradiction $2a^{2}\widetilde
{S}_{n-1}(\lambda_{2n}^{+})=O(a^{2n+2})$
\end{proof}

\textbf{ESTIMATION\ 3. }Here we estimate $R_{m}(a,\lambda)$ defined in (48)
and consider the convergence of the series in (55) for $\left\vert
a\right\vert <2$ and $\left\vert \lambda\right\vert \leq9.$ First let us
consider $R_{m}(a,\lambda).$ Since the indices $n_{1},n_{2},...n_{2m}$ take
only two values the number of the summands of $R_{m}(a,\lambda)$ is not more
than $4^{m}.$ On the other hand, the largest (by absolute value) summand is
not greater than $\left\vert a^{2m+2}\left(  \lambda-16\right)  ^{-m-1}%
(\lambda-36)^{-m}\right\vert $, since the eigenfunction (16) can be normalized
so that $\left\vert a_{s}\right\vert \leq1$ for all $s.$ Therefore we have \
\begin{equation}
\left\vert R_{m}(a,\lambda)\right\vert <\left\vert \frac{a^{2}}{\lambda
-16}\right\vert \left\vert \frac{4a^{2}}{\left(  \lambda-16\right)
(\lambda-36)}\right\vert ^{m}<\frac{4}{7}\left(  \frac{16}{189}\right)  ^{m}%
\end{equation}

Now we consider $A_{k}(a,\lambda)$ defined in (52) in a similar way. Since the
summation in $A_{k}(a,\lambda)$ is taken over $n_{1},n_{2},...n_{2k-3}$ the
number of the summands in $A_{k}(a,\lambda)$ is not more than $2^{2k-3}.$ It
is clear that, the largest (by absolute value) summand is not greater than

$\left\vert a^{2k+2}\left(  \lambda-16\right)  ^{-k-1}(\lambda-36)^{-k}%
\right\vert .$ Therefore we have
\begin{equation}
\left\vert A_{k}(a,\lambda)\right\vert \leq\left\vert \frac{a^{2}}{8\left(
\lambda-16\right)  }\right\vert \left\vert \frac{4a^{2}}{\left(
\lambda-16\right)  (\lambda-36)}\right\vert ^{k}<\frac{1}{14}\left(  \frac
{16}{189}\right)  ^{k},
\end{equation}%
\begin{equation}
\left\vert A_{k}^{\prime}(a,\lambda)\right\vert \leq\left\vert \frac{\left(
2k+1\right)  a^{2}}{8\left(  \lambda-16\right)  ^{2}}\right\vert \left\vert
\frac{4a^{2}}{\left(  \lambda-16\right)  (\lambda-36)}\right\vert ^{k}%
<\frac{2k+1}{98}\left(  \frac{16}{189}\right)  ^{k}.
\end{equation}%
\begin{equation}
\left\vert A_{k}^{\prime\prime}(a,\lambda)\right\vert \leq\left\vert
\frac{\left(  2k+2\right)  \left(  2k+1\right)  a^{2}}{8\left(  \lambda
-16\right)  ^{3}}\right\vert \left\vert \frac{4a^{2}}{\left(  \lambda
-16\right)  (\lambda-36)}\right\vert ^{k}<\frac{4k^{2}+6k+2}{686}\left(
\frac{16}{189}\right)  ^{k}.
\end{equation}

\begin{remark}
From (79) and (80) immediately follows that if \ $\left\vert a\right\vert
\leq2$ and $\left\vert \lambda\right\vert \leq9$, then the series in (55)
converges uniformly to some analytic function on the disc $\left\{  \lambda
\in\mathbb{C}:\left\vert \lambda\right\vert \leq9\right\}  .$ Moreover using
(79)-(81) by direct calculations we get%
\begin{equation}%
%TCIMACRO{\tsum \limits_{k\geq1}}%
%BeginExpansion
{\textstyle\sum\limits_{k\geq1}}
%EndExpansion
\left\vert A_{k}(a,\lambda)\right\vert <\frac{1}{100},\text{ }%
%TCIMACRO{\tsum \limits_{k\geq1}}%
%BeginExpansion
{\textstyle\sum\limits_{k\geq1}}
%EndExpansion
\left\vert A_{k}^{\prime}(a,\lambda)\right\vert <\frac{1}{200},\text{ }%
%TCIMACRO{\tsum \limits_{k\geq1}}%
%BeginExpansion
{\textstyle\sum\limits_{k\geq1}}
%EndExpansion
\left\vert A_{k}^{\prime\prime}(a,\lambda)\right\vert <\frac{1}{300},
\end{equation}

\end{remark}

\textbf{ESTIMATION\ 4. }Here we estimate $A_{k}(a,\lambda),$ in detail, when
\begin{equation}
\lambda\in\left(  \gamma_{1}\cup\gamma_{2}\cup\gamma_{3}\cup\gamma_{4}\right)
\And\text{ }2.156<-a^{2}<2.158.\text{ }%
\end{equation}
It is clear that if (83) holds then
\begin{equation}
\frac{2.15}{\left\vert 2-\left(  2s\right)  ^{2}\right\vert }<\frac{-a^{2}%
}{\left\vert \lambda-\left(  2s\right)  ^{2}\right\vert }<\frac{2.16}%
{\left\vert 2.1-\left(  2s\right)  ^{2}\right\vert }%
\end{equation}
for $s=2,3,....$ . It with (79) implies that
\[
\text{ }\left\vert A_{k}(a,\lambda)\right\vert <\left\vert \frac{\left(
2.16\right)  }{8(2.1-16)}\right\vert \left\vert \frac{4\left(  2.16\right)
}{(2.1-16)(2.1-36)}\right\vert ^{k}.
\]
Therefore, using the geometric series formula by direct calculations in SWP we
obtain%
\begin{equation}%
%TCIMACRO{\tsum \limits_{k>4}}%
%BeginExpansion
{\textstyle\sum\limits_{k>4}}
%EndExpansion
\left\vert A_{k}(a,\lambda)\right\vert <4.101\times10^{-11}.
\end{equation}

Now we estimate $A_{3}(a,\lambda)$ and $A_{4}(a,\lambda).$ To easify the
application of (84) we redonete $A_{k}(a,\lambda)$ by $A_{k}(a^{2},\lambda).$
It follows from (52) and (53) that
\begin{equation}
A_{3}(a^{2},\lambda)=\sum_{i,j,s}\frac{(\lambda-16)^{-2}\left(  \lambda
-36\right)  ^{-2}a^{8}}{(\lambda-(6+2i)^{2})(\lambda-(6+2i+2j)^{2}%
)(\lambda-(6+2i+2j+2s)^{2})},
\end{equation}
where the summation is taken under conditions $3+i+j>1$ and $i+j+s=\pm1.$
Therefore $A_{3}(a^{2},\lambda)$ consist of $5$ summand and $2$ of them are
the same. Namely,%
\begin{equation}
\text{ }A_{3}(a^{2},\lambda)=\frac{a^{8}}{(\lambda-16)^{4}(\lambda-36)^{3}%
}+\frac{a^{8}}{(\lambda-16)^{2}(\lambda-36)^{3}(\lambda-64)^{2}}+\nonumber
\end{equation}%
\[
\frac{2a^{8}}{(\lambda-16)^{3}(\lambda-36)^{3}(\lambda-64)}+\frac{a^{8}%
}{(\lambda-16)^{2}(\lambda-36)^{2}(\lambda-64)^{2}(\lambda-100)}.
\]
Using (84) we see that
\begin{equation}
\text{ }\left\vert A_{3}(a^{2},\lambda)\right\vert <-A_{3}%
(2.16,2.1)<2.2707\times10^{-8}%
\end{equation}

It remains to estimate $A_{4}(a^{2},\lambda)$. Let $C_{4}(a^{2},\lambda)$ and
$D_{4}(a^{2},\lambda)$ be respectively the sum of the terms of $A_{4}%
(a^{2},\lambda)$ subject to the constraints $n_{1}=-1,n_{2}=1$ and
$n_{1}=1,n_{2}=-1.$ In (52) replacing $n_{3},n_{4}$ and $n_{5}$ respectively
by $i,j$ and $s$ one can readily see that
\begin{equation}
C_{4}(a^{2},\lambda)=\frac{a^{2}}{(\lambda-16)(\lambda-36)}A_{3}(a^{2}%
,\lambda),\text{ }D_{4}(a^{2},\lambda)=\frac{a^{2}}{(\lambda-64)(\lambda
-36)}A_{3}(a^{2},\lambda).
\end{equation}
\ Now let us consider the remaining terms of $A_{4}(a^{2},\lambda),$ that is,
the terms of

$A_{4}(a^{2},\lambda)-C_{4}(a^{2},\lambda)-D_{4}(a^{2},\lambda)=:E_{4}%
(a^{2},\lambda).$ Using the definition of $C_{4}(a^{2},\lambda)$ and
$D_{4}(a^{2},\lambda)$ and taking into account (53) we see that the terms of
$E_{4}(a^{2},\lambda)$ are obtained subject to the constraint $n_{1}=1$,
$n_{2}=1$ and$\ n_{3}+n_{4}+n_{5}$ is either $-1$ or $-3$. Therefore the
triple $(n_{3},n_{4},n_{5})$ is either $(1,-1,-1)$ or $(-1,1,-1)$ or
$(-1,-1,1)$ or $(-1,-1,-1).$ Thus
\begin{equation}
E_{4}(a^{2},\lambda)=\frac{a^{10}}{(\lambda-16)^{2}\left(  \lambda-36\right)
^{2}\left(  \lambda-64\right)  ^{2}\left(  \lambda-100\right)  ^{2}\left(
\lambda-144\right)  }+
\end{equation}%
\[
\frac{a^{10}}{(\lambda-16)^{2}\left(  \lambda-36\right)  ^{2}\left(
\lambda-64\right)  ^{3}\left(  \lambda-100\right)  ^{2}}+\frac{a^{10}%
}{(\lambda-16)^{2}\left(  \lambda-36\right)  ^{3}\left(  \lambda-64\right)
^{3}\left(  \lambda-100\right)  }+
\]%
\[
\frac{a^{10}}{(\lambda-16)^{3}\left(  \lambda-36\right)  ^{3}\left(
\lambda-64\right)  ^{2}\left(  \lambda-100\right)  }.
\]
and by (88) we have
\begin{equation}
A_{3}(a^{2},\lambda)+A_{4}(a^{2},\lambda)=E_{4}(a^{2},\lambda)+\left(
1+F(a^{2},\lambda)\right)  A_{3}(a^{2},\lambda),
\end{equation}
where $F(a^{2},\lambda)=\dfrac{a^{2}}{(\lambda-16)(\lambda-36)}+\dfrac{a^{2}%
}{(\lambda-16)(\lambda-36)}$

Since the summands in $E_{4}(-2,16,2.1)$ are positive number we have%
\begin{equation}
\left\vert E_{4}(a^{2},\lambda)\right\vert <E_{4}(-2,16,2.1).
\end{equation}
Using the first inequality of (84) and taking into account that the summands
in $F(-2.15,2)$ are negative numbers and $\left\vert F(a^{2},\lambda
)\right\vert <1$ we obtain
\begin{equation}
\left\vert 1+F(a^{2},\lambda)\right\vert <1+F(-2.15,2).
\end{equation}
Thus using (90)-(92) we conclude that
\begin{equation}
\left\vert A_{3}(a^{2},\lambda)+A_{4}(a^{2},\lambda)\right\vert <E_{4}%
(-2,16,2.1)+\left(  1+F(-2,15,2)\right)  \left\vert A_{3}%
(-2,16,2.1)\right\vert .
\end{equation}
Calculating the right-hand side of (93) by SWP we get%
\[
\left\vert A_{3}(a^{2},\lambda)+A_{4}(a^{2},\lambda)\right\vert
<1.600\,9\times10^{-12}+(0.991)2.2707\times10^{-8}<2.251\times10^{-8}%
\]
This with (85) implies that
\begin{equation}
\left\vert
%TCIMACRO{\tsum \limits_{k\geq3}}%
%BeginExpansion
{\textstyle\sum\limits_{k\geq3}}
%EndExpansion
\lambda A_{k}(a,\lambda)\right\vert <2.1\left(  4.101\times10^{-11}%
+2.251\times10^{-8}\right)  <4.\,\allowbreak735\,7\times10^{-8}.
\end{equation}

\end{document}